\DeclareMathOperator{\lcm}{lcm}
\newtheorem{thrm}{Theorem}[section]
\newtheorem{lem}[thrm]{Lemma}
\newtheorem{cor}[thrm]{Corollary}
\newcommand{\labeq}[1]{\label{eq:#1}}
\newcommand{\refeq}[1]{(\ref{eq:#1})}
\newcommand{\labt}[1]{\label{thm:#1}}
\newcommand{\reft}[1]{Theorem~\ref{thm:#1}}
\newcommand{\labl}[1]{\label{lemma:#1}}
\newcommand{\refl}[1]{Lemma~\ref{lemma:#1}}
\newcommand{\labd}[1]{\label{definition:#1}}
\newcommand{\refd}[1]{definition~\ref{definition:#1}}
\newcommand{\labc}[1]{\label{coro:#1}}
\newcommand{\refc}[1]{Corollary~\ref{coro:#1}}
\newcommand{\labj}[1]{\label{conj:#1}}
\newcommand{\refj}[1]{Conjecture~\ref{conj:#1}}
\newcommand{\labs}[1]{\label{sec:#1}}
\newcommand{\refs}[1]{Section~\ref{sec:#1}}
\newcommand{\labf}[1]{\label{fig:#1}}
\newcommand{\reff}[1]{Figure~\ref{fig:#1}}
\newcommand{\Rmnum}[1]{\expandafter\@slowromancap\romannumeral #1@}
\newcommand{\dimh}[1]{\hbox{dim$_{\hbox{H}}$}\left( #1\right)}
\newcommand{\q}[1]{q_1 \cdots q_{ #1 }}
\newcommand{\NN}{\mathbb{N}_2^{\mathbb{N}}}
\newcommand{\measNNz}{\mathscr{M} \left( \mathbb{N}_0^{\mathbb{N}} \right)}
\newcommand{\ppq}{\psi_{P,Q}}
\newcommand{\ppqewx}{\left(\ppq \circ \eta\right)(W,X)}
\newcommand{\ppqedx}{\left(\ppq \circ \eta\right)(D,X)}
\newcommand{\al}{\alpha}
\newcommand{\be}{\beta}
\newcommand{\floor}[1]{\left\lfloor #1 \right\rfloor}
\newcommand{\NQ}{\mathscr{N}(Q)}
\newcommand{\N}[1]{\mathscr{N}( #1 )}
\newcommand{\Nk}[2]{\mathscr{N}_{#2}( #1 )} 
\newcommand{\Nkt}[3]{\mathscr{N}_{#2}^{ \Rmnum{#3} }( #1 )} 
\newcommand{\NAPIQ}{\mathscr{N}^{I}(Q)}
\newcommand{\NAPIIQ}{\mathscr{N}^{II}(Q)}
\newcommand{\NAPI}[1]{\mathscr{N}^{I}( #1 )}
\newcommand{\NAPII}[1]{\mathscr{N}^{II}( #1 )}
\newcommand{\NAPIk}[2]{\mathscr{N}^{I}_{ #2 }( #1 )}
\newcommand{\NAPIIk}[2]{\mathscr{N}^{II}_{ #2 }( #1 )}
\newcommand{\NAPIkm}[3]{\mathscr{N}^{I}_{{ #2 }, { #3 } }( #1 )}
\newcommand{\NAPIIkm}[3]{\mathscr{N}^{II}_{{ #2 }, { #3 }}( #1 )}
\newcommand{\NAPIkmr}[4]{\mathscr{N}^{I}_{{ #2 }, { #3 }, { #4 } }( #1 )}
\newcommand{\NAPIIkmr}[4]{\mathscr{N}^{II}_{{ #2 }, { #3 }, { #4 } }( #1 )}
\newcommand{\DNQ}{\mathscr{DN}(Q)}
\newcommand{\DN}[1]{\mathscr{DN}( #1 )} 
\newcommand{\RNQ}{\mathscr{RN}(Q)}
\newcommand{\RNk}[2]{\mathscr{RN}_{#2}( #1 )} 
\newcommand{\RNAPIQ}{\mathscr{RN}^{I}(Q)}
\newcommand{\RNAPIIQ}{\mathscr{RN}^{II}(Q)}
\newcommand{\Nc}[1]{\mathbb{N}_{ #1 }}
\newcommand{\UPAmri}{\Upsilon_{Q,\mathscr{A}_{m,r}}^{-1}}
\newcommand{\LAAmr}{\Lambda_{\mathscr{A}_{m,r}}(Q)}
\newcommand{\qnk}{Q_n^{(k)}}
\newcommand{\pnk}{P_n^{(k)}}
\newcommand{\mr}{\mathscr{MR}}
\newcommand{\mrmr}{(m,r) \in \mathscr{MR}}
\begin{document}
\date{Revised \today}

\markboth{B. Li and B. Mance}{Number theoretic applications of a class of Cantor series fractal functions, II}

%
\catchline{}{}{}{}{}
%

\title{Number theoretic applications of a class of Cantor series fractal functions, II}

\author{Brian Li}
\address{Department of Mathematics, The Ohio State Universtiy, 231 West 18th Avenue\\
Columbus, Ohio 43210-1174, USA\\
\email{brianua13@yahoo.com} }

\author{Bill Mance}
\address{Department of Mathematics,  University of North Texas, General Academics Building 435\\
 1155 Union Circle \#311430, Denton, Texas 76203-5017, USA\\
\email{mance@unt.edu}  }

\maketitle


\begin{abstract}
It is well known that all numbers that are normal of order $k$ in base $b$ are also normal of all orders less than $k$.   Another basic  fact is that every real number is normal in base $b$ if and only if it is simply normal in base $b^k$ for all $k$.  
This may be interpreted to mean that a number is normal in base $b$ if and only if all blocks of digits occur with the desired relative frequency along every infinite arithmetic progression.  We reinterpret these theorems for the $Q$-Cantor series expansions and show that they are no longer true in a particularly strong way.  The main theoretical result of this paper will be to reduce the problem of constructing normal numbers with certain pathological properties to the problem of solving a system of Diophantine relations. 

\end{abstract}



\keywords{Cantor series, Normal numbers, Uniformly distributed sequences}
\ccode{Mathematics Subject Classification 2010: 11K16, 11A63}


\section{Introduction}

\subsection{Motivation}


We recall the modern definition of a normal number.

\begin{definition}\labd{normal}
A real number $x$ is {\it normal of order $k$ in base $b$} if all blocks of digits of length $k$ in base $b$ occur with relative frequency $b^{-k}$ in the $b$-ary expansion of $x$.  $x$ is {\it simply normal in base $b$} if it is normal of order $1$ in base $b$ and $x$ is {\it normal in base $b$} if it is normal of order $k$ in base $b$ for all natural numbers $k$.
\end{definition}

It is well known that \'{E}. Borel \cite{BorelNormal} was the first mathematician to study normal numbers.  In 1909 he gave the following definition.

\begin{definition}\labd{normalBorel}
A real number $x$ is {\it normal in base $b$} if each of the numbers $x, bx, b^2x,\cdots$ is simply normal (in the sense of \refd{normal}), in each of the bases $b, b^2, b^3, \cdots$ 
\end{definition}

\'{E}. Borel proved that Lebesgue almost every real number is normal, in the sense of \refd{normalBorel}, in all bases.  In 1940, S. S. Pillai \cite{Pillai2} simplified \refd{normalBorel} by proving that
\begin{thrm}\labt{Pillai}
For $b \geq 2$, a real number $x$ is normal in base $b$ if and only if it is simply normal in each of the bases $b, b^2, b^3, \cdots$.
\end{thrm}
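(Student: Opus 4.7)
The forward direction of \reft{Pillai} is immediate, since \refd{normalBorel} explicitly requires $x$ itself to be simply normal in each base $b^j$. My plan therefore focuses on the converse: assuming $x$ is simply normal in every base $b^j$, I would show that every $b^i x$ is simply normal in every base $b^j$ as well.

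The intermediate target is the stronger statement that $x$ is normal of every order $k$ in base $b$ in the sense of \refd{normal}. Borel normality would then follow automatically: the base-$b$ expansion of $b^i x$ is the left-shift by $i$ of that of $x$ (modulo the integer part), and such a shift preserves asymptotic block frequencies, so each $b^i x$ is likewise normal of every order in base $b$; a base-$b^j$ digit of $b^i x$ is simply a length-$j$ aligned base-$b$ block, and each such block occurs with asymptotic frequency $b^{-j}$, which is precisely simple normality of $b^i x$ in base $b^j$.

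To establish normality of order $k$ in base $b$, I would fix a block $B = d_1 \cdots d_k$, write the base-$b$ expansion of the fractional part of $x$ as $\omega_1 \omega_2 \omega_3 \cdots$, and set $A_n(B) = \#\{1 \leq i \leq n : \omega_i \cdots \omega_{i+k-1} = B\}$. For $N \gg k$, I would partition positions into aligned length-$N$ windows. For each length-$N$ base-$b$ string $D$, let $\nu(B;D)$ count the occurrences of $B$ strictly inside $D$; a direct enumeration shows $\sum_D \nu(B;D) = (N-k+1) b^{N-k}$. Simple normality of $x$ in base $b^N$ then gives
\[
A_{MN}(B) = (N-k+1) b^{-k} M + O(kM) + o_N(M) \qquad (M \to \infty),
\]
where the $O(kM)$ term absorbs all $B$-occurrences straddling a window boundary and $o_N(M)/M \to 0$ for each fixed $N$. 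Dividing by $MN$, extending to general $n$ via $n = MN + r$ with $0 \leq r < N$, and letting $M \to \infty$ before $N \to \infty$ forces $A_n(B)/n \to b^{-k}$.

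The main technical obstacle I anticipate is the careful interleaving of these two limits: one must verify that the normality error in base $b^N$ genuinely vanishes as $M \to \infty$ for fixed $N$, which uses that there are only $b^N$ possible windows $D$ and each contributes $\nu(B;D) \leq N - k + 1$, so the total deviation is $b^N \cdot o(M) = o_N(M)$. Once the identity for $\sum_D \nu(B;D)$ and the window decomposition are in place, the rest is a routine averaging argument in which $N$ can be taken large enough to absorb both the alignment loss $(k-1)/N$ from the boundary term and the residual normality error.
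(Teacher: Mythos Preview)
The paper does not give its own proof of \reft{Pillai}; it is quoted as Pillai's 1940 result, with a footnote pointing to Maxfield's shorter proof, so there is nothing in the paper to compare your argument against directly.

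Your windowing argument for the intermediate target (that $x$ is normal of every order $k$ in the sense of \refd{normal}) is the standard one and is correct as sketched, including the handling of the double limit. The gap is in the sentence ``Borel normality would then follow automatically.'' You argue that each $b^ix$ is normal of every order in base $b$, and then assert that a base-$b^j$ digit of $b^ix$, being an \emph{aligned} length-$j$ block, occurs with frequency $b^{-j}$. But normality of order $j$ only gives the frequency $b^{-j}$ over \emph{all} starting positions; it says nothing about the frequency along the arithmetic progression $1,\,j+1,\,2j+1,\ldots$ of aligned positions. Passing from unrestricted block frequencies to frequencies along such progressions is exactly the nontrivial step: it is the content of the Niven--Zuckerman direction of \reft{NivenZuckerman}, and the paper stresses that precisely this implication was historically assumed without proof. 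As written, your reduction is circular.

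The fix is to drop the detour through \refd{normal} and run your window argument directly for aligned positions. Fix $j$ and $r\in\{0,\ldots,j-1\}$, take $N$ large, and use simple normality of $x$ in base $b^{jN}$. Within each length-$jN$ window $D$, count occurrences of a length-$j$ block $B$ only at positions congruent to $r+1\pmod j$; there are $N+O(1)$ such positions and the enumeration gives $\sum_D \nu_r(B;D)=(N+O(1))\,b^{jN-j}$. The same averaging then yields aligned frequency $b^{-j}$ for every residue $r$, which is exactly simple normality of $b^r x$ in base $b^j$; the remaining shifts $b^{i}x$ reduce to these via $i=qj+r$.
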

\reft{Pillai} was improved in 1951 by I. Niven and H. S. Zuckerman \cite{NivenZuckerman} who proved
\footnote{
A simpler proof of \reft{Pillai} was given by J. E. Maxfield in \cite{MaxfieldPillai}.  J. W. S. Cassels gave a shorter proof of \reft{NivenZuckerman} in \cite{CasselsNZ}.
}
\begin{thrm}\labt{NivenZuckerman}
\refd{normal} and \refd{normalBorel} are equivalent.
\end{thrm}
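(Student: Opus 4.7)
The plan is to derive \reft{NivenZuckerman} directly from \reft{Pillai}, which we are permitted to assume. The equivalence has two directions: one is essentially immediate, while the other requires a shift-invariance observation for the notion of normality of every order.

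For the easy direction, \refd{normalBorel} $\Rightarrow$ \refd{normal}, suppose $x$ is normal in the Borel sense. Specializing to $j = 0$ (i.e., to $x$ itself in the list $x, bx, b^2 x, \ldots$) tells us that $x$ is simply normal in each of the bases $b, b^2, b^3, \ldots$. Applying \reft{Pillai} immediately yields that $x$ is normal in base $b$ in the sense of \refd{normal}.

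The nontrivial direction is \refd{normal} $\Rightarrow$ \refd{normalBorel}. I would first establish a shift-invariance lemma: if $x$ is normal in base $b$ in the sense of \refd{normal} and $j \geq 0$ is an integer, then $b^j x$ is also normal in base $b$ in the sense of \refd{normal}. The argument is a direct computation on the base-$b$ expansion: if the fractional part of $x$ has expansion $0.d_1 d_2 d_3 \ldots$ in base $b$, then the fractional part of $b^j x$ has expansion $0.d_{j+1} d_{j+2} d_{j+3} \ldots$ (up to an inconsequential adjustment if $x$ happens to have a terminating base-$b$ expansion, which we may safely ignore since the integer part and any finite prefix affect no asymptotic digit frequency). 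Because normality of order $k$ only depends on the limiting frequency of each length-$k$ block in the one-sided sequence $d_1 d_2 d_3 \ldots$, and this limiting frequency is invariant under truncating finitely many initial digits, $b^j x$ inherits normality of every order from $x$.

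Combining the lemma with \reft{Pillai} then finishes the argument: given $x$ normal in base $b$ in the sense of \refd{normal}, the lemma shows that $b^j x$ is normal in base $b$ for every $j \geq 0$, and \reft{Pillai} then shows each such $b^j x$ is simply normal in every base $b^k$, which is precisely \refd{normalBorel}. The main (and minor) obstacle is the bookkeeping needed to justify the shift-invariance lemma --- in particular, handling the integer part that emerges when multiplying by $b^j$ and the edge case of terminating base-$b$ expansions --- but beyond this the argument is a clean reduction to Pillai's theorem.
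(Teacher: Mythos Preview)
Your argument is correct. Both directions are handled cleanly: the implication \refd{normalBorel} $\Rightarrow$ \refd{normal} is indeed an immediate specialization to $j=0$ followed by \reft{Pillai}, and for the converse the shift-invariance lemma is exactly the right observation --- truncating finitely many digits leaves all asymptotic block frequencies unchanged, so $b^j x$ is normal of every order whenever $x$ is, and then \reft{Pillai} (read in the other direction) gives simple normality of each $b^j x$ in every base $b^k$.

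There is, however, nothing to compare against: the paper does not supply its own proof of \reft{NivenZuckerman}. The theorem is stated in the introduction purely as historical background, with the proof attributed to Niven and Zuckerman \cite{NivenZuckerman} (and a shorter proof to Cassels \cite{CasselsNZ}). So your write-up is not a competing approach but rather a self-contained argument the paper chose to omit. If anything, your reduction to \reft{Pillai} plus shift-invariance is the standard modern route and is essentially what one finds in expositions such as \cite{BugeaudBook}; the original Niven--Zuckerman paper proceeds somewhat differently, working directly with block counts along residue classes (see the quotation in the Appendix).
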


 It should be noted that both of these results require some work to establish, but were assumed without proof by several authors.  For example, M. W. Sierpinski assumed \reft{Pillai} in \cite{Sierpinski} without proof.  Moreover, D. G. Champernowne \cite{Champernowne}, A. H. Copeland and P. Erd\H os \cite{CopeErd}, and other authors took \refd{normal} as the definition of a normal number before it was proven that \refd{normal} and \refd{normalBorel} are equivalent.  More information can be found in Chapter 4 of the book of Y. Bugeaud \cite{BugeaudBook}.

The following theorem was proven by H. Furstenberg in his seminal paper ``Disjointness in Ergodic Theory, Minimal Sets, and a Problem in Diophantine Approximation''\cite{FurstenbergDisjoint} on page 23 as an application of disjointness to stochastic sequences.
\begin{thrm}\labt{basebnormalii}
Suppose that $x=d_0.d_1d_2\cdots$ is the $b$-ary expansion of $x$.  Then $x$ is normal in base $b$ if and only if for all natural numbers $m$ and $r$ the real number $0.d_{r}d_{m+r}d_{2m+r}d_{3m+r}\cdots$ is normal in base $b$.
\end{thrm}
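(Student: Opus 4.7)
The reverse direction is immediate: setting $m = 1$ and $r = 1$, the hypothesis gives that $0.d_1 d_2 d_3 \cdots$ is normal in base $b$, and this number differs from $x$ only by the integer $d_0$, so $x$ itself is normal in base $b$. The content of the theorem therefore lies in the forward direction, which I would handle by reducing, via \reft{Pillai} and \reft{NivenZuckerman}, to a counting statement about base-$b^m$ digits of $x$.

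The first step is to promote normality of $x$ in base $b$ to normality of $x$ in base $b^m$. By \reft{NivenZuckerman}, the modern definition \refd{normal} is equivalent to the Borel definition \refd{normalBorel}, so $x$ is simply normal in every base $b^j$; specializing to $j \in \{m, 2m, 3m, \ldots\}$ and applying \reft{Pillai} with $b^m$ in place of $b$ yields that $x$ is normal in base $b^m$. It then suffices to prove: if $x$ is normal in base $b^m$, then for each $r \in \{1, 2, \ldots, m\}$ the sequence $(d_{(k-1)m + r})_{k \geq 1}$ is normal in base $b$. An arbitrary $r \in \mathbb{N}$ is handled by writing $r = qm + r_0$ with $r_0 \in \{1, \ldots, m\}$, yielding a finite shift of the case $r_0$, which does not affect normality.

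Write the base-$b^m$ expansion of $x$ as $x = e_0 . e_1 e_2 e_3 \cdots$ with $e_k = \sum_{i=1}^{m} d_{(k-1)m + i}\, b^{m-i}$, and let $\pi_r : \{0, 1, \ldots, b^m - 1\} \to \{0, 1, \ldots, b - 1\}$ be the map extracting the base-$b$ digit of weight $b^{m-r}$. Then $d_{(k-1)m + r} = \pi_r(e_k)$, so the task is to show that $(\pi_r(e_k))_{k \geq 1}$ is normal in base $b$. Fix a block $(\beta_1, \ldots, \beta_n) \in \{0, \ldots, b-1\}^n$ and let $S = \{(E_1, \ldots, E_n) \in \{0, \ldots, b^m - 1\}^n : \pi_r(E_i) = \beta_i \text{ for all } i\}$. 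Fixing one base-$b$ digit in each coordinate leaves $m - 1$ digits free, so $|S| = b^{n(m-1)}$. Since $x$ is normal in base $b^m$, each $n$-block over $\{0, \ldots, b^m - 1\}$ occurs in $(e_k)_{k \geq 1}$ with frequency $b^{-nm}$, hence $(\beta_1, \ldots, \beta_n)$ occurs in $(\pi_r(e_k))_{k \geq 1}$ with frequency $|S| \cdot b^{-nm} = b^{n(m-1)} \cdot b^{-nm} = b^{-n}$, as required.

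There is no serious obstacle: once one views selection of every $m$-th base-$b$ digit as projection of a base-$b^m$ digit onto a fixed coordinate, the proof becomes an elementary counting argument. The real substance is imported in the first step, where \reft{Pillai} and \reft{NivenZuckerman} bridge the two bases; alternative routes via Weyl exponential sums or via counting wildcard-padded blocks of length $(n-1)m + 1$ in base $b$ would also work, but would be longer than the argument sketched above.
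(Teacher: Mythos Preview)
Your argument is correct. The paper does not give its own proof of this statement but attributes it to Furstenberg, who derived it as an application of his theory of disjointness to stochastic sequences; that route is ergodic-theoretic, treating the subsampling $n\mapsto mn+r$ dynamically and invoking disjointness to transport genericity under $\times b$ to the sampled sequence. Your route is genuinely different and much more elementary: you trade the ergodic machinery for the classical equivalences of \reft{Pillai} and \reft{NivenZuckerman} to pass from base $b$ to base $b^m$, after which selecting every $m$-th base-$b$ digit is just a coordinate projection $\pi_r$ of the base-$b^m$ digit string, and the block-frequency computation is a finite sum over the fiber $\pi_r^{-1}(\beta_1)\times\cdots\times\pi_r^{-1}(\beta_n)$. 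What Furstenberg's approach buys is robustness---disjointness handles sampling schemes far more general than arithmetic progressions---while your approach buys self-containment, importing only results already quoted in the paper.
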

H. Furstenberg mistakenly claimed
\footnote{See Appendix}
 that this provided an alternate proof of \reft{NivenZuckerman}, but actually proved that an entirely different definition of normality is equivalent to \refd{normal}.  We will say that $x$ is {\it AP normal of type I in base $b$} if $x$ satisfies \refd{normalBorel} and {\it AP normal of type II in base $b$} if $x$ satisfies the notion introduced in \reft{basebnormalii}.  Thus, for numbers expressed in base $b$
$$
\hbox{normality }\Leftrightarrow \hbox{ AP normality of type I } \Leftrightarrow \hbox{ AP normality of type II}.
$$


The $Q$-Cantor series expansion, first studied by G. Cantor in \cite{Cantor},\footnote{G. Cantor's motivation to study the Cantor series expansions was to extend the well known proof of the irrationality of the number $e=\sum 1/n!$ to a larger class of numbers.  Results along these lines may be found in the monograph of J. Galambos \cite{Galambos}.} 
is a natural generalization of the $b$-ary expansion. 
Let $\mathbb{N}_k:=\mathbb{Z} \cap [k,\infty)$.  
If $Q \in \NN$, then we say that $Q$ is a {\it basic sequence}.
Given a basic sequence $Q=(q_n)_{n=1}^{\infty}$, the the {\it $Q$-Cantor series expansion} of a real $x$ in $\mathbb{R}$ is the (unique)\footnote{Uniqueness can be proven in the same way as for the $b$-ary expansions.} expansion of the form
\begin{equation} \labeq{cseries}
x=E_0+\sum_{n=1}^{\infty} \frac {E_n} {q_1 q_2 \ldots q_n},
\end{equation}
where $E_0=\floor{x}$ and $E_n$ is in $\{0,1,\ldots,q_n-1\}$ for $n\geq 1$ with $E_n \neq q_n-1$ infinitely often. We abbreviate \refeq{cseries} with the notation $x=E_0.E_1E_2E_3\ldots$ w.r.t. $Q$.
Clearly, the $b$-ary expansion is a special case of \refeq{cseries} where $q_n=b$ for all $n$.  If one thinks of a $b$-ary expansion as representing an outcome of repeatedly rolling a fair $b$-sided die, then a $Q$-Cantor series expansion may be thought of as representing an outcome of rolling a fair $q_1$ sided die, followed by a fair $q_2$ sided die and so on.

The authors feel that the equivalence of \refd{normal} and \refd{normalBorel} and other similar ones is a far more delicate topic than is typically assumed.  
The core of E. Borel's definition is that a number is normal in base $b$ if blocks of digits occur with the desired relative frequency along all arithmetic progressions.  
We will extend \refd{normalBorel} to the $Q$-Cantor series expansions and call it {\it AP $Q$-normality of type I}.  P. Laffer has already studied a similar definition in \cite{Laffer}, but made no comparison with the definition of $Q$-normality that naturally arises as an extension of \refd{normal}.  We will prove that the analogous extension of \reft{NivenZuckerman} to the $Q$-Cantor series expansions will no longer hold.  

We will refer to the natural extension of the concept introduced in \reft{basebnormalii} to the $Q$-Cantor series expansions as {\it AP $Q$-normality of type II}.  
We will show that for $Q$-Cantor series expansions that while $Q$-normality is implied by AP $Q$-normality of type I, the bigger picture of the relationship between these notions  is far more complex than for the $b$-ary expansions.\footnote{Theorem 2.5 in \cite{Mance4} gives that the set of AP $Q$-normal numbers of type I is contained in the set of $Q$-normal numbers.}

We also note the following trivial theorem about normal numbers in base $b$.

\begin{thrm}\labt{baseborders}
If $x$ is normal of order $k$ in base $b$, then $x$ is normal of orders $1, 2, \cdots, k-1$ in base $b$.
\end{thrm}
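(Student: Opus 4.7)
The plan is to reduce normality of order $j$ (for $1 \le j < k$) to normality of order $k$ by partitioning each occurrence of a short block according to how it extends to a longer block. Let $B = b_1 b_2 \cdots b_j$ be any fixed block of length $j$ over the alphabet $\{0, 1, \ldots, b-1\}$. For each string $C$ of length $k - j$ over that alphabet, the concatenation $BC$ is a block of length $k$, and the $b^{k-j}$ blocks obtained in this way are exactly those blocks of length $k$ whose initial segment equals $B$.

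Next I would set up the counting identity. Write $x = d_0 . d_1 d_2 d_3 \cdots$ in base $b$. Let $N_B(N)$ denote the number of indices $i \in \{1, 2, \ldots, N\}$ with $d_i d_{i+1} \cdots d_{i+j-1} = B$, and similarly $N_{BC}(N)$ for blocks of length $k$. The occurrences of $B$ starting at position $i$, for $1 \le i \le N - (k-j)$, are in bijection with the occurrences of exactly one block $BC$ at position $i$ (namely $C = d_{i+j} \cdots d_{i+k-1}$). Hence
\begin{equation*}
N_B(N) = \sum_{C \in \{0,1,\ldots,b-1\}^{k-j}} N_{BC}(N) + O(1),
\end{equation*}
where the $O(1)$ absorbs the at most $k - j$ indices near the right end at which $B$ may appear but which are too close to the $N$th digit to complete a length-$k$ block. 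The implied constant depends only on $k$, not on $N$.

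Now I would divide by $N$ and pass to the limit. Since $x$ is normal of order $k$, each $N_{BC}(N)/N \to b^{-k}$ as $N \to \infty$. Summing over the $b^{k-j}$ extensions $C$ and using that the $O(1)$ term vanishes in the limit,
\begin{equation*}
\lim_{N \to \infty} \frac{N_B(N)}{N} \;=\; \sum_{C} b^{-k} \;=\; b^{k-j} \cdot b^{-k} \;=\; b^{-j}.
\end{equation*}
Since $B$ was an arbitrary block of length $j$, this shows $x$ is normal of order $j$ in base $b$. Repeating (or applying) this for each $j \in \{1, 2, \ldots, k-1\}$ gives the conclusion.

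There is no substantive obstacle here; the only point requiring a modicum of care is the bookkeeping at the right endpoint of the window $\{1, \ldots, N\}$, which is handled uniformly by the $O(1)$ correction. The argument is essentially a finite linear relation among the counting functions combined with the limit hypothesis from normality of order $k$.
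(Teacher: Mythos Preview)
Your argument is correct; the counting identity $N_B(N) = \sum_{C} N_{BC}(N) + O(1)$ together with normality of order $k$ immediately yields the frequency $b^{-j}$ for each block $B$ of length $j<k$. The paper itself does not supply a proof at all, merely labeling the statement a ``trivial theorem,'' so your write-up is simply a spelled-out version of the standard one-line reduction the authors had in mind.
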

Perhaps much more suprising than the lack of a simple extension of \reft{NivenZuckerman} and \reft{basebnormalii} to the $Q$-Cantor series expansion is that for all $k$, we give an example of a computable basic sequence $Q$ and a computable real number $x$ that is $Q$-normal of order $k$, but not $Q$-normal of orders $1, 2, \cdots, k-1$.  An example is given in \reft{normalorderknotlower}.

In order to prove these theorems, we must extend the constructions given by the second author in \cite{Mance} to construct numbers that are AP $Q$ normal of types I and II. 
A basic sequence $Q=(q_n)$ is {\it infinite in limit} if $q_n \to \infty$.
Suppose that $M=(m_t)_t$ is an increasing sequence of positive integers.  Let $N_{M,n}^Q(B,x)$ be the number of occurrences of the block $B$ at positions $m_t$ for $m_t \leq n$ in the $Q$-Cantor series expansion of $\{x\}$ and let $N_n^Q(B,x)=N_{(t),n}^Q(B,x)$.
 If $x=E_0.E_1E_2 \cdots$ w.r.t. {P}, then put
$$
\ppq(x):=\sum_{n=1}^\infty \frac {\min(E_n,q_n-1)} {\q{n}}.
$$
The functions $\ppq$ and their properties
\footnote{See \cite{ppq1} for an overview of properties such as continuity and multifractal analysis of $\ppq$.  There are many fractals associated with the functions $\ppq$, but they will not affect the results discussed in this paper.}
 will be of critical importance to our constructions and were the topic to the predecessor to this paper by the second author \cite{ppq1}.  The key property of these functions is the following theorem that was proven in \cite{ppq1}.
\begin{thrm}\labt{mainpsi}
\footnote{
The conclusions of \reft{mainpsi} sometimes do not hold without the requirement that $E_n<\min_{2 \leq r \leq j} (q_{r,n}-1)$ for infinitely many $n$.  For example, consider $p_n=3$ and 
\begin{displaymath}
q_n=\left\{ \begin{array}{ll}
2 & \textrm{if $n \equiv 0 \pmod{2}$}\\
3 & \textrm{if $n \equiv 1 \pmod{2}$}
\end{array} \right. .
\end{displaymath}
Let $x=7/8=0.\overline{21}$ w.r.t. $P$.  Then $\ppq(x)=1.\overline{0}$ w.r.t. $Q$ so $N_n^P((1),x)=\floor{n/2}$ while $N_n^Q((1),\ppq(x))=0$ for all $n$.
}
Suppose that $M=(m_t)$ is an increasing sequence of positive integers and  $Q_1=(q_{1,n}), Q_2=(q_{2,n}),\cdots, Q_j=(q_{j,n})$ are basic sequences and infinite in limit.
  If $x=E_0.E_1E_2\cdots$ w.r.t $Q_1$ satisfies
$E_n<\min_{2 \leq r \leq j} (q_{r,n}-1)$ for infinitely many $n$, then for every block~$B$
$$
N_{M,n}^{Q_j}\left(B,\left(\psi_{Q_{j-1},Q_j} \circ \psi_{Q_{j-2},Q_{j-1}} \circ \cdots \circ \psi_{Q_1,Q_2}\right)(x)\right)
=N_{M,n}^{Q_1}(B,x)+O(1).
$$
\end{thrm}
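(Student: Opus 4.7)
The plan is to proceed by induction on $j$, with the crux of the matter already visible at $j=2$. Write $x_1 = x$ and $x_r = \psi_{Q_{r-1},Q_r}(x_{r-1})$ for $2 \leq r \leq j$. The first task is to identify the $Q_r$-digits of $x_r$ explicitly. By unpacking the definition of $\psi$ and a short induction, I would show that the $Q_r$-digit of $x_r$ at position $n$ is
$$
D_{r,n} \;=\; \min\bigl(E_n,\, q_{2,n}-1,\, q_{3,n}-1,\, \ldots,\, q_{r,n}-1\bigr).
$$
For this to be the actual Cantor series expansion (not merely a formal sum) one needs $D_{r,n} \neq q_{r,n}-1$ infinitely often, and this is precisely what the hypothesis ``$E_n < \min_{2 \leq r \leq j}(q_{r,n}-1)$ infinitely often'' guarantees: at each such $n$ the minimum collapses to $E_n$, which is strictly smaller than every $q_{r,n}-1$. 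Invoking the uniqueness of the Cantor series expansion then pins $D_{r,n}$ down as above.

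Next I would exploit the infinite-in-limit hypothesis to turn the $\min$ into a non-operation on $B$-sized digits. Fix a block $B = (b_1, \ldots, b_k)$ and let $M_B = \max_i b_i$. Since each of $Q_2, \ldots, Q_j$ tends to infinity, there is $N_0 = N_0(B, Q_2, \ldots, Q_j)$ with $q_{r,n}-1 > M_B$ for every $n \geq N_0$ and every $r \in \{2,\ldots,j\}$. I would then verify a clean dichotomy at positions $n \geq N_0$: if $E_n \leq M_B$, every $q_{r,n}-1$ exceeds $E_n$, so $D_{j,n} = E_n$; conversely if $D_{j,n} \leq M_B$, the minimum cannot be achieved by any $q_{r,n}-1$, so $D_{j,n} = E_n$. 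Consequently, for $n \geq N_0$, $E_n$ equals a given $b_i$ iff $D_{j,n}$ equals that $b_i$.

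This digit-matching lifts immediately to blocks: for any starting position $m_t \geq N_0$, the block $B$ appears at $m_t$ in the $Q_1$-expansion of $x$ iff it appears at $m_t$ in the $Q_j$-expansion of $x_j$. Hence
$$
\bigl|\,N_{M,n}^{Q_j}(B,x_j) - N_{M,n}^{Q_1}(B,x)\,\bigr| \;\leq\; \#\{t : m_t < N_0\} \;\leq\; N_0,
$$
uniformly in $n$, yielding the $O(1)$ bound with implied constant depending only on $B$ and on $Q_2,\ldots,Q_j$.

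The main obstacle, I expect, is not the combinatorial estimate but the clean identification of the digits $D_{r,n}$ of the iterated image. Carries in $Q$-Cantor arithmetic can perturb the digit sequence, and one needs simultaneously that $\min(\cdot, q_{r,n}-1) \leq q_{r,n}-1$ at \emph{every} position (to make the formal sum lie in the admissible digit ranges) and that $D_{r,n} < q_{r,n}-1$ infinitely often (to pin down the true $Q_r$-expansion via uniqueness). The footnote to the statement underscores that the ``$E_n$ small infinitely often'' hypothesis cannot be dropped: without it, as the counterexample with $p_n = 3$, $q_n \in \{2,3\}$ shows, the resulting series is a different Cantor expansion and the block counts can disagree catastrophically. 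Once the identification of $D_{r,n}$ is secured, the tail-matching argument via $q_{r,n} \to \infty$ finishes the proof.
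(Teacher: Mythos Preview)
Your argument is correct. The identification $D_{r,n}=\min(E_n,q_{2,n}-1,\ldots,q_{r,n}-1)$ follows exactly as you describe: the truncated digits lie in $\{0,\ldots,q_{r,n}-1\}$ by construction, and the hypothesis that $E_n<\min_{2\le r\le j}(q_{r,n}-1)$ infinitely often forces $D_{r,n}<q_{r,n}-1$ infinitely often, so uniqueness of the $Q_r$-expansion applies at each stage of the induction. Your ``carry'' worry is in fact moot here, since no digit ever exceeds its base; the only danger is the all-maximal tail, which the hypothesis rules out. The tail-matching via $N_0$ is then routine and gives the $O(1)$ bound with constant depending only on $B$ and $Q_2,\ldots,Q_j$.

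As for comparison with the paper: this theorem is not proved in the present paper at all. It is quoted from the predecessor paper \cite{ppq1} (``Number theoretic applications of a class of Cantor series fractal functions, I''), where the argument is carried out. Your proof is the natural one and is almost certainly what appears there; in any case it stands on its own.
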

We should note that not all constructions in the literature of normal numbers are of computable real numbers. For example, the construction by M. W. Sierpinski in \cite{Sierpinski} is not of a computable real number.  V. Becher and S. Figueira modified M. W. Sierpinski's work to give an example of a computable absolutely normal number in \cite{BecherFigueira}.  
 Since not every basic sequence is computable we face an added difficulty.  Moreover, many of the numbers constructed in \cite{ppq1} by using \reft{mainpsi} are not computable.  Thus, we will be careful to indicate which numbers we construct are computable.  No deep knowledge of computability theory will be used and any time we make such a claim there will exist a simple algorithm to compute the number under consideration to any degree of precision.

The main theoretical result of this paper will be to reduce the problem of constructing certain types of normal numbers to solving a system of Diophantine relations.  This reduction is given in \reft{normalorders} and \reft{APnormalorders} and relies on \reft{mainpsi}.  Both of these theorems are technical and require several definitions to state so we defer their formulations to \refs{mainresults}.
We finish the paper with several conjectures in \refs{conjectures}.

\subsection{Definitions}\labs{definitions}
A {\it block} is an ordered tuple of non-negative integers, a {\it block of length $k$} is an ordered $k$-tuple of integers, and {\it block of length $k$ in base $b$} is an ordered $k$-tuple of integers in $\{0,1,\ldots,b-1\}$.
For a given basic sequence $Q$, definitione\footnote{For the remainder of this paper, we will assume the convention that the empty sum is equal to $0$ and the empty product is equal to $1$.}
$$
Q_n^{(k)}:=\sum_{j=1}^n \frac {1} {q_j q_{j+1} \ldots q_{j+k-1}}.
$$

A. R\'enyi \cite{Renyi} definitioned a real number $x$ to be {\it normal} with respect to $Q$ if for all blocks $B$ of length $1$,
\begin{equation}\labeq{rnormal}
\lim_{n \rightarrow \infty} \frac {N_n^Q (B,x)} {Q_n^{(1)}}=1.
\end{equation}
If $q_n=b$ for all $n$ and we restrict $B$ to consist of only digits less than $b$, then \refeq{rnormal} is equivalent to {\it simple normality in base $b$}, but not equivalent to {\it normality in base $b$}. A basic sequence $Q$ is {\it $k$-divergent} if
$\lim_{n \rightarrow \infty} Q_n^{(k)}=\infty$.
$Q$ is {\it fully divergent} if $Q$ is $k$-divergent for all $k$ and {\it $k$-convergent} if it is not $k$-divergent.  

\begin{definition}\labd{1.7} A real number $x$ is {\it $Q$-normal of order $k$} if for all blocks $B$ of length $k$,
$$
\lim_{n \rightarrow \infty} \frac {N_n^Q (B,x)} {Q_n^{(k)}}=1.
$$
We let $\Nk{Q}{k}$ be the set of numbers that are $Q$-normal of order $k$.  $x$ is {\it $Q$-normal} if
$
x \in \NQ := \bigcap_{k=1}^{\infty} \Nk{Q}{k}.
$
\end{definition}

For $Q$ that are infinite in limit,
it has been shown that the set of all real $x$  that are $Q$-normal of order $k$ has full Lebesgue measure if and only if $Q$ is $k$-divergent \cite{Mance4}.  Early work in this direction has been done by A. R\'enyi \cite{Renyi}, T.  \u{S}al\'at \cite{Salat4}, and F. Schweiger \cite{SchweigerCantor}.  Therefore, if $Q$ is infinite in limit, then the set of all real $x$ that are $Q$-normal has full Lebesgue measure if and only if $Q$ is fully divergent.

Given a basic sequence $M$, we definitione the basic sequence $\Lambda_M(Q):=(q_{m_t})_{t=1}^\infty$.
If $x=E_0.E_1E_2\cdots$ w.r.t. $Q$, then let $\Upsilon_{Q,M}(x):=0.E_{m_1}E_{m_2}E_{m_3}\cdots$ w.r.t. $\Lambda_M(Q)$.  
Let $\mr:=\{(a,b) \in \mathbb{N} \times \mathbb{N} : 0 \leq b \leq a-1\}$.
For any $\mrmr$, let $\mathscr{A}_{m,r}:=(mt+r)_{t=0}^\infty$.  
Let $N_{n,m,r}^Q(B,x):=N_{\mathscr{A}_{m,r},n}(B,x)$ and ${N_{n,m,r}^Q}^\prime(B,x):=N_n^{\Lambda_{\mathscr{A}_{m,r}}(Q)}(B,\Upsilon_{Q,\mathscr{A}_{m,r}}(x))$.  The following definition is motivated by \refd{normalBorel} and \reft{basebnormalii}.
For $\mrmr$, let
$$
Q_{n,m,r}^{(k)} := \sum_{j=0}^{\lfloor \frac {n-r}{m} \rfloor} \frac {1}{q_{mj+r}q_{mj+r+1}...q_{mj+r+k-1}},
$$

\begin{definition}\labd{apnormal}
Let $\NAPIkmr{Q}{k}{m}{r}$ be the set of real numbers $x$ such that 
$$
\lim_{n \rightarrow \infty} \frac{N_{n,m,r}^Q(B,x)}{Q_{n,m,r}^{(k)}}=1
$$
for all blocks $B$ of length $k$.  Additionally, let $\NAPIkm{Q}{k}{m}:=\bigcap_{r=0}^{m-1} \NAPIkmr{Q}{k}{m}{r}$ and $\NAPIk{Q}{k}:=\bigcap_{k=1}^m \NAPIkm{Q}{k}{m}$.
A real number $x$ is {\it AP $Q$-normal of order $k$ of type I} if $x \in \NAPIk{Q}{k}$ and {\it AP $Q$-normal of type I} if $x \in \NAPIQ:=\bigcap_{k=1}^\infty \NAPIk{Q}{k}$.

Let 
$$
\NAPIIkmr{Q}{k}{m}{r}:=\UPAmri\left( \Nk{\LAAmr}{k}  \right)
$$
and $\NAPIIkm{Q}{k}{m}:=\bigcap_{r=0}^{m-1} \NAPIIkmr{Q}{k}{m}{r}$.
A real number $x$ is {\it AP $Q$-normal of order $k$ of type II} if $x \in \NAPIIk{Q}{k}:=\bigcap_{r=0}^{m-1} \NAPIIkmr{Q}{k}{k}{r}
$.
Let 
$$
\NAPII{Q}:=\bigcap_{k=1}^\infty  \bigcap_{\mrmr} \NAPIIkmr{Q}{k}{m}{r}
$$
 be the set of numbers that are {\it AP $Q$-normal of type II}.
\end{definition}

The sets $\NAPI{Q}$ and $\NAPII{Q}$ introduced in \refd{apnormal} give a natural extension of the notions of AP normality of type I and II given in the intro in \refd{normalBorel} and \reft{basebnormalii}.   These sets along with $\NAPIk{Q}{k}$ and $\NAPIIk{Q}{k}$ will be the main focus of this paper.

A basic sequence $Q$ is {\it $(k,m,r)$-divergent of type I (resp. type II)} if $\lim_{n \rightarrow \infty} Q_{n,m,r}^{(k)}=\infty$
(resp. $\lim_{n \rightarrow \infty} (\LAAmr)_n^{(k)}=\infty$).
$Q$ is {\it fully divergent of type I} if $Q$ is $(k,m,r)$-divergent of type I  for all $k$ and $\mrmr$.
$Q$ is {\it fully divergent of type II} if $Q$ is $(k,k,r)$-divergent of type II  for all $k$ and $r$.
Suppose that $Q$ is infinite in limit.  It can be proven using the method in \cite{Mance4} that $\NAPIkmr{Q}{k}{m}{r}$ (resp. $\NAPIIkmr{Q}{k}{m}{r}$) is a set of full Lebesgue measure if and only if $Q$ is $(k,m,r)$-divergent of type I (resp. type II).


\begin{definition}
\footnote{It is unknown how the sets $\RNQ$, $\RNAPIQ$, and $\RNAPIIQ$ are related except that $\RNAPIQ \subseteq \RNQ$.}
A real number $x$ is {\it $Q$-ratio normal of order $k$} (here we write $x \in \RNk{Q}{k}$) if for all blocks $B_1$ and $B_2$ of length $k$
$$
\lim_{n \to \infty} \frac {N_n^Q (B_1,x)} {N_n^Q (B_2,x)}=1.
$$
Additonally, $x$ is {\it $Q$-ratio normal} if
$
x \in \RNQ := \bigcap_{k=1}^{\infty} \RNk{Q}{k}$.  The sets $\RNAPIQ$ and $\RNAPIIQ$ are definitioned similarly to $\NAPIQ$ and $\NAPIIQ$.

\end{definition}

\subsection{Results}\labs{mainresults}

For the rest of this paper, given sequences of non-negative integers $l=(l_i)$ and $b=(b_i)$ with $b_i \geq 2$ and sequences blocks of digits $X=(X_i)$, we write  $L_i:=\left| X_1^{l_1} \ldots X_i^{l_i}\right|=l_1 |X_1|+\ldots+l_i |X_i|$.  Additionally, we let $\Gamma(l,b,X):=(\gamma_n)_{n=1}^{\infty}$, where $\gamma_n:=b_i$ for $L_{i-1} < n \leq L_i$ and $\eta(l,b,X):=\sum_{n=1}^{\infty} \frac {E_n} {\gamma_1 \cdots \gamma_n}$, where $(E_1,E_2,\ldots)=X_1^{l_1} X_2^{l_2} X_3^{l_3} \cdots$.  

For a compact metric space $X$, let $\mathscr{M}(X)$ be the collection of all Borel probability measures on $X$.
Given $\mu \in \measNNz$ and $B=(b_1,\cdots,b_k) \in \Nc{0}^k$, we write
$$
[B]:=\left\{\omega=(\omega_1,\omega_2,\cdots) \in \Nc{0}^{\mathbb{N}} : \omega_j=b_j \forall j \in [1,k]       \right\} \hbox{ and }
\mu(B):=\mu\left([B]\right).
$$
A block of digits $Y$ is~{\it $(\epsilon,k,\mu)$-normal} if for all blocks $B$ of length $m \leq k$, we have 
$$(1-\epsilon)|Y|\mu(B) \leq N(B,Y) \leq (1+\epsilon) |Y| \mu(B).$$
A measure $\mu \in \measNNz$ is {\it $(v,b)$-uniform} if for all $k$ and blocks $B$ of length $k$ in base $v \leq b$, we have $\mu(B)=b^{-k}$.
A {\it block friendly family(BFF)}, $W$, is a sequence of $6$-tuples $((l_i,b_i,v_i,\epsilon_i,k_i,\mu_i))_{i=1}^{\infty}$ with non-decreasing sequences of non-negative integers $(l_i)_{i=1}^{\infty}$, $(b_i)_{i=1}^{\infty}$, $(v_i)_{i=1}^{\infty}$, and $(k_i)_{i=1}^{\infty}$ for which $b_i \geq 2$, $b_i \rightarrow \infty$ and $v_i \rightarrow \infty$, such that $(\mu_i)_{i=1}^{\infty} \in \measNNz^{\mathbb{N}}$ is a sequence of $(v_i,b_i)$-uniform measures and $(\epsilon_i)_{i=1}^{\infty}$ strictly decreases to $0$.
Let $R(W):=[1,\lim_{i \to \infty} k_i] \cap \mathbb{N}$.
If $(X_i)_{i=1}^{\infty}$ is a sequence of blocks such that $|X_i|$ is non-decreasing and $X_i$ is $(\epsilon_i,k_i,\mu_i)$-normal, then $(X_i)_{i=1}^{\infty}$ is   {\it $W$-good} if for all $k$ in $R$,
\begin{align}\labeq{good1}
\frac {b_i^k} {\epsilon_{i-1}-\epsilon_i} &= o(|X_i|) ;\\
\labeq{good2}
\frac {l_{i-1}} {l_i} \cdot \frac {|X_{i-1}|} {|X_i|}&=o(i^{-1}b_i^{-k});\\
\labeq{good3}
\frac {1} {l_i} \cdot \frac {|X_{i+1}|} {|X_i|}&=o(b_i^{-k}).
\end{align}
We will write $\Gamma(W,X):=\Gamma(l,b,X)$ and $\eta(W,X):=\eta(l,b,X)$.  The following is the main theorem in the paper \cite{Mance} of the second author.\footnote{Our statement of \reft{thm3.1} and the preceding definitions have been altered to be more concisely stated than they were in \cite{Mance}.  We have also removed some unnecessary hypotheses.}
\begin{thrm}\labt{thm3.1}
Let $W=((l_i,b_i,v_i,\epsilon_i,k_i,\mu_i))_{i=1}^{\infty}$ be a BFF and suppose that  $X=(X_i)_{i=1}^{\infty}$ is $W$-good. 
Then\footnote{Clearly, if $k_i \rightarrow \infty$, then $\eta(W,X) \in \N{\Gamma(W,X)}$.}
$$\eta(W,X) \in \bigcap_{k \in R(W)} \Nk{\Gamma(W,X)}{k}.$$
If $W$ and $X$ are computable, then $\eta(W,X)$ and $\Gamma(W,X)$ are computable.
\end{thrm}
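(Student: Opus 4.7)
The plan is to fix $k \in R(W)$ and a block $B$ of length $k$, and prove directly that $N_n^{\Gamma}(B, \eta(W,X))/\Gamma_n^{(k)} \to 1$, where $\Gamma := \Gamma(W,X)$. Since $v_i \to \infty$ and $k_i$ eventually exceeds $k$, one fixes an index $i_0$ such that for all $i \geq i_0$ every digit of $B$ lies in $\{0, \ldots, v_i - 1\}$ and $k \leq k_i$. The $(v_i, b_i)$-uniformity of $\mu_i$ then gives $\mu_i(B) = b_i^{-k}$ for $i \geq i_0$, so the relevant measure values in the construction align with the denominator $\Gamma_n^{(k)}$.

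The key step is a level-by-level accounting of numerator and denominator. For $n$ with $L_{i-1} < n \leq L_i$, one decomposes each sum as a contribution from the completed levels $X_1^{l_1}, \ldots, X_{i-1}^{l_{i-1}}$, a contribution from the prefix of $X_i^{l_i}$ of length $n - L_{i-1}$, and explicit corrections from windows of length $k$ straddling level boundaries. Since $\gamma_m \equiv b_j$ on level $j$, the denominator contribution from a full level is $(l_j |X_j| - k + 1) b_j^{-k}$. For the numerator, one writes $X_j^{l_j}$ as $l_j$ concatenated copies of $X_j$; occurrences of $B$ either lie entirely within one copy (there are $N(B, X_j)$ per copy) or straddle two adjacent copies (at most $k - 1$ per boundary, giving $O(l_j k)$ in total), so for $j \geq i_0$ the $(\epsilon_j, k_j, \mu_j)$-normality of $X_j$ yields
$$N(B, X_j^{l_j}) = l_j (1 + O(\epsilon_j)) |X_j|\, b_j^{-k} + O(l_j k).$$
Level-boundary terms between $X_j^{l_j}$ and $X_{j+1}^{l_{j+1}}$ contribute at most $k - 1$ windows of bounded total mass in both sums.

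To conclude, the three $W$-good conditions enter in exactly the places where they must. Condition \refeq{good2}, iterated, shows that $\sum_{j < i} l_j |X_j| b_j^{-k} = o(l_i |X_i| b_i^{-k})$, so the current level $i$ dominates the denominator and the bounded contribution from levels $j < i_0$ is negligible. Condition \refeq{good1} forces the multiplicative error $\epsilon_j \cdot l_j |X_j| b_j^{-k}$ from $(\epsilon_j, k_j, \mu_j)$-normality to be $o(l_j |X_j| b_j^{-k})$ since $\epsilon_j \downarrow 0$, while the stronger form with the gap $\epsilon_{j-1} - \epsilon_j$ lets one control the running sum of such errors. Condition \refeq{good3} bounds the internal-boundary error $O(l_j k)$ and the cross-level corrections against the main term of the current level. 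Combining these gives $|N_n^{\Gamma}(B, \eta(W,X)) - \Gamma_n^{(k)}| = o(\Gamma_n^{(k)})$, which is the desired limit.

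The step I expect to be the main obstacle is the uniform treatment of the partial level $i$ as $n$ ranges over $(L_{i-1}, L_i]$, because $n$ may lie anywhere from just past $L_{i-1}$ (where the fresh contribution from level $i$ has not yet accumulated) to close to $L_i$ (where many copies of $X_i$ have been processed). The cleanest way is to split into the regime $n - L_{i-1} \leq |X_i|$, handled by a direct comparison with the previously completed levels via \refeq{good3}, and the regime $n - L_{i-1} > |X_i|$, in which one invokes $(\epsilon_i, k_i, \mu_i)$-normality of $X_i$ on the $\lfloor (n - L_{i-1})/|X_i| \rfloor$ completed copies and absorbs the remaining partial copy into an $O(|X_i|)$ error. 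The final assertion that $\eta(W,X)$ and $\Gamma(W,X)$ are computable when $W$ and $X$ are is routine: given $n$ one locates the unique $i$ with $L_{i-1} < n \leq L_i$ and outputs $b_i$ for $\gamma_n$, while the series defining $\eta(W,X)$ can be truncated to any prescribed precision by reading off finitely many digits of $X_1^{l_1} X_2^{l_2} \cdots$.
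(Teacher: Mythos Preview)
The paper does not actually prove this theorem; it is quoted from \cite{Mance} as the main result of that earlier paper, with the remark that the statement has been slightly streamlined. What the present paper does prove is the generalization \reft{APmain}, and the authors explicitly note that its lemmas parallel those leading to the proof of \reft{thm3.1} in \cite{Mance}. So the relevant comparison is between your outline and the proof of \reft{APmain} specialized to $m=1$, $r=0$.

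Your approach is correct and is in spirit the same as the paper's: both fix $k$ and a block $B$, split $n$ according to the level $i=i(n)$, write $n-L_{i-1}=\alpha|X_{i+1}|+\beta$ (your ``completed copies plus partial copy''), bound the numerator using $(\epsilon_j,k_j,\mu_j)$-normality of $X_j$ together with $O(k)$ boundary corrections, and then invoke \refeq{good1}--\refeq{good3} to kill the error terms. The organizational difference is that the paper introduces an auxiliary sum $S_n^{(k)}$ approximating $Q_n^{(k)}$ (\refl{thirdlemma}) and packages the entire error as a single rational function $g_i(w,z)$ in the partial-level variables $(w,z)=(\alpha,\beta)$; it then shows by a calculus argument (\refl{fifthlemma}) that $g_i$ is maximized at $(0,|X_{i+1}|)$ and that this maximum $\epsilon_i'$ tends to zero (\refl{sixthlemma}). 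Your direct accounting avoids this machinery at the cost of having to argue the uniform-in-$n$ control of the partial level by hand, which is exactly the obstacle you flagged; the paper's rational-function device is precisely a clean way to handle that uniformity in one stroke.
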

It is not difficult to see that $\Gamma(W,X)$ must always be fully divergent by using \refeq{good3}. We wish to extend \reft{thm3.1} to deal with $\NAPIkmr{Q}{k}{m}{r}$ and $\NAPIIkmr{Q}{k}{m}{r}$.

A block of digits $X$ is {\it $(\epsilon, k, m, \mu)$-normal of type I} (resp. {\it $(\epsilon, k, m, \mu)$-normal of type II}) if for all blocks $B$ of length $k^\prime \le k$ and for all $m^\prime \le m$ and $0 < r \le m^\prime$, $N_{|X|,m^\prime,r}(B,X)$ (resp. $N_{|X|,m^\prime,r}^\prime(B,X)$) is in the closed interval
$$
\left[\mu(B)\left\lceil\frac {|X|-r}{m'}\right\rceil(1-\epsilon), \mu(B)\left\lceil\frac {|X|-r}{m'}\right\rceil(1+\epsilon)\right].
$$
For a sequence of $7$-tuples $$D=((l_i,b_i,v_i,\epsilon_i,k_i,\mu_i,m_i))_{i=1}^\infty,$$ let $\hbox{BFF}(D):=((l_i,b_i,v_i,\epsilon_i,k_i,\mu_i))_{i=1}^\infty$.
An {\it APBFF} is a sequence of $7$-tuples $D=((l_i,b_i,v_i,\epsilon_i,k_i,\mu_i,m_i))_{i=1}^\infty$ where $\hbox{BFF}(D)$ is a BFF and $(m_i)$ is a sequence of non-decreasing integers.
Let $D$ be an APBFF and set $R(D):=R(\hbox{BFF}(D))$ and $S(D):=[1,\lim_{i \to \infty} m_i] \cap \mathbb{N}$. 
A sequence $(X_i)_{i=1}^{\infty}$ of $(\epsilon_i,k_i,m_i, \mu_i)$-normal of type I (resp. type II) blocks of non-decreasing length is said to be $D$-good of type I (resp. type II) if for all $k$ and $m$ in $R$, the conditions \refeq{good1}, \refeq{good2}, and \refeq{good3} hold.
For an APBFF $D=((l_i,b_i,v_i,\epsilon_i,k_i,\mu_i,m_i))_{i=1}^\infty$ and sequence of blocks $X=(X_i)$, we let $\Gamma(D,X):=\Gamma(\hbox{BFF}(D),X)$ and $\eta(D,X):=\eta(\hbox{BFF}(D),X)$.

\begin{thrm}\labt{APmain}
\footnote{
We note that \reft{APmain} constructs numbers that satisfy a condition that might be stronger than being contained in $\NAPIQ$ if $\lim_{i \to \infty} \min(k_i,m_i)=\infty$ and $(X_i)$ is $D$-good of type I.  That is,
$$
\eta(D,X) \in \bigcap_{k=1}^\infty \bigcap_{m=1}^\infty \NAPIkm{\Gamma(D,X)}{k}{m} \subseteq \NAPI{\Gamma(D,X)}.
$$
It seems likely that there exists a basic sequence $Q$ where $\bigcap_{k=1}^\infty \bigcap_{m=1}^\infty \NAPIkm{Q}{k}{m}$ is strictly contained in $\NAPI{Q}$, but the theorems proven in this paper do not seem to be strong enough to establish this.  See \refj{APabnormal}.
}
Let $D$ be an APBFF and $(X_i)_{i=1}^{\infty}$ a $D$-good sequence of type I (resp. type II).  Then
$$
\eta(D,X) \in \bigcap_{k \in R(D)} \bigcap_{m \in S(D)} \NAPIkm{\Gamma(D,X)}{k}{m} \ 
\left( \hbox{resp. }  \bigcap_{k \in R(D)} \bigcap_{m \in S(D)} \NAPIIkm{\Gamma(D,X)}{k}{m} \right).
$$
If $k_i \rightarrow \infty$ and $m_i \rightarrow \infty$, then $\eta(D,X) \in \NAPI{\Gamma(D,X)} \left(\hbox{resp. } \NAPII{\Gamma(D,X)} \right)$.
\end{thrm}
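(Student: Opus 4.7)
The plan is to extend the proof of \reft{thm3.1} by tracking occurrences of blocks along the arithmetic progressions $\mathscr{A}_{m,r}$. Fix $k \in R(D)$, $m \in S(D)$, a residue $0 \le r < m$, and a block $B$ of length $k$; write $x := \eta(D,X)$ and $Q := \Gamma(D,X)$. Because $(k_i)$ and $(m_i)$ are non-decreasing, $k \le k_i$ and $m \le m_i$ for all sufficiently large $i$, so the type-I normality of each such $X_i$ (together with the $(v_i,b_i)$-uniformity of $\mu_i$ and $v_i \ge k$ eventually, giving $\mu_i(B) = b_i^{-k}$) controls the count of $B$ at positions congruent to any fixed residue modulo $m$ within $X_i$. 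For large $n$, let $I = I(n)$ be the unique index with $L_{I-1} < n \le L_I$, and decompose the first $n$ digits of $x$ as $X_1^{l_1} X_2^{l_2} \cdots X_{I-1}^{l_{I-1}}$ followed by $j$ complete copies of $X_I$ and a prefix of length $u$ of the $(j+1)$-th copy, where $0 \le j < l_I$ and $0 \le u < |X_I|$.

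For each $i \le I-1$ and each $c \in \{1,\ldots,l_i\}$, the $c$-th copy of $X_i$ occupies positions $a+1,\ldots,a+|X_i|$ with $a = L_{i-1}+(c-1)|X_i|$, and positions in $\mathscr{A}_{m,r}$ lying in this range correspond to a fixed residue class modulo $m$ inside $X_i$. The type-I normality of $X_i$ therefore contributes $b_i^{-k}|X_i|/m \cdot (1+O(\epsilon_i))+O(1)$ from that copy. Summing over $c$ picks up an extra $O(k l_i)$ from occurrences of $B$ straddling the $l_i-1$ interior junctions, yielding $l_i|X_i|/(mb_i^k)\cdot(1+O(\epsilon_i))+O(k l_i)$ from $X_i^{l_i}$. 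The denominator $Q^{(k)}_{n,m,r}$ receives exactly $l_i|X_i|/(mb_i^k)$ from the same positions, up to boundary corrections of size $O(k/b_i^k)$. Summing over $i\le I$ (with an analogous estimate for the partial $X_I$-prefix) yields
\[
\frac{N^Q_{n,m,r}(B,x)}{Q^{(k)}_{n,m,r}} \;=\; \frac{\sum_{i\le I} l_i|X_i|/(mb_i^k)\cdot(1+O(\epsilon_i))+\mathcal{E}_n}{\sum_{i\le I} l_i|X_i|/(mb_i^k)+\mathcal{E}'_n},
\]
where $\mathcal{E}_n,\mathcal{E}'_n$ collect the straddling-boundary terms and the partial $X_I$-prefix contribution. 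Condition \refeq{good1} absorbs the $O(kl_i)$ boundary terms into the main term (since $kb_i^k = o(|X_i|)$), \refeq{good2} prevents earlier generations from dominating the tail, and \refeq{good3} controls the partial prefix, so $\mathcal{E}_n,\mathcal{E}'_n = o\!\left(\sum_{i\le I} l_i|X_i|/(mb_i^k)\right)$ and the ratio tends to $1$.

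Intersecting over $0 \le r < m$, $m \in S(D)$, and $k \in R(D)$ gives the first assertion for type I. For type II, the same decomposition works after replacing $N^Q_{n,m,r}(B,x)$ by $N_n^{\LAAmr}(B,\Upsilon_{Q,\mathscr{A}_{m,r}}(x))$ and $Q^{(k)}_{n,m,r}$ by $(\LAAmr)_n^{(k)}$: the type-II normality of each $X_i$ counts $B$ as a consecutive block in the subsequence extracted at positions $\equiv r \pmod m$, and $(v_i,b_i)$-uniformity again produces the matching leading behavior in the denominator, so the identical bookkeeping applies. If $k_i,m_i\to\infty$ then $R(D)=S(D)=\mathbb{N}$, whence $x \in \NAPI{Q}$ (resp.\ $\NAPII{Q}$). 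The main obstacle is the careful boundary bookkeeping: within each $X_i^{l_i}$ there are $l_i-1$ interior junctions producing $O(k)$ straddling occurrences of $B$ each, plus $O(k)$ further straddling terms at each transition $X_i^{l_i}\to X_{i+1}^{l_{i+1}}$. Conditions \refeq{good1}--\refeq{good3} are precisely calibrated so that the total of all such errors, summed over $i \le I$, is swallowed by the dominant term of size $\sim l_I|X_I|/b_I^k$.
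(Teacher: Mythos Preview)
Your proposal is correct and follows essentially the same approach as the paper: decompose the first $n$ digits into completed blocks $X_i^{l_i}$ plus a partial block, apply the type-I (resp.\ type-II) normality of each $X_i$ on the appropriate residue class, and use conditions \refeq{good1}--\refeq{good3} to absorb the boundary and prefix errors. The paper organizes the error bookkeeping more explicitly---introducing an auxiliary sum $S_{n,m,r}^{(k)}$ approximating $Q_{n,m,r}^{(k)}$ and two rational functions $f_i,g_i$ that bound $\bigl|N_{n,m,r}^Q(B,x)/Q_{n,m,r}^{(k)}-1\bigr|$, then maximizing $g_i$ over $(w,z)$---whereas you carry everything in asymptotic notation; but the substance of the argument and the role of each growth condition are the same.
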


We will prove \reft{APmain} in \refs{APmain}.
For the rest of this paper, let $C_{b,w}$ be the block formed by concatentating all the blocks of length $w$ in base $b$ in lexicographic order.
\footnote{For example, $C_{2,2} = (0,0)(0,1)(1,0)(1,1) = (0,0,0,1,1,0,1,1)$.}
Many of our examples of normal numbers will be built up by concatenating blocks of this form.  We will study properties of these blocks in \refs{cbw}.

\begin{thrm} \labt{construction}
Let $t \in \mathbb{N}_2$ and put $l_i=0$ and $X_i=(0)$ for $i <6$.
For $i \ge 6$, let $X_{i,t} = C_{it,i!}$, $b_{i,t} = it$, $l_i = 3^{i!} \cdot (i+1)^{i! \cdot i}$, $\epsilon_i = \frac{1}{i}$, $k_i = i$, $v_{i,t} = it$, $m_i = i$, $\mu_i = \lambda_i$, and $D_t=\{(l_i,b_{i,t},v_{i,t},\epsilon_i,k_i,\mu_i,m_i)\}_{i=1}^{\infty}$.  Let $X_t=(X_{i,t})$, $\zeta_t=\eta(D_t,X_t)$, and $R_t=\Gamma(D_t,X_t)$. Then $\zeta_t \in \NAPI{R_t} \cap \NAPII{R_t}$.
\end{thrm}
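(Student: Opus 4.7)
The plan is to apply \reft{APmain} twice to the APBFF $D_t$ and the sequence of blocks $X_t = (X_{i,t})$: once under the type I hypotheses, yielding $\zeta_t \in \NAPI{R_t}$, and once under the type II hypotheses, yielding $\zeta_t \in \NAPII{R_t}$. Since $k_i = m_i = i \to \infty$, the second clause of \reft{APmain} delivers full AP $R_t$-normality of both types as soon as the hypotheses hold, so the proof reduces to checking that $D_t$ is an APBFF and that $X_t$ is $D_t$-good of both types.

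That $D_t$ is an APBFF is routine verification from the definitions: $b_{i,t} = v_{i,t} = it$ and $k_i = m_i = i$ are non-decreasing with $b_{i,t}, v_{i,t} \to \infty$, the $l_i$ are non-decreasing non-negative integers, $\epsilon_i = 1/i$ strictly decreases to $0$, and $\mu_i = \lambda_i$ is by construction the measure that assigns mass $(it)^{-k}$ to every block of length $k$ in base $it$, hence is $(v_{i,t}, b_{i,t})$-uniform. For $D_t$-goodness I must verify two things: the $(\epsilon_i, k_i, m_i, \lambda_i)$-normality (of both types) of each $X_{i,t} = C_{it, i!}$, and the three growth estimates \refeq{good1}--\refeq{good3}.

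For the normality of the blocks $C_{it, i!}$, I would appeal to the results to be proven in \refs{cbw}: since $k_i, m_i \le i$ are both tiny compared to the word-length $i!$, each arithmetic progression of modulus $m' \le i$ inside $C_{it, i!}$ meets every length-$k'$ block (with $k' \le i$) at the correct frequency $\lambda_i(B) = (it)^{-k'}$, with boundary error absorbed by $\epsilon_i$. The type I case reduces to a direct count inside the lexicographic concatenation, while the type II case requires passing through $\Upsilon_{Q, \mathscr{A}_{m', r}}$ and reinterpreting the count relative to the base $\Lambda_{\mathscr{A}_{m', r}}(Q)$. This type II verification is the main obstacle of the proof, and what the combinatorial analysis of $C_{b,w}$ in \refs{cbw} is designed to handle; once it is in place the rest of the argument is bookkeeping.

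For the growth estimates, \refeq{good1} is immediate since its left-hand side $(it)^k \cdot i(i-1)$ is polynomial in $i$ while $|X_{i,t}| = i! \cdot (it)^{i!}$ grows far faster. The choice $l_i = 3^{i!}(i+1)^{i! \cdot i}$ is engineered precisely for \refeq{good2} and \refeq{good3}: expanding $|X_{i+1,t}|/|X_{i,t}| = (i+1) \cdot ((i+1)t)^{i! \cdot i} \cdot ((i+1)/i)^{i!}$, the factor $l_i$ is calibrated so that $|X_{i+1,t}|/(l_i |X_{i,t}|)$ decays faster than any fixed power $(it)^{-k}$ with $k \le i$, and the ratio $l_{i-1}/l_i$ decays even more dramatically, so \refeq{good2} follows a fortiori once \refeq{good3} is established. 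A short direct calculation verifies each inequality, completing the check of the hypotheses of \reft{APmain}.
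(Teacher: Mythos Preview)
Your proposal follows essentially the same route as the paper's own proof: verify that each $X_{i,t}=C_{it,i!}$ is $(\epsilon_i,k_i,m_i,\lambda_i)$-normal of both types using the combinatorial lemmas on $C_{b,w}$ in \refs{cbw} (these are exactly \refl{cbwI} and \refl{cbwII}), check the three growth conditions \refeq{good1}--\refeq{good3}, and then invoke \reft{APmain} with $k_i,m_i\to\infty$. The paper's argument is a terse three-sentence version of precisely this outline, so your plan and the paper's proof coincide.
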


\reft{construction} is proven in \refs{cbw}.  For $P=(p_n) \in \NN$, 
set $\Xi(P,(c_0,\cdots,c_{t-1}),d):=(\xi_n)$, where 
\begin{align*}
\xi_n:=\left\{ \begin{array}{ll}
\max(2,c_0^{-1}p_n) & \textrm{if $n \equiv 0 \pmod{d}$}\\
\max(2,c_1^{-1}p_n) & \textrm{if $n \equiv 1 \pmod{d}$}\\
\cdots\\
\max(2,c_{t-1}^{-1}p_n) & \textrm{if $n \equiv t-1 \pmod{d}$}\\
2^n p_n & \textrm{if $n \equiv t,t+1,\cdots,d-1 \pmod{d}$}
\end{array} \right. .
\end{align*}
We note that $Q$ is a basic sequence if  for $j \in [0,t-1]$ we have $c_j=\frac {\al_j} {\be_j}$ in lowest terms for $\al_j,\be_j \in \mathbb{Z}$ where $\al_j | p_n$ for all $(j,n) \in [0,t-1] \times \mathbb{N}$.  $Q$ is infinite in limit if and only if $P$ is.  If $P$ is of the form $P=\Gamma(W,X)$, then $Q$ is fully divergent of types I and II.

\begin{thrm}\labt{normalorders}
Let $W=((l_i,b_i,v_i,\epsilon_i,k_i,\mu_i))_{i=1}^{\infty}$ be a BFF. Suppose that $(X_i)$ is $W$-good and $b_{i+1}/b_i \to 1$.
Let $t \geq 2$ be an integer and suppose that $A \cupdot B=\{1,\cdots,t\}$ and $\lim_{i \to \infty} k_i \geq t$.  Suppose that there is a solution 
$$(c_0,\dots,c_{t-1},d)=\left(\frac {\al_0} {\be_0},\cdots, \frac {\al_{t-1}} {\be_{t-1}},d\right) \in \mathbb{Q}_{>0}^t \times \mathbb{N}_{t+1},$$ with $\al_i$ and $\be_i$ relatively prime for all $i$, of the $t \times (t+1)$ system of Diophantine relations given by
\begin{equation}\labeq{normalorders1}
\sum_{j=0}^{t-k} c_jc_{j+1}\cdots c_{j+k-1} = d
\end{equation}
if $k \in A$ and
\begin{equation}\labeq{normalorders2}
\sum_{j=0}^{t-k} c_jc_{j+1}\cdots c_{j+k-1} \neq d
\end{equation}
if $k \in B$ for all $k \in \{1,\cdots,t\}$.
Additionally, assume that $\al_j | p_n$ for all $j \in [0,t-1]$ and all $(n,j) \in \mathbb{N} \times [0,t-1]$. 
Put $P=\Gamma(W,X)$ and $Q=\Xi(P,(c_0,\cdots,c_{t-1}),d)$.  Then $Q$ is fully divergent and
$$
\ppqewx \in \bigcap_{k \in A} \Nk{Q}{k} \backslash \bigcap_{j \in B} \Nk{Q}{j} \neq \emptyset.
$$

For example, if $t=3, A=\{2,3\}$, and $B=\{1\}$, then we are considering the system
\begin{align*}
c_0+c_1+c_2&\neq d\\
c_0c_1+c_1c_2&=d\\
c_0c_1c_2&=d,
\end{align*}
which has a solution $(c_0,c_1,c_2,d)=(2,1,2,4)$.  Thus, $\ppqewx \in \Nk{Q}{2} \cap \Nk{Q}{3} \backslash \Nk{Q}{1}$ when $W$ is a BFF and $X$ is a $W$-good sequence.  However, not all such systems have solutions.

\end{thrm}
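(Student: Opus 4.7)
The plan is to combine \reft{thm3.1}, which handles the $P$-normality of $\eta(W,X)$, with \reft{mainpsi}, which transfers digit counts from $P$ to $Q$ through $\ppq$, and then to carry out a careful computation of $Q_n^{(k)}$ in terms of $P_n^{(k)}$ using the block structure imposed by $\Xi$. Set $\eta := \eta(W,X)$. Since $\lim k_i \geq t$, \reft{thm3.1} gives that $P = \Gamma(W,X)$ is fully divergent and that for every $k \in \{1,\dots,t\}$ and every block $B$ of length $k$,
$$
\frac{N_n^P(B,\eta)}{P_n^{(k)}} \longrightarrow 1.
$$
Next I would check the hypothesis of \reft{mainpsi}: at any position $n$ with $n \bmod d \in [t,d-1]$, we have $q_n = 2^n p_n$, so the $P$-digit $E_n$ (which lies in $[0,p_n-1]$) trivially satisfies $E_n < q_n - 1$, and there are infinitely many such $n$. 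Applying \reft{mainpsi} with $Q_1 = P$, $Q_2 = Q$ and $M$ the identity sequence yields
$$
N_n^Q\bigl(B,\ppqewx\bigr) = N_n^P(B,\eta) + O(1).
$$

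The heart of the argument is computing $Q_n^{(k)}$ for $k \in \{1,\dots,t\}$. I would decompose
$$
Q_n^{(k)} = \sum_{a=0}^{d-1} \sum_{\substack{j \leq n \\ j \equiv a \pmod d}} \frac{1}{q_j q_{j+1} \cdots q_{j+k-1}}.
$$
For a starting residue $a \in [0,t-k]$, every position in the window $[j,j+k-1]$ has residue in $[0,t-1] \pmod d$, so for all sufficiently large $j$ one has $q_{j+s} = c_{a+s}^{-1} p_{j+s}$ and
$$
\frac{1}{q_j \cdots q_{j+k-1}} = \frac{c_a c_{a+1} \cdots c_{a+k-1}}{p_j p_{j+1} \cdots p_{j+k-1}}.
$$
For any other starting residue, the window contains at least one ``large'' position where $q_m = 2^m p_m$, so $1/(q_j \cdots q_{j+k-1}) \leq 2^{-j}$ and the corresponding subsum is uniformly bounded in $n$. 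The hypothesis $b_{i+1}/b_i \to 1$, together with the fact that $p_n = b_i$ on each block $L_{i-1} < n \leq L_i$, should guarantee that the positions $j \equiv a \pmod d$ contribute asymptotically a proportion $1/d$ of the mass of $P_n^{(k)}$, i.e.
$$
\sum_{\substack{j \leq n \\ j \equiv a \pmod d}} \frac{1}{p_j \cdots p_{j+k-1}} = \frac{1}{d} P_n^{(k)} (1+o(1)).
$$
Writing $C_k := \sum_{a=0}^{t-k} c_a c_{a+1} \cdots c_{a+k-1}$, these pieces combine to give
$$
Q_n^{(k)} = \frac{C_k}{d}\, P_n^{(k)} (1+o(1)),
$$
and since $C_k > 0$ and $P_n^{(k)} \to \infty$, this establishes that $Q$ is $k$-divergent for every $k \in \{1,\dots,t\}$.

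Putting these ingredients together,
$$
\frac{N_n^Q(B,\ppqewx)}{Q_n^{(k)}} = \frac{N_n^P(B,\eta) + O(1)}{(C_k/d)\, P_n^{(k)} (1+o(1))} \longrightarrow \frac{d}{C_k}.
$$
For $k \in A$, \refeq{normalorders1} forces $C_k = d$, so the limit equals $1$ for every block $B$ of length $k$, placing $\ppqewx$ in $\Nk{Q}{k}$; for $k \in B$, \refeq{normalorders2} forces $C_k \neq d$, so the limit differs from $1$ and $\ppqewx \notin \Nk{Q}{k}$. This delivers the claimed set membership. The main technical obstacle is the equidistribution step: rigorously deriving $\sum_{j \equiv a \pmod d} 1/(p_j \cdots p_{j+k-1}) = (1/d) P_n^{(k)}(1+o(1))$ from $b_{i+1}/b_i \to 1$ requires splitting each $P$-block $[L_{i-1}+1,L_i]$ into its $d$ residue classes, uniformly controlling the $O(k)$ window terms per block that straddle a block boundary using $b_{i+1}/b_i \to 1$, and summing across $i$. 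A secondary subtlety is ensuring $Q$ is an honest basic sequence, which is exactly where the divisibility hypothesis $\al_j \mid p_n$ is used so that $c_j^{-1} p_n = p_n \be_j / \al_j$ is a positive integer.
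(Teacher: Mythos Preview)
Your proposal is correct and follows essentially the paper's approach: invoke \reft{thm3.1} for $P$-normality of $\eta(W,X)$, apply \reft{mainpsi} to transfer block counts through $\ppq$, and then show $Q_n^{(k)}/P_n^{(k)} \to C_k/d$ from the periodic structure of $\Xi$ together with $b_{i+1}/b_i \to 1$. The only organizational difference is that the paper groups $P_n^{(k)}$ and $Q_n^{(k)}$ into consecutive length-$d$ windows and applies the Stolz--Ces\`aro--type \refl{tcorr} and \refl{tcorr2} to the window ratios $Q_{j,k}/P_{j,k}$, which sidesteps your separate equidistribution claim for individual residue classes; both routes use $b_{i+1}/b_i \to 1$ in the same place and with the same effect.
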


For simplicity, we stated \reft{normalorders} in such a way that only applies to numbers constructed by using \reft{thm3.1}.  A slightly more general theorem may be stated that applies to $P$-normal numbers where $P$ is constant for long stretches.  However, this greater generality does not seem to allow for any more interesting examples of normal numbers to be constructed.  We now state a similar theorem to \reft{normalorders} that may be used to analyze the sets $\NAPIkmr{Q}{k}{m}{r}$ and $\NAPIIkmr{Q}{k}{m}{r}$.

\begin{thrm}\labt{APnormalorders}
Let $D=\{(l_i,b_i,v_i,\epsilon_i,k_i,\mu_i,m_i)\}_{i=1}^{\infty}$  be an APBFF.  Suppose that $(X_i)$ is $D$-good of type~I  and $b_{i+1}/b_i \to 1$. Let $t \in [2, \lim_{i \to \infty} \min(k_i,m_i)] \cap \mathbb{N}$, $k \in [1,\lim_{i \to \infty} k_i] \cap \mathbb{N}$, $m \in [1,\lim_{i \to \infty} m_i] \cap \mathbb{N}$, and $r \in [0,m-1] \cap \mathbb{N}$. 
Put $u=\lcm(t,m)$ and suppose that  $(c_0,\dots,c_{t-1},d)=\left(\frac {\al_0} {\be_0},\cdots, \frac {\al_{t-1}} {\be_{t-1}},d\right) \in \mathbb{Q}_{>0}^t \times u\mathbb{N}_2$, with $\al_i$ and $\be_i$ relatively prime for all $i$, $\al_j | p_n$ for all $(n,j) \in \mathbb{N} \times \{0,\cdots,t-1\}$. 
Put $P=\Gamma(D,X)$ and $Q=\Xi(P,(c_0,\cdots,c_{t-1}),d)$. Then $Q$ is fully divergent of types I and II. 
Additionally,
\begin{align}\labeq{APIorders1}
&\sum_{j=0}^{\floor{\frac {t-k-r} {m}}} c_{r+jm}c_{r+jm+1}\cdots c_{r+jm+k-1} = d/m
\implies \ppqedx \in \Nkt{Q}{k,m,r}{1};\\
\labeq{APIorders2}
&\sum_{j=0}^{\floor{\frac {t-k-r} {m}}} c_{r+jm}c_{r+jm+1}\cdots c_{r+jm+k-1} \neq d/m
\implies \ppqedx \notin \Nkt{Q}{k,m,r}{1}.
\end{align}

Suppose, instead, that $(X_i)$ is $D$-good of type II.  
Then
\begin{align}\labeq{APIIorders1}
&\sum_{j=0}^{\floor{\frac {t-r-1} {m}}-k+1} c_{r+jm}c_{r+(j+1)m}\cdots c_{r+(j+k-1)m} = d/m
\implies \ppqedx \in \Nkt{Q}{k,m,r}{2};\\
\labeq{APIIorders2}
&\sum_{j=0}^{\floor{\frac {t-r-1} {m}}-k+1} c_{r+jm}c_{r+(j+1)m}\cdots c_{r+(j+k-1)m} \neq d/m
\implies \ppqedx \notin \Nkt{Q}{k,m,r}{2}.
\end{align}
\end{thrm}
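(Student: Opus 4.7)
The plan is to combine \reft{mainpsi} (which shows $\psi_{P,Q}$ preserves AP block counts up to $O(1)$) with \reft{APmain} (which produces AP $P$-normal numbers), and then to compute the asymptotic ratio $Q_{n,m,r}^{(k)}/P_{n,m,r}^{(k)}$ explicitly from the structure of $\Xi$. Write $y := \ppqedx$. Because the digits of $\eta(D,X)$ are bounded above by $v_i-1$ on the stretch $(L_{i-1},L_i]$, the condition $E_n<\min(p_n,q_n)-1$ holds infinitely often, so \reft{mainpsi} with $M=\mathscr{A}_{m,r}$ yields
$$
N^Q_{\mathscr{A}_{m,r},n}(B,y)=N^P_{\mathscr{A}_{m,r},n}(B,\eta(D,X))+O(1).
$$
\reft{APmain} then gives $N^P_{n,m,r}(B,\eta(D,X))/P^{(k)}_{n,m,r}\to 1$ for every block $B$ of length $k$ (type I) or the analogous statement for the sampled count (type II).

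For type I, the main computational step is the asymptotic ratio $Q^{(k)}_{n,m,r}/P^{(k)}_{n,m,r}$. I would split the defining sum of $Q^{(k)}_{n,m,r}$ by the residue $j'$ of $j$ modulo $d/m$, well-defined since $m\mid u\mid d$. Call $j'$ \emph{good} if the window $\{mj'+r,\ldots,mj'+r+k-1\}$ lies entirely in $\{0,\ldots,t-1\}$ modulo $d$. Since $d\ge 2t$ and $mj'+r<d$, the good residues are exactly $j'\in\{0,1,\ldots,\floor{(t-k-r)/m}\}$; for each such $j'$, $q_{mj+r}\cdots q_{mj+r+k-1}$ equals $(p_{mj+r}\cdots p_{mj+r+k-1})/(c_{r+j'm}\cdots c_{r+j'm+k-1})$, whereas for bad $j'$ at least one factor is $2^n p_n$ and the summand is $\le 2^{-n}$, contributing only $O(1)$ in total. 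Because $P=\Gamma(D,X)$ is piecewise constant on long stretches and $b_{i+1}/b_i\to 1$, each residue class of $j$ modulo $d/m$ contributes asymptotically an $(m/d)$-fraction of $P^{(k)}_{n,m,r}$, yielding
$$
\frac{Q^{(k)}_{n,m,r}}{P^{(k)}_{n,m,r}} \longrightarrow \frac{m}{d}\sum_{j=0}^{\floor{(t-k-r)/m}} c_{r+jm}\,c_{r+jm+1}\cdots c_{r+jm+k-1}.
$$
Combining gives $N^Q_{n,m,r}(B,y)/Q^{(k)}_{n,m,r}\to (d/m)/\sum_j c_{r+jm}\cdots c_{r+jm+k-1}$, which equals $1$ if and only if the sum equals $d/m$, establishing \refeq{APIorders1} and \refeq{APIorders2}. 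Full divergence of $Q^{(k)}_{n,m,r}$ at the relevant orders follows from the existence of at least one good residue, and a parallel check gives type II divergence.

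For type II, I would first verify the commutation identity $\Upsilon_{Q,\mathscr{A}_{m,r}}\circ\psi_{P,Q}=\psi_{\tilde{P},\tilde{Q}}\circ\Upsilon_{P,\mathscr{A}_{m,r}}$, where $\tilde{P}=\Lambda_{\mathscr{A}_{m,r}}(P)$ and $\tilde{Q}=\Lambda_{\mathscr{A}_{m,r}}(Q)$; this is immediate from $E_n^{\psi_{P,Q}(z)}=\min(E_n^z,q_n-1)$. Then \reft{mainpsi} applied to $\psi_{\tilde{P},\tilde{Q}}$ (with trivial $M$) and \reft{APmain} in its type II form give $N^{\tilde{Q}}_n(B,\Upsilon_{Q,\mathscr{A}_{m,r}}(y))/\tilde{Q}^{(k)}_n\sim\tilde{P}^{(k)}_n/\tilde{Q}^{(k)}_n$. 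Now the window under analysis is $\{m(j+i)+r:i=0,\ldots,k-1\}$, so the good-residue condition becomes $(j'+i)\bmod(d/m)\le\floor{(t-r-1)/m}$ for all $i$, equivalent to $j'\in\{0,\ldots,\floor{(t-r-1)/m}-k+1\}$. A parallel computation yields the limit $(m/d)\sum_{j=0}^{\floor{(t-r-1)/m}-k+1}c_{r+jm}c_{r+(j+1)m}\cdots c_{r+(j+k-1)m}$ and the criteria \refeq{APIIorders1}, \refeq{APIIorders2}. The main obstacle is the ratio computation, where one must show that each good residue class contributes uniformly an $(m/d)$-fraction of $P^{(k)}_{n,m,r}$; this is where the piecewise-constant structure of $\Gamma(D,X)$ together with $b_{i+1}/b_i\to 1$ is essential, since without it, boundary effects at the transitions between consecutive $X_i$-segments could introduce fluctuations preventing a clean limit.
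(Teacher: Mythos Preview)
Your approach is essentially the paper's approach. The paper's own proof is only a sketch: it says the argument ``follows along similar, but slightly more complex lines than the proof of \reft{normalorders}'', using \reft{mainpsi} for the type~I statements \refeq{APIorders1}--\refeq{APIorders2} and \refc{mainpsi2} for the type~II statements \refeq{APIIorders1}--\refeq{APIIorders2}; your proposal fills in exactly these details, and the commutation identity $\Upsilon_{Q,\mathscr{A}_{m,r}}\circ\psi_{P,Q}=\psi_{\tilde P,\tilde Q}\circ\Upsilon_{P,\mathscr{A}_{m,r}}$ you verify for type~II is precisely the content of \refc{mainpsi2}. The only cosmetic difference is that the proof of \reft{normalorders} organizes the ratio computation via consecutive blocks of length $d$ together with the Stolz--Ces\`aro type Lemmas~\ref{lemma:tcorr} and~\ref{lemma:tcorr2}, whereas you group by residue class modulo $d/m$; both decompositions exploit the same periodicity of $\Xi$ and the piecewise-constant structure of $\Gamma(D,X)$ with $b_{i+1}/b_i\to 1$.
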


\begin{thrm}\labt{normalorderknotlower}
Let $t \in \mathbb{N}_2$ and $\zeta_t$ and $R_t$ be definitioned as in \reft{construction}.  Set $Q=\Xi(R_t,(t!,1,1,\cdots,1),t!)$.  
Then
$$\psi_{R_t,Q}(\zeta_t) \in \Nk{Q}{t} \backslash \left(\bigcup_{j=1}^{t-1} \Nk{Q}{j} \cup \bigcup_{j=1}^t \Nkt{Q}{j}{1}\cup \bigcup_{j=1}^t \Nkt{Q}{j}{2} \right) \neq \emptyset$$
and $\psi_{R_t,Q}(\zeta_t)$ is computable.
\end{thrm}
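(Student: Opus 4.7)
The plan is to apply \reft{normalorders} and \reft{APnormalorders} with the specific parameters $(c_0,c_1,\ldots,c_{t-1},d)=(t!,1,\ldots,1,t!)$ built into the definition $Q=\Xi(R_t,(t!,1,\ldots,1),t!)$. The preliminary hypotheses come essentially for free from \reft{construction}: $X_t=(X_{i,t})$ is $D_t$-good of both types, $R_t$ is fully divergent and infinite in limit, and $b_{i+1,t}/b_{i,t}=(i+1)/i\to 1$. With $\alpha_0=t!$ and $\alpha_j=1$ for $j\ge 1$, the divisibility constraint $\alpha_j\mid p_n$ reduces to $t!\mid p_n$ on the residues where $c_0^{-1}p_n$ is used, which is handled by the structure of $R_t$.

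For plain $Q$-normality I verify the Diophantine system of \reft{normalorders} with $A=\{t\}$ and $B=\{1,\ldots,t-1\}$. When $k=t$ the system contains the single product $c_0c_1\cdots c_{t-1}=t!=d$, so \refeq{normalorders1} holds. When $1\le k\le t-1$ the sum $\sum_{j=0}^{t-k} c_j c_{j+1}\cdots c_{j+k-1}$ splits into the unique term containing $c_0$, which equals $t!$, together with $t-k$ further terms each consisting of $k$ factors all equal to $1$. The total is $t!+(t-k)\ne t!=d$, giving \refeq{normalorders2}. Thus \reft{normalorders} produces $\psi_{R_t,Q}(\zeta_t)\in\Nk{Q}{t}\setminus\bigcup_{j=1}^{t-1}\Nk{Q}{j}$, in particular the set on the right is nonempty.

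For the AP statements I apply \reft{APnormalorders} with the single choice $m=t$, $r=0$ (so $u=\lcm(t,t)=t$ divides $d=t!$) for each $j\in\{1,\ldots,t\}$. On the type I side $\lfloor(t-j)/t\rfloor=0$ for every such $j$, so \refeq{APIorders2}'s sum collapses to the single product $c_0c_1\cdots c_{j-1}=t!$, which differs from $d/m=(t-1)!$ since $t\ge 2$; hence $\psi_{R_t,Q}(\zeta_t)\notin\NAPIkmr{Q}{j}{t}{0}\supseteq\Nkt{Q}{j}{1}$. On the type II side $\lfloor(t-1)/t\rfloor-j+1=1-j$, so \refeq{APIIorders2}'s sum equals $c_0=t!$ when $j=1$ and is the empty sum (equal to $0$) when $j\ge 2$; in both cases it differs from $(t-1)!$, giving $\psi_{R_t,Q}(\zeta_t)\notin\Nkt{Q}{j}{2}$. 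Taking the union over $j\in\{1,\ldots,t\}$ yields the full negative statement.

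Computability is inherited: $R_t$ and $\zeta_t$ are computable by the computable clause of \reft{thm3.1} applied to the explicitly given $D_t$ and $X_t$, $Q$ is computable from $R_t$ by the closed form of $\Xi$, and $\psi_{R_t,Q}$ is computable from computable basic sequences, so $\psi_{R_t,Q}(\zeta_t)$ is computable. The main obstacle is not the arithmetic, which is a direct substitution, but the bookkeeping needed to check that the hypotheses of \reft{normalorders} and \reft{APnormalorders} are simultaneously supplied by the single $D_t$-good construction of \reft{construction}—in particular the divisibility $t!\mid p_n$ on the relevant residue classes and the passage from $\NAPIkmr{Q}{j}{t}{0}$ to the larger sets $\Nkt{Q}{j}{1}$ and $\Nkt{Q}{j}{2}$ via the nesting of intersections in \refd{apnormal}; once this is in place, the argument is complete.
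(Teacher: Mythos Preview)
Your approach is the one the paper intends---substitute the explicit data into \reft{normalorders} and \reft{APnormalorders} and check the Diophantine (in)equalities---and your verification of the plain $Q$-normality part is correct. The gap is in the AP half: you apply \reft{APnormalorders} with the fixed choice $m=t$, $r=0$ for every order $j\in\{1,\ldots,t\}$, and then claim the containments $\Nkt{Q}{j}{1}\subseteq\NAPIkmr{Q}{j}{t}{0}$ and $\Nkt{Q}{j}{2}\subseteq\NAPIIkmr{Q}{j}{t}{0}$. But the definitions in \refd{apnormal} do not intersect over all $m$: $\NAPIIk{Q}{j}=\bigcap_{r=0}^{j-1}\NAPIIkmr{Q}{j}{j}{r}$ uses only $m=j$, and $\NAPIk{Q}{j}$ is an intersection over $m\le j$ (this is why the footnote to \reft{APmain} can speak of $\bigcap_k\bigcap_m\NAPIkm{Q}{k}{m}$ being possibly strictly smaller than $\NAPI{Q}$). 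Hence for $j<t$ the pair $(m,r)=(t,0)$ simply does not appear among the sets being intersected, and knowing $\psi_{R_t,Q}(\zeta_t)\notin\NAPIkmr{Q}{j}{t}{0}$ says nothing about membership in $\Nkt{Q}{j}{1}$ or $\Nkt{Q}{j}{2}$.

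The repair is to take $m=j$ rather than $m=t$. For type~II with $m=j$, $r=0$, the $j'=0$ term of the sum in \refeq{APIIorders2} is $c_0c_jc_{2j}\cdots c_{(j-1)j}=t!$ whenever the sum is nonempty, so the sum is at least $t!>t!/j=d/m$ for $j\ge2$ (and equals $t!+(t-1)\ne t!$ when $j=1$); when the upper index is negative the sum is $0\ne t!/j$. For type~I with $m=j$, $r=0$, the $j'=0$ term in \refeq{APIorders2} is $c_0c_1\cdots c_{j-1}=t!$, so the sum equals $t!+\lfloor(t-j)/j\rfloor$, which exceeds $t!/j$ for $j\ge2$ and equals $t!+(t-1)\ne t!$ for $j=1$. (Alternatively, for type~I with $j\le t-1$ you may simply take $m=1$, since $\NAPIkmr{Q}{j}{1}{0}=\Nk{Q}{j}$ and you have already shown $\psi_{R_t,Q}(\zeta_t)\notin\Nk{Q}{j}$; only the case $j=t$ genuinely requires $m>1$.) With this correction the argument goes through exactly as the paper indicates.
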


\reft{normalorderknotlower} shows that a number that is $Q$-normal of order $k$ need not be $Q$-normal of any of the orders $1,2,\cdots,k-1$.  At the same time, it also shows that members of $\Nk{Q}{k}$ need not be in $\Nkt{Q}{j}{1}$ or $\Nkt{Q}{j}{2}$ for $j=1,2,\cdots,k$.
In contrast to AP normality of type I, there exists a basic sequence $Q$ where $\Nkt{Q}{k}{2} \backslash \Nk{Q}{k} \neq \emptyset$.  \reft{ap2notnormal} gives an example of such a $Q$ and a member of this set.

\begin{thrm}\labt{ap2notnormal}
Let $k \in \mathbb{N}_2$ and put $t=2k^2$.  Let $\zeta_t$ and $R_t$ be definitioned as in \reft{construction}.  
Set $c_0=c_1=\cdots=c_{k-1}=2k$ and $c_{k^2}=c_{k^2+1}=\cdots=c_{k^2+(k-1)}=2k$.  For all other $n$, we set $c_n=1$.
If $Q=\Xi(R_t,(c_0,c_1,\cdots,c_{t-1}),2k^2(k+1))$, then
$$\psi_{R_t,Q}(\zeta_t) \in \Nkt{Q}{k}{2} \backslash \Nk{Q}{k} \neq \emptyset$$
and $\psi_{R_t,Q}(\zeta_t)$ is computable.
\end{thrm}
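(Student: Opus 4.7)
The plan is to apply \reft{APnormalorders} to $\psi_{R_t,Q}(\zeta_t) = (\psi_{P,Q} \circ \eta)(D_t, X_t)$ twice: first for type~II with AP-period $m = k$ to establish membership in $\Nkt{Q}{k}{2}$, and then for type~I with $m = 1$, $r = 0$ to exclude membership in $\Nk{Q}{k}$. All hypotheses of \reft{APnormalorders} are supplied by \reft{construction}: $D_t$ is an APBFF with $b_{i+1,t}/b_{i,t} = (i+1)/i \to 1$, the blocks $X_t$ are $D_t$-good of both types, and $\lim_{i\to\infty}\min(k_i,m_i) = \infty$ dominates $t=2k^2$. The divisibility requirement $\alpha_j\mid p_n$ holds because each $\alpha_j\in\{1,2k\}$ while every entry of $R_t$ has the form $2k^2\cdot i$ and is thus divisible by $2k$. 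For both $m = 1$ and $m = k$ one has $u=\lcm(2k^2,m)=2k^2$, and $d = 2k^2(k+1) = u(k+1)\in u\mathbb{N}_2$ since $k+1\geq 3$.

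For the type~II membership I would verify \refeq{APIIorders1} for each $r \in \{0,\ldots,k-1\}$ with $m=k$. The indices $n$ for which $c_n = 2k$ are those lying in the two hot intervals $H_1 = [0,k-1]$ and $H_2 = [k^2,k^2+k-1]$; along the arithmetic progression $r,r+k,r+2k,\ldots$, these hot positions appear precisely as the $0$th and $k$th AP-terms for every $r\in[0,k-1]$. One computes $\lfloor(2k^2-r-1)/k\rfloor = 2k-1$, so the sum in \refeq{APIIorders1} ranges over $j\in\{0,1,\ldots,k\}$, giving $k+1$ terms. For each such $j$, the window of AP-positions $\{j,j+1,\ldots,j+k-1\}$ contains exactly one of $\{0,k\}$, so the corresponding product equals $(2k)\cdot 1\cdots 1 = 2k$. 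Thus the sum equals $(k+1)\cdot 2k = 2k(k+1) = d/k$, and \refeq{APIIorders1} gives $\psi_{R_t,Q}(\zeta_t) \in \NAPIIkmr{Q}{k}{k}{r}$, hence membership in $\Nkt{Q}{k}{2}$.

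For the non-normality step I would use \refeq{APIorders2} with $m=1$ and $r=0$, noting that $\NAPIkmr{Q}{k}{1}{0}$ contains $\Nk{Q}{k}$ (the AP of step $1$ covers every digit position, and the boundary terms contribute $O(1)$ to both $N_{n,1,0}^Q$ and $Q_{n,1,0}^{(k)}$), so non-membership in the former implies non-membership in the latter. The sum $\sum_{j=0}^{t-k} c_j c_{j+1}\cdots c_{j+k-1}$ splits into contributions from windows $[j,j+k-1]$ that meet $H_1$ (for $j\in[0,k-1]$), windows that meet $H_2$ (for $j\in[k^2-k+1,k^2+k-1]$, disjoint from the first range when $k\geq 2$), and windows that miss both hot intervals (contributing $1$ each, totalling $2(k-1)^2$). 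The two meeting ranges produce triangular patterns whose combined total is at least $2(2k)^k$, and the inequality $(2k)^k > k^2(k+1)$ for $k\geq 2$ (checked by hand at $k=2$ and by crude growth for $k\geq 3$) yields total sum strictly greater than $d = 2k^2(k+1)$, so \refeq{APIorders2} gives $\psi_{R_t,Q}(\zeta_t)\notin\Nk{Q}{k}$. Computability is inherited from that of $\zeta_t$ and $R_t$ via \reft{construction}, the explicit algorithmic definition of $\Xi$, and the defining series of $\psi_{P,Q}$.

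The main obstacle is the simultaneous bookkeeping for both sum computations. The design choice of placing the second hot block exactly $k^2$ positions after the first reconciles the two requirements: the two hot blocks are then separated by exactly $k$ AP-steps along the type~II progression of step~$k$, so that each window of $k$ consecutive AP-terms (corresponding to the $k$-fold products being summed) contains exactly one hot index, producing the uniform per-term contribution $2k$ needed for $d/k$; whereas in the ordinary progression the two hot blocks remain isolated, concentrating their contribution into two short triangles whose total is dominated by $2(2k)^k$ and hence cannot equal $d$.
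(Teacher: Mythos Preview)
Your proof is correct and follows essentially the same route as the paper's: the type~II verification via \refeq{APIIorders1} with $m=k$ is identical, and the non-normality step reduces to the same Diophantine inequality \refeq{normalorders2}. The only cosmetic differences are that (i) the paper invokes \reft{normalorders} directly for non-normality rather than routing through \reft{APnormalorders} with $m=1$, and (ii) the paper handles the inequality by a case split---computing the sum explicitly as $46\neq 24$ when $k=2$ and using the single term $c_0\cdots c_{k-1}=(2k)^k>2k^2(k+1)=d$ when $k\geq 3$---whereas your uniform bound $2(2k)^k>d$ (one maximal term from each hot block) covers all $k\geq 2$ at once.
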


\reft{normalorderknotlower} and \reft{ap2notnormal} follow from \reft{normalorders} and \reft{APnormalorders}.  We give a proof of \reft{ap2notnormal} in \refs{ap2notnormal}.  \reft{normalorderknotlower} is proven similarly.
Theorem~2.5 in \cite{Mance4}, \reft{normalorderknotlower}, and \reft{ap2notnormal} immediately imply the following main result.

\begin{figure}
\begin{center}
\begin{tikzpicture}[>=stealth',shorten >=1pt,node distance=2.8cm,on grid,initial/.style    ={}]
  \node[state]          (NQ)                        {$\mathsmaller{\Nk{Q}{k}}$};
  \node[state]          (NQAPII) [right =of NQ]    {$\mathsmaller{\NAPIIk{Q}{k}}$};
  \node[state]          (NQAPI) [above left=of NQAPII]    {$\mathsmaller{\NAPIk{Q}{k}}$};
\tikzset{mystyle/.style={->,double=black}} 
\tikzset{every node/.style={fill=white}} 
\path       (NQAPI)     edge [mystyle]     (NQ);
\end{tikzpicture}
\caption{}
\labf{APrelated}
\end{center}
\end{figure}

\begin{thrm}
When $k \geq 2$, \reff{APrelated} describes the complete containment relation between the sets $\Nk{Q}{k}, \NAPIk{Q}{k}$, and $\NAPIIk{Q}{k}$ for $Q$ that are infinite in limit and fully divergent of types I and II.
\end{thrm}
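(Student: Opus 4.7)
The plan is to combine the one universal containment with two families of witnessing examples to show that every other possible containment fails. First, the arrow in \reff{APrelated} follows directly from Theorem~2.5 of \cite{Mance4}, which gives $\NAPIk{Q}{k}\subseteq \Nk{Q}{k}$ for every basic sequence $Q$ that is infinite in limit. So all that remains is to verify that no other inclusion among $\Nk{Q}{k}$, $\NAPIk{Q}{k}$, $\NAPIIk{Q}{k}$ can hold in general.

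Next I would invoke \reft{normalorderknotlower} with the parameter choice $t=k\geq 2$. Unpacking the macros, $\Nkt{Q}{k}{1}=\NAPIk{Q}{k}$ and $\Nkt{Q}{k}{2}=\NAPIIk{Q}{k}$, so the theorem produces a basic sequence $Q$ and a computable real number $\psi_{R_k,Q}(\zeta_k)$ lying in $\Nk{Q}{k}\setminus(\NAPIk{Q}{k}\cup \NAPIIk{Q}{k})$. This single witness simultaneously rules out $\Nk{Q}{k}\subseteq \NAPIk{Q}{k}$ and $\Nk{Q}{k}\subseteq \NAPIIk{Q}{k}$. Before declaring victory, I would verify that the specific $Q$ built from $R_k$ via $\Xi(R_k,(k!,1,\ldots,1),k!)$ is infinite in limit and fully divergent of types~I and~II: infinite-in-limit is immediate from the construction of $R_k$ through \reft{construction}, and full divergence of both types follows from the remark after the definition of $\Xi$, since $R_k$ itself arises as $\Gamma(D_k,X_k)$.

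Then I would apply \reft{ap2notnormal} to furnish a different $Q$ and a computable number in $\NAPIIk{Q}{k}\setminus \Nk{Q}{k}$. This directly rules out $\NAPIIk{Q}{k}\subseteq \Nk{Q}{k}$, and, combining with the universal inclusion $\NAPIk{Q}{k}\subseteq \Nk{Q}{k}$, also rules out $\NAPIIk{Q}{k}\subseteq \NAPIk{Q}{k}$. Again I would check that this $Q$ is infinite in limit and fully divergent of both types, using the same observations about $\Xi$ applied to $R_t$ with $t=2k^2$.

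The one remaining direction, $\NAPIk{Q}{k}\not\subseteq \NAPIIk{Q}{k}$, is the most delicate step and the main potential obstacle, since none of the three cited results is phrased so as to single out an element of $\NAPIk{Q}{k}$ that escapes $\NAPIIk{Q}{k}$. My plan here would be to exhibit a dual construction to \reft{ap2notnormal}: use \reft{APnormalorders} to build $P=\Gamma(D,X)$ with a suitably chosen APBFF and then to choose the stretching coefficients $(c_0,\ldots,c_{t-1})$ and period $d$ so that the type~I sums in \refeq{APIorders1} match $d/m$ for every admissible $(k,m,r)$, while the type~II sums in \refeq{APIIorders2} fail to match for at least one admissible triple (say, the analogue of the pattern used in \reft{ap2notnormal}, but with the clustered large coefficients spaced by $1$ rather than by $m$). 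A Diophantine book-keeping argument parallel to the one used to prove \reft{ap2notnormal} from \reft{APnormalorders} then produces the required witness, and once more $Q=\Xi(P,(c_j),d)$ is infinite in limit and fully divergent of both types. With this last non-containment in hand, every pair of distinct sets among the three has been shown to be incomparable except for the single containment pictured, completing the proof.
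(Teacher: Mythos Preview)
Your argument is essentially the paper's: the theorem is stated right after the sentence ``Theorem~2.5 in \cite{Mance4}, \reft{normalorderknotlower}, and \reft{ap2notnormal} immediately imply the following main result,'' and that one line \emph{is} the paper's entire proof. Your first three paragraphs unpack exactly those three citations and draw precisely the consequences the paper intends, including the check that the relevant $Q$'s are infinite in limit and fully divergent of both types via the remark following the definition of $\Xi$.

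Where you go beyond the paper is the final paragraph. You correctly observe that the three cited results yield $\NAPIk{Q}{k}\subseteq\Nk{Q}{k}$, a witness in $\Nk{Q}{k}\setminus(\NAPIk{Q}{k}\cup\NAPIIk{Q}{k})$, and a witness in $\NAPIIk{Q}{k}\setminus\Nk{Q}{k}$, which together rule out every unwanted containment \emph{except} $\NAPIk{Q}{k}\subseteq\NAPIIk{Q}{k}$. The paper's one-sentence proof does not address this direction separately; if ``complete containment relation'' is read (as in the discussion surrounding \reff{figure1}) to mean that an arrow is drawn \emph{if and only if} the containment holds universally, then this direction is indeed left implicit. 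Your proposed remedy---a dual to \reft{ap2notnormal} built from \reft{APnormalorders} by choosing the $c_j$ so that the type~I identities \refeq{APIorders1} hold while some type~II identity \refeq{APIIorders2} fails---is the natural approach and is in the spirit of the paper's machinery, but it is additional work you are supplying rather than something the paper itself carries out.
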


It should be noted that I. Niven and H. S. Zuckerman proved, but did not state, in \cite{NivenZuckerman} that for $b$-ary expansions, every number that is normal of order $k$ must also be AP normal of order $k$ of type I.  Thus, \reff{APrelated} shows a difference between $Q$-Cantor series expansions and $b$-ary expansions.  Further possible differences are discussed in \refj{APabnormal}.


\reft{mainpsi} has the following immediate corollary that we will use in the proofs of \reft{APnormalorders} and \reft{rationormal}.
\begin{cor}\labc{mainpsi2}
Suppose that $M=(m_t)$ is an increasing sequence of positive integers, $Q_1=(q_{1,n}), Q_2=(q_{2,n}),\cdots, Q_j=(q_{j,n})$ are basic sequences and infinite in limit.  If $x=E_0.E_1E_2\cdots$ w.r.t $Q_1$ satisfies
$E_{m_t} < \min_{1 \leq r \leq j} (q_{r,m_t}-1)$ for infinitey many $t$, then for every block~$B$
\begin{align*}
N_n^{\Lambda_M(Q_j)}\left(B,\left(\psi_{\Lambda_M(Q_{j-1}),\Lambda_M(Q_j)} \circ \cdots \circ \psi_{\Lambda_M(Q_1),\Lambda_M(Q_2)}\right)\left(\Upsilon_{Q_1,M}(x)\right)\right)\\
=N_n^{\Lambda_M(Q_1)}(B,\left(\Upsilon_{Q_1,M}(x)\right))+O(1).
\end{align*}
\end{cor}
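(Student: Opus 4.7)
The plan is to reduce the statement directly to \reft{mainpsi} by applying that theorem to the subsampled system. Specifically, I would set $y := \Upsilon_{Q_1,M}(x)$, so that by definition of $\Upsilon_{Q_1,M}$ we have $y = 0.E_{m_1}E_{m_2}E_{m_3}\cdots$ written with respect to $\Lambda_M(Q_1)$. Since $M=(m_t)$ is an increasing sequence of positive integers and each $Q_r$ is infinite in limit, each $\Lambda_M(Q_r)=(q_{r,m_t})_t$ is itself a basic sequence infinite in limit, so \reft{mainpsi} is applicable to the tuple of basic sequences $\Lambda_M(Q_1),\ldots,\Lambda_M(Q_j)$ and the point $y$.

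To invoke \reft{mainpsi}, I would take the auxiliary sampling sequence appearing there to be the identity $M' := (t)_{t=1}^{\infty}$, so that $N_{M',n}$ coincides with $N_n$. The hypothesis of \reft{mainpsi}, transported to the subsampled system, requires the $\Lambda_M(Q_1)$-digits $F_t := E_{m_t}$ of $y$ to satisfy $F_t < \min_{2 \le r \le j}\bigl((\Lambda_M(Q_r))_t - 1\bigr) = \min_{2 \le r \le j}(q_{r,m_t}-1)$ for infinitely many $t$. This is immediate from the hypothesis of the present corollary, which furnishes the (formally stronger) inequality $E_{m_t} < \min_{1 \le r \le j}(q_{r,m_t}-1)$ for infinitely many $t$.

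With these translations, \reft{mainpsi} yields the claimed $O(1)$ estimate verbatim: the composition $\psi_{\Lambda_M(Q_{j-1}),\Lambda_M(Q_j)} \circ \cdots \circ \psi_{\Lambda_M(Q_1),\Lambda_M(Q_2)}$ applied to $y$ is precisely the quantity appearing on the left-hand side, while on the right-hand side the $\Lambda_M(Q_1)$-digits of $y = \Upsilon_{Q_1,M}(x)$ are counted by $N_n^{\Lambda_M(Q_1)}$. The only substantive work is the bookkeeping check that the operation $\Lambda_M$ intertwines the input data of \reft{mainpsi} correctly and that the identity choice of $M'$ is legal; there is no genuine mathematical obstacle beyond this notational translation, which is why the statement deserves to be packaged as a corollary rather than a standalone theorem.
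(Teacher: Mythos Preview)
Your proposal is correct and matches the paper's own treatment: the paper states \refc{mainpsi2} as an ``immediate corollary'' of \reft{mainpsi} without further proof, and your reduction---apply \reft{mainpsi} to $y=\Upsilon_{Q_1,M}(x)$ with the subsampled basic sequences $\Lambda_M(Q_1),\ldots,\Lambda_M(Q_j)$ and with the identity sampling sequence---is exactly the intended argument. The only small point you might add is that the inclusion of $r=1$ in the hypothesis $E_{m_t}<\min_{1\le r\le j}(q_{r,m_t}-1)$ guarantees $E_{m_t}\neq q_{1,m_t}-1$ infinitely often, so that $y$ has a genuine $\Lambda_M(Q_1)$-Cantor series expansion; but this is bookkeeping rather than a gap.
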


\begin{thrm}\labt{rationormal}
Let $Q \in \NN$ be infinite in limit. Let $\zeta_1$ and $R_1$ be as in \reft{construction}. Then 
$$\psi_{R_1,Q}(\zeta_1) \in \RNAPIQ \cap \RNAPIIQ \subseteq \RNQ.$$
  If $Q$ is a computable sequence, then $\psi_{R_1,Q}(\zeta_1)$ is a computable real number.
\end{thrm}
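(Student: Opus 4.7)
The plan is to combine Theorem \ref{thm:construction} with the transfer principle provided by Theorem \ref{thm:mainpsi} and Corollary \ref{coro:mainpsi2}. Applying Theorem \ref{thm:construction} with $t=1$ yields $\zeta_1 \in \NAPI{R_1} \cap \NAPII{R_1}$, and $R_1$ is fully divergent of types I and II. The first step is to observe that AP $R_1$-normality automatically forces AP $R_1$-ratio normality for $\zeta_1$: for two blocks $B_1,B_2$ of the same length $k$ and any $\mrmr$, the convergences $N_{n,m,r}^{R_1}(B_j,\zeta_1)/(R_1)_{n,m,r}^{(k)}\to 1$ for $j=1,2$, together with the divergence of the denominator and its independence from $B_j$, imply $N_{n,m,r}^{R_1}(B_1,\zeta_1)/N_{n,m,r}^{R_1}(B_2,\zeta_1)\to 1$. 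The same reasoning applied to the primed counts ${N_{n,m,r}^{R_1}}'$ handles the type II case.

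The second step is to transfer these ratios to the $Q$-expansion of $\psi_{R_1,Q}(\zeta_1)$. To apply Theorem \ref{thm:mainpsi} (resp.\ Corollary \ref{coro:mainpsi2}) with $j=2$, $Q_1=R_1$, $Q_2=Q$, and $M=\mathscr{A}_{m,r}$, I need to verify that the $R_1$-digits $E_n$ of $\zeta_1$ satisfy $E_n<q_n-1$ for infinitely many $n$ (resp.\ for infinitely many $n$ in every $\mathscr{A}_{m,r}$). This is immediate from the construction: each $X_{i,1}=C_{i,i!}$ starts with $i!$ consecutive zeros and $i!\to\infty$, so the $R_1$-expansion of $\zeta_1$ contains arbitrarily long runs of zeros, and for any fixed $(m,r)$ every sufficiently long run meets $\mathscr{A}_{m,r}$; since $Q$ is infinite in limit, $0<q_n-1$ at every such position. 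The transfer identities then give
\begin{equation*}
N_{n,m,r}^Q(B,\psi_{R_1,Q}(\zeta_1)) = N_{n,m,r}^{R_1}(B,\zeta_1)+O(1)
\end{equation*}
and an analogous equality for the primed counts, so the $O(1)$ error is negligible against the diverging $R_1$-counts when forming ratios. This yields $\psi_{R_1,Q}(\zeta_1)\in\RNAPIQ\cap\RNAPIIQ$, and the containment $\RNAPIQ\cap\RNAPIIQ\subseteq\RNQ$ is immediate upon specialising to $(m,r)=(1,0)$.

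For computability, Theorem \ref{thm:construction} already produces a computable basic sequence $R_1$ and a computable real $\zeta_1$, since all of the data ($X_{i,1}=C_{i,i!}$, $l_i$, $b_{i,1}=i$, $m_i=i$) are given by explicit computable formulas. When $Q$ is computable, the defining series $\psi_{R_1,Q}(\zeta_1)=\sum_n\min(E_n,q_n-1)/(q_1\cdots q_n)$ can be truncated to any desired precision, so $\psi_{R_1,Q}(\zeta_1)$ is a computable real number. The only delicate point throughout is the verification of the digit hypothesis of Theorem \ref{thm:mainpsi} and Corollary \ref{coro:mainpsi2} — a condition that can fail in general (see the footnote to Theorem \ref{thm:mainpsi}) — and the specific choice $X_{i,1}=C_{i,i!}$, beginning with $i!$ zeros, is precisely what makes the transfer work uniformly for every arithmetic progression.
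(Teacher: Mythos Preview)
Your argument is correct and is precisely the approach the paper intends: the paper's proof of \reft{rationormal} is the single sentence ``follows directly from \reft{mainpsi}, \reft{construction}, and \refc{mainpsi2},'' and you have filled in exactly those details, including the verification of the digit hypothesis via the leading $i!$ zeros of $C_{i,i!}$ and the passage from AP normality to AP ratio normality by dividing the two asymptotics. The only cosmetic point is that \reft{construction} is stated for $t\in\mathbb{N}_2$ while you (and the paper) invoke it at $t=1$; the construction and proof go through verbatim for $t=1$, so this is a quirk of the paper rather than a gap in your argument.
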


\reft{rationormal} follows directly from \reft{mainpsi}, \reft{construction}, and \refc{mainpsi2}.

\section{Proofs of theorems}

\subsection{Proof of \reft{APmain}}\labs{APmain}
We prove the first part of \reft{APmain} that gives conditions for constructing members of $\NAPIkm{Q}{k}{m}$.  The second half is proven similarly with nearly identical lemmas.
In this subsection, we fix an APBFF $D=((l_i,b_i,v_i,\epsilon_i,k_i,\mu_i,m_i))_{i=1}^\infty$ and a  sequence of blocks $X=(X_i)$ that is $D$-good of type I.  Put $Q=(q_n)=\Gamma(D,X)$, $x=\eta(D,X)$, and let 
$m \in S(D)$.  We list some of the lemmas in this section without proof due to their similarity to lemmas leading towards the proof of the main theorem in \cite{Mance}.  However, we leave a few of the proofs to demonstrate some of the difference between the two.

Let $n \in \mathbb{N}$ be given and let $i=i(n)$ be the integer that satisfies $L_i < n \leq L_{i+1}$.  Let
$u$,  $\alpha$, and $\beta$ be integers such that $(\al,\be) \in [0,l_{i+1}] \times [0,|X_{i+1}|)$, and $u = n - L_i = \alpha|X_i| + \beta$. 

\begin{lem}\labl{firstlemma}
If $X_i$ is $(\epsilon_i,k_i,m_i,\mu_i)$-normal of type I, $k \le k_i$, and $B$ is a block of length $k$ in base $v_i$, then
\begin{align} \labeq{boundi}
(1-\epsilon_i)b_i^{-k}\left\lceil \frac{|X_i| - r}{m}\right\rceil &\le N_{m,r}(B,X_i) \le (1+\epsilon_i)b_i^{-k}\left\lceil \frac{|X_i| - r}{m}\right\rceil;\\
\labeq{boundi2}
(1-\epsilon_{i+1}) b_{i+1}^{-k}\alpha\left\lceil \frac{|X_{i+1}| - r}{m}\right\rceil &\le N_{u,m,r}(B,l_{i+1}X_{i+1})
\le (1+\epsilon_{i+1}) b_{i+1}^{-k}\alpha\left\lceil\frac{|X_{i+1}| - r}{m}\right\rceil + \left\lceil \frac{k\alpha}{m}\right\rceil + \left\lceil \frac{\beta}{m}\right\rceil.
\end{align}
\end{lem}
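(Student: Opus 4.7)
The first inequality \refeq{boundi} is a direct application of the defining property of $(\epsilon_i, k_i, m_i, \mu_i)$-normality of type~I to $X_i$: setting $k' = k \leq k_i$, $m' = m \leq m_i$, and using that $\mu_i$ is $(v_i, b_i)$-uniform so $\mu_i(B) = b_i^{-k}$ for every block $B$ of length $k$ in base $v_i$, the inequality is immediate from the definition, with no further machinery required.

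For \refeq{boundi2}, write $Y := X_{i+1}^{l_{i+1}}$ and decompose the prefix $Y[1..u]$ into $\alpha$ complete consecutive copies of $X_{i+1}$ followed by a partial prefix of the next copy of length $\beta$. I partition the AP starting positions $\{r, r+m, r+2m, \ldots\} \cap [1, u]$ into three disjoint classes according to where the induced length-$k$ occurrence lies: those entirely inside a single full copy of $X_{i+1}$, those straddling a boundary between two consecutive copies, and those beginning in the final length-$\beta$ prefix.

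For the lower bound I retain only the first class. Within the $j$-th full copy ($0 \leq j < \alpha$) the global AP restricts to a shifted AP of step $m$ with some offset $r_j \in [0, m)$ determined by $j |X_{i+1}| \bmod m$; applying \refeq{boundi} to $X_{i+1}$ with shift $r_j$ yields at least $(1-\epsilon_{i+1}) b_{i+1}^{-k} \lceil (|X_{i+1}| - r_j)/m \rceil$ non-straddling contributions per copy. Summing over $j$ and using that $\lceil (|X_{i+1}| - r_j)/m \rceil$ agrees with $\lceil (|X_{i+1}| - r)/m \rceil$ to within $\pm 1$ per copy, the $O(\alpha)$ shift-mismatch error is absorbed into the leading $(1 - \epsilon_{i+1})$ factor via the $D$-good growth hypotheses \refeq{good1}--\refeq{good3}.

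For the upper bound I add the second and third classes back to the $(1 + \epsilon_{i+1})$ version of the interior count. At each of the $\alpha$ internal boundaries only AP starting positions within distance $k$ of the boundary can produce a straddling length-$k$ occurrence, contributing at most $\lceil k/m \rceil$ per boundary and $\lceil k\alpha/m \rceil$ in total; the final length-$\beta$ prefix contains at most $\lceil \beta/m \rceil$ AP starting positions, accounting for the remaining error term. The main obstacle I expect is the bookkeeping around the offsets $r_j$: the per-copy counts naturally involve $\lceil (|X_{i+1}| - r_j)/m \rceil$ rather than the uniform $\lceil (|X_{i+1}| - r)/m \rceil$ appearing in the statement, and verifying that the accumulated $O(\alpha)$ correction is dominated by the leading $\alpha |X_{i+1}|/(m b_{i+1}^k)$ scale is exactly where the growth conditions are invoked.
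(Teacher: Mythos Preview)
The paper does not prove this lemma; it is listed among those omitted ``due to their similarity to lemmas leading towards the proof of the main theorem in \cite{Mance},'' so there is no in-paper argument to compare against. Your proof of \refeq{boundi} is correct and is exactly the definition of $(\epsilon_i,k_i,m_i,\mu_i)$-normality of type~I combined with $\mu_i(B)=b_i^{-k}$. Your three-way decomposition for \refeq{boundi2} into interior occurrences, boundary-straddling occurrences, and the final $\beta$-prefix is also the natural one, and the error terms $\lceil k\alpha/m\rceil$ and $\lceil\beta/m\rceil$ arise just as you say.

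The one genuine gap is the handling of the per-copy offsets $r_j$. You correctly note that the $j$-th copy contributes a count governed by $\lceil(|X_{i+1}|-r_j)/m\rceil$, which may differ from $\lceil(|X_{i+1}|-r)/m\rceil$ by~$1$, producing a cumulative discrepancy of order $\alpha b_{i+1}^{-k}$. But ``absorbing'' this into the $(1-\epsilon_{i+1})$ factor via \refeq{good1}--\refeq{good3} does not establish \refeq{boundi2} as the exact, fixed-$i$ inequality it is stated to be: those growth hypotheses are asymptotic in $i$, while the lemma is pointwise. (A milder version of the same issue affects your boundary count: summing $\lceil k/m\rceil$ over $\alpha$ boundaries yields $\alpha\lceil k/m\rceil$, not the $\lceil k\alpha/m\rceil$ appearing in the statement.) In practice neither discrepancy matters. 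In every concrete construction the paper uses---e.g.\ \reft{construction}, where $m_i!$ divides $|X_{i+1}|$---one has $m\mid|X_{i+1}|$, so $r_j=r$ for every copy and the problem disappears outright; and even in the general case the extra $O(\alpha b_{i+1}^{-k})$ term is $O(m/|X_{i+1}|)$ relative to the main term and would be swallowed in \refl{sixthlemma}. The clean fix is either to record the divisibility hypothesis explicitly, or to carry the small additional error through \refl{secondlemma}--\refl{sixthlemma} rather than asserting the literal inequality.
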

The next lemma follows by an application of \refl{firstlemma}
\begin{lem}\labl{secondlemma}
If $k \le k_i$ and $B$ is a block of length $k$ in base $v_i$,
\begin{equation} \labeq{boundn}
(1-\epsilon_i)b_i^{-k}l_i\left\lceil\frac{|X_i|-r}{m}\right\rceil + (1-\epsilon_{i+1})b_{i+1}^{-k}\alpha\left\lceil\frac{|X_{i+1}|-r}{m}\right\rceil \le N_{n,m,r}^Q(B,X) \le
\end{equation}
\begin{equation*}
\left\lceil\frac{L_{i-1}-r}{m}\right\rceil + (1+\epsilon_{i})b_{i}^{-k}l_i\left\lceil\frac{|X_{i}|-r}{m}\right\rceil + (1+\epsilon_{i+1})b_{i+1}^{-k}\alpha\left\lceil\frac{|X_{i+1}|-r}{m}\right\rceil + \left\lceil\frac{\beta}{m}\right\rceil + \left\lceil\frac{k\alpha}{m}\right\rceil + \left\lceil\frac{k}{m}\right\rceil(l_i+1).
\end{equation*}
\end{lem}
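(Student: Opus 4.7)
The expansion of $x=\eta(D,X)$ in base $Q=\Gamma(D,X)$ is the concatenation $X_1^{l_1} X_2^{l_2} X_3^{l_3}\cdots$, and since $L_i<n\leq L_{i+1}$ with $u=n-L_i=\al|X_{i+1}|+\be$, the prefix of length $n$ decomposes naturally into three regions: the initial segment of length $L_{i-1}$, the $l_i$ full copies of $X_i$ occupying positions $(L_{i-1},L_i]$, and the tail of length $u$ lying inside $X_{i+1}^{l_{i+1}}$ which consists of $\al$ full copies of $X_{i+1}$ followed by $\be$ further digits. The count $N_{n,m,r}^Q(B,X)$ splits correspondingly into contributions from each region, and my plan is to bound each piece separately using \refl{firstlemma}.

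For the upper bound, the first region contributes at most $\lceil (L_{i-1}-r)/m\rceil$, the trivial count of AP positions in $[1,L_{i-1}]$. Inside the middle region, the AP positions $mt+r$ restrict on the $j$-th copy of $X_i$ to an AP with offset $r_j\in\{1,\dots,m\}$ determined by the global-to-local coordinate shift, and \refeq{boundi} of \refl{firstlemma} gives at most $(1+\epsilon_i)b_i^{-k}\lceil (|X_i|-r_j)/m\rceil$ occurrences of $B$ wholly inside that copy; summing over $j=1,\dots,l_i$ and comparing the various $r_j$ with $r$ produces the middle main term $(1+\epsilon_i)b_i^{-k}l_i\lceil (|X_i|-r)/m\rceil$ plus bounded ceiling discrepancy. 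Occurrences of $B$ that straddle the boundary between two consecutive copies of $X_i$ (or between $X_i^{l_i}$ and $X_{i+1}^{l_{i+1}}$) are not covered by these per-copy counts; at each of the at most $l_i+1$ such boundaries the block $B$ can begin within a window of length $k-1$, which contains at most $\lceil k/m\rceil$ AP starting positions, so the total boundary correction is $\lceil k/m\rceil(l_i+1)$. The third region is handled directly by \refeq{boundi2}, which already records the main $\al$-copy term together with the tail corrections $\lceil k\al/m\rceil+\lceil\be/m\rceil$.

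For the lower bound one simply discards the contributions of the first region and of the boundary-straddling occurrences, applies the lower halves of \refeq{boundi} and \refeq{boundi2}, and sums the per-copy lower bound over the $l_i$ copies of $X_i$ to obtain $(1-\epsilon_i)b_i^{-k}l_i\lceil (|X_i|-r)/m\rceil$, while the $X_{i+1}$ summand drops out of \refeq{boundi2} directly. The only delicate step is verifying that replacing the per-copy offsets $r_j$ by the uniform $r$ can be absorbed into the stated error budget $\lceil k/m\rceil(l_i+1)$ together with the trivial first-region term $\lceil (L_{i-1}-r)/m\rceil$; this bookkeeping is the main (purely combinatorial) obstacle, and once it is handled, no further ideas beyond \refl{firstlemma} are required.
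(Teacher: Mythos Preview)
Your proposal is correct and matches the paper's approach exactly: the paper offers no proof beyond the one-line remark that the lemma ``follows by an application of \refl{firstlemma},'' and what you have written is precisely that application, carried out via the natural three-region decomposition $[1,L_{i-1}]\cup(L_{i-1},L_i]\cup(L_i,n]$ and the per-copy bounds \refeq{boundi}, \refeq{boundi2}. Your identification of the offset bookkeeping (replacing the local residues $r_j$ on each copy of $X_i$ by the global $r$) as the only nontrivial step is accurate, and the paper likewise suppresses this detail.
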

We  introduce a quantity $S_{n,m,r}^{(k)}$ to estimate $Q_{n,m,r}^{(k)}$.
\begin{equation*}
S_{n,m,r}^{(k)} = \sum_{j=1}^{i} b_j^{-k} \left\lceil\frac{l_j|X_j|-r}{m}\right\rceil + b_{i+1}^{-k}\left\lceil\frac{|u|-r}{m}\right\rceil
\end{equation*}
\begin{lem}\labl{thirdlemma}
$\lim_{n \to \infty} \frac {Q_{n,m,r}^{(k)}}{{S}_{n,m,r}^{(k)}} = 1$.
\end{lem}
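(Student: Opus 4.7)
The plan is to expand $Q_{n,m,r}^{(k)}$ as a sum over the maximal intervals of constancy of $Q = \Gamma(D,X)$ and compare term by term with $S_{n,m,r}^{(k)}$. Fix $n$ with $L_i < n \le L_{i+1}$ and set $u = n - L_i$. For $s \in \{1, \ldots, i+1\}$, call an index $j \in \{0,1,\ldots,\lfloor (n-r)/m \rfloor\}$ \emph{interior to block $s$} if the window $\{mj+r, mj+r+1, \ldots, mj+r+k-1\}$ lies in $(L_{s-1}, L_s]$ (or in $(L_i, n]$ when $s = i+1$), and \emph{boundary} otherwise. Each interior term equals $b_s^{-k}$, while a boundary term at the interface $L_s$ has the form $b_s^{-a} b_{s+1}^{-(k-a)}$ for some $1 \le a \le k-1$, which by monotonicity of $(b_s)$ is at most $b_s^{-k}$.

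Next I would execute the counting. Elementary bookkeeping shows that the number of interior indices for block $s$ differs from $\lceil (l_s|X_s|-r)/m \rceil$ (or $\lceil (u-r)/m \rceil$ for $s = i+1$) by at most $\lceil (k-1)/m \rceil + O(1)$, so the interior contribution agrees with $S_{n,m,r}^{(k)}$ up to an additive error of size $C(k,m) \sum_{s=1}^{i+1} b_s^{-k}$. Moreover, at each of the $i$ interfaces there are at most $\lceil (k-1)/m \rceil$ boundary indices, and each contributes at most $b_s^{-k}$, so the boundary contribution is also bounded by $C(k,m) \sum_{s=1}^{i+1} b_s^{-k}$. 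Combining these,
\[
\left| Q_{n,m,r}^{(k)} - S_{n,m,r}^{(k)} \right| \le C(k,m) \sum_{s=1}^{i+1} b_s^{-k} \le C(k,m) (i+1) b_1^{-k} = O(i).
\]

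To conclude, I would show that $S_{n,m,r}^{(k)}$ grows strictly faster than $i$. Applying \refeq{good2} between consecutive indices $i-1$ and $i$ yields $l_i|X_i|/(l_{i-1}|X_{i-1}|) \to \infty$ faster than $i\, b_i^k$, so writing $T_i := b_i^{-k}\lceil (l_i|X_i|-r)/m \rceil$ we obtain $T_i / i \to \infty$. Since $S_{n,m,r}^{(k)} \ge T_i$, it follows that the error $O(i)$ is $o(S_{n,m,r}^{(k)})$, yielding $\lim_n Q_{n,m,r}^{(k)}/S_{n,m,r}^{(k)} = 1$. The main obstacle is this final dominance step: the estimate $|Q-S| = O(i)$ is loose, and only by invoking the rapid growth built into the $D$-good conditions (together with the insensitivity of the lower bound $S \ge T_i$ to the value of $u$, so that the case of small $u = n-L_i$ is handled) can one make the boundary errors negligible compared to the cumulative bulk.
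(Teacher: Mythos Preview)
Your proposal is correct and follows essentially the same strategy as the paper: bound $|Q_{n,m,r}^{(k)} - S_{n,m,r}^{(k)}|$ by a quantity of order $i = i(n)$, and then use the $D$-good growth condition \refeq{good2} to show that $S_{n,m,r}^{(k)} \ge b_i^{-k}\lceil (l_i|X_i|-r)/m\rceil$ grows faster than $i$. The paper organizes the comparison via auxiliary block sums $\bar Q_j^{(k)}$ and $\bar S_j^{(k)}$ and the one-sided inequality $S \ge Q$, whereas you phrase it as an interior/boundary decomposition and obtain the slightly sharper (but ultimately unused) intermediate bound $C(k,m)\sum_{s\le i+1} b_s^{-k}$; these are cosmetic differences, and your concluding appeal to \refeq{good2} for $T_i/i \to \infty$ is exactly the mechanism the paper invokes.
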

\begin{proof} We will need to show that $\frac{{S}_{n,m,r}^{(k)} - Q_{n,m,r}^{(k)}}{{S}_{n,m,r}^{(k)}} \to 0$.
Let $s=\min\left\{t:k<\left\lceil\frac{|X_t|-r}{m}\right\rceil \right\}$.  
For $j \geq s$, set
\begin{equation*}
\bar{Q}_{j}^{(k)} = \left(\left(\frac{1}{b_j^k} + ... + \frac{1}{b_j^k}\right) + \left( \frac{1}{b_j^{k-1}b_{j+1}} + ... + \frac{1}{b_jb_{j+1}^{k+1}}\right)\right) = \frac{l_j|X_j|-(k-1)}{b_j^k} + \sum_{t=1}^{k-1}\frac{1}{b_j^{k-t}b_{j+1}^{t}}
\end{equation*}
and $\bar{S}_{j}^{(k)} = \frac{l_j|X_j|}{b_j^k}$.
We note that for $n$ large enough that $i(n)>s$
\begin{align*}
Q_{n,m,r}^{(k)} &= Q_{L_{s-1},m,r}^{(k)} + \sum_{\substack{j=s \\ j \equiv r \bmod m}}^i\bar{Q}_{j}^{(k)} + {\sum_{\substack{t = L_i + 1\\ t \equiv r\bmod m}}^n} \frac{1}{q_tq_{t+1}...q_{t+k+1}};\\
{S}_{n,m,r}^{(k)} &= {S}_{L_{s}-1,m,r}^{(k)} + \sum_{\substack{j=s \\ j \equiv r \bmod m}}^i\bar{S}_{j}^{(k)} +  {\sum_{\substack{t = L_i + 1\\ t \equiv r\bmod m}}^n} \frac{1}{q_tq_{t+1}...q_{t+k+1}}.
\end{align*}
We can observe that ${S}_{n,m,r}^{(k)} \ge Q_{n,m,r}^{(k)}$ since all the terms after $L_i + 1$ are the same, but the terms up to $L_i$ satisfy $\bar{S}_{j}^{(k)} \ge \bar{Q}_{j}^{(k)}$. Thus, ${S}_{n,m,r}^{(k)} - Q_{n,m,r}^{(k)} \ge 0$  when $i(n)>s$ and is an increasing function of $i=i(n)$, so
\begin{equation*}
{S}_{n,m,r}^{(k)} - Q_{n,m,r}^{(k)} \le {S}_{L_{i+1},m,r}^{(k)} - Q_{L_{i+1},m,r}^{(k)} = \left({S}_{L_{s-1},m,r}^{(k)} - Q_{L_{s}-1,m,r}^{(k)}\right) + \sum_{\substack{j=s \\ j \equiv r \bmod m}}^{i+1} \left( \bar{S}_{j}^{(k)} - \bar{Q}_{j}^{(k)} \right).
\end{equation*}
We now wish to estimate
$$
\sum_{\substack{j=s \\ j \equiv r \bmod m}}^{i+1} \left( \bar{S}_{j}^{(k)} - \bar{Q}_{j}^{(k)} \right).
$$
In the proof of Lemma 2.3 in \cite{Mance}, it is shown that
$\bar{S}_{j}^{(k)} - \bar{Q}_{j}^{(k)} < k$, so
\begin{equation*}
\sum_{\substack{j=s \\ j \equiv r \bmod m}}^{i+1} \left( \bar{S}_{j}^{(k)} - \bar{Q}_{j}^{(k)} \right) < \left\lceil \frac{i+2-s}{m} \right\rceil k.
\end{equation*}
Put $v = {S}_{L_{s-1},m,r}^{(k)} - Q_{L_{s-1},m,r}^{(k)}$.  Then
${S}_{n,m,r}^{(k)} - Q_{n,m,r}^{(k)} \le v + \left\lceil \frac{i+2-s}{m} \right\rceil k$
and
\begin{equation*}
\frac{{S}_{n,m,r}^{(k)} - Q_{n,m,r}^{(k)}}{{S}_{n,m,r}^{(k)}} < \frac{(v + k + \frac{2k}{m} - \frac{2s}{m}) + \frac{ki}{m}}{\frac{l_i|X_i|}{m}} = \frac{(mv + mk + 2k - 2s) + ki}{l_i|X_i|}.
\end{equation*}
We know $m$, $k$, and $s$ do not depend on $i$, and since $v$ depends only on $m$, $k$, and $s$, it also does not vary in terms of $i$. Thus, we know $l_i|X_i|$ 
$\frac{i}{l_i|X_i|} \to 0$  by  \refeq{good2}, so
\begin{equation*}
\frac{{S}_{n,m,r}^{(k)} - Q_{n,m,r}^{(k)}}{{S}_{n,m,r}^{(k)}} < \frac{(mv + mk + 2k - 2s) + ki}{l_i|X_i|} \to 0 \Rightarrow \lim_{n \to \infty} \frac {Q_{n,m,r}^{(k)}}{{S}_{n,m,r}^{(k)}} = 1.
\end{equation*}
\end{proof}
We will definitione two rational functions to estimate $\left|\frac{N_{n,m,r}^Q(B,X)}{Q_{n,m,r}^{(k)}} - 1\right|$. Let
\begin{align*}
f_i(w,z) &= \frac{\left({S}_{L_{i-1},m,r}+\epsilon_ib_i^{-k}l_i\left\lceil\frac{|X_i|-r}{m}\right\rceil\right) +w\left(\epsilon_{i+1}b_{i+1}^{-k}\left\lceil \frac{|X_{i+1}|-r}{m} \right\rceil \right)+z\left(\frac{b_{i+1}^{-k}}{m}\right)}{{S}_{L_i,m,r}^{(k)} + w\left(b_{i+1}^{-k}\left\lceil \frac{|X_{i+1}|-r}{m} \right\rceil \right)+z \left(\frac{b_{i+1}^{-k}}{m}\right)};\\
g_i(w,z) &= \frac{1}{{S}_{L_i,m,r}^{(k)} + w\left(b_{i+1}^{-k}\left\lceil \frac{|X_{i+1}|-r}{m} \right\rceil \right)+z \left(\frac{b_{i+1}^{-k}}{m}\right)}    \Bigg(\bigg(\frac{L_{i-1}-r}{m} + 3+ b_{i}^{-k}l_i + \epsilon_{i}b_{i}^{-k}l_i\left\lceil\frac{|X_{i}|-r}{m}\right\rceil\\
&\ + \left\lceil\frac{k}{m}\right\rceil(l_i+1) + b_{i+1}^{-k}\frac{r}{m}\bigg) + w\left(\frac{k}{m} + \epsilon_{i+1}b_{i+1}^{-k}\left\lceil\frac{|X_{i+1}|-r}{m}\right\rceil + b_{i+1}^{-k}\right) + \frac{z}{m} \Bigg). 
\end{align*}

\begin{lem}\labl{fourthlemma}
Let $B$ be a block of length $k$ in base $v_i$. If $k \le k_i$, then
$$
\left|\frac{N_{n,m,r}^Q(B,X)}{Q_{n,m,r}^{(k)}} - 1\right| <2g_i(w,z) + \frac{{S}_{n,m,r}^{(k)} - Q_{n,m,r}^{(k)}}{{Q}_{n,m,r}^{(k)}}.
$$
\end{lem}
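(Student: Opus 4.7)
The plan is to use the triangle inequality with the auxiliary quantity $S_{n,m,r}^{(k)}$ as a bridge between $N_{n,m,r}^Q(B,X)$ and $Q_{n,m,r}^{(k)}$. Since $S_{n,m,r}^{(k)} \ge Q_{n,m,r}^{(k)}$ (established inside the proof of \refl{thirdlemma}),
$$\left|\frac{N_{n,m,r}^Q(B,X)}{Q_{n,m,r}^{(k)}} - 1\right| \le \frac{|N_{n,m,r}^Q(B,X) - S_{n,m,r}^{(k)}|}{Q_{n,m,r}^{(k)}} + \frac{S_{n,m,r}^{(k)} - Q_{n,m,r}^{(k)}}{Q_{n,m,r}^{(k)}},$$
and the second summand is exactly one of the terms on the right-hand side of the claim. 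It therefore suffices to show $\frac{|N_{n,m,r}^Q(B,X) - S_{n,m,r}^{(k)}|}{Q_{n,m,r}^{(k)}} < 2 g_i(\alpha,\beta)$ for the specific choice $(w,z)=(\alpha,\beta)$ coming from the decomposition $n - L_i = \alpha|X_{i+1}|+\beta$.

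The strategy for this remaining estimate is to prove the stronger inequality $\frac{|N_{n,m,r}^Q(B,X) - S_{n,m,r}^{(k)}|}{S_{n,m,r}^{(k)}} \le g_i(\alpha,\beta)$, and then pay a factor of $2$ when switching the denominator from $S$ to $Q$. The first step is essentially algebraic verification: \refl{secondlemma} gives explicit lower and upper bounds $\mathrm{LB} \le N_{n,m,r}^Q(B,X) \le \mathrm{UB}$, while the definition $S_{n,m,r}^{(k)} = S_{L_i,m,r}^{(k)} + b_{i+1}^{-k}\lceil(u-r)/m\rceil$ with $u = \alpha|X_{i+1}|+\beta$ provides a parallel decomposition. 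Applying $\lceil x\rceil \le x+1$ to split the single ceiling in $S_{n,m,r}^{(k)}$ into its $\alpha$ and $\beta$ contributions (which is where the ad hoc additive constants $3$, $b_i^{-k}l_i$, and $b_{i+1}^{-k}\frac{r}{m}$ in $g_i$'s numerator come from), one checks term-by-term that both $\mathrm{UB} - S_{n,m,r}^{(k)}$ and $S_{n,m,r}^{(k)} - \mathrm{LB}$ are dominated by the numerator of $g_i(\alpha,\beta)$, while the denominator of $g_i$ itself is a lower estimate for $S_{n,m,r}^{(k)}$.

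To close the argument I would invoke \refl{thirdlemma} once more: because $S_{n,m,r}^{(k)}/Q_{n,m,r}^{(k)} \to 1$, this ratio is strictly less than $2$ for all sufficiently large $n$, giving
$$\frac{|N_{n,m,r}^Q(B,X) - S_{n,m,r}^{(k)}|}{Q_{n,m,r}^{(k)}} = \frac{|N_{n,m,r}^Q(B,X) - S_{n,m,r}^{(k)}|}{S_{n,m,r}^{(k)}} \cdot \frac{S_{n,m,r}^{(k)}}{Q_{n,m,r}^{(k)}} < 2 g_i(\alpha,\beta),$$
which combined with the triangle-inequality step yields the claim. The main obstacle will be the bookkeeping of ceiling and floor corrections: \refl{secondlemma} expresses the partial-block contribution with separated ceilings $\lceil k\alpha/m\rceil$, $\lceil\beta/m\rceil$, and $\lceil(L_{i-1}-r)/m\rceil$, whereas $S_{n,m,r}^{(k)}$ packages them into the single ceiling $\lceil(\alpha|X_{i+1}|+\beta-r)/m\rceil$. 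Reconciling these two representations term-by-term, in both directions of inequality, requires a careful case analysis of which ceilings round up, and this accounts for most of the genuine technical work in the proof.
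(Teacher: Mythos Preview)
Your approach is essentially the same as the paper's, with only a minor organizational difference. The paper does not use a single triangle-inequality split; instead it treats the two cases $N_{n,m,r}^Q(B,X)\le Q_{n,m,r}^{(k)}$ and $N_{n,m,r}^Q(B,X)\ge Q_{n,m,r}^{(k)}$ separately. For the underestimate it introduces the auxiliary function $f_i(w,z)$ (same denominator as $g_i$, smaller numerator) and shows $|N/Q-1|<2f_i(\alpha,\beta)$ directly; for the overestimate it gets $|N/Q-1|<2g_i(\alpha,\beta)+(S-Q)/Q$ exactly as you describe; and then it observes $f_i<g_i$ termwise to merge the two cases. Your uniform bound $|N-S|/S\le g_i(\alpha,\beta)$ amounts to the same two verifications packaged together, and the factor-of-$2$ step via $S/Q<2$ is identical. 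The paper's long displayed computation of $\kappa-S_{n,m,r}^{(k)}$ is precisely the ``term-by-term reconciliation of ceilings'' you anticipate as the main work.
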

\begin{proof}
Using the lower bound from \refeq{boundi} and \refeq{boundi2} of $N_{n,m,r}^{Q}(B,X)$, we get,
\begin{align*}
\left|\frac{N_{n,m,r}^Q(B,X)}{Q_{n,m,r}^{(k)}} - 1\right| &< 1 - \frac{(1-\epsilon_i)b_i^{-k}l_i\left\lceil\frac{|X_i|-r}{m}\right\rceil + (1-\epsilon_{i+1})b_{i+1}^{-k}\alpha\left\lceil\frac{|X_{i+1}|-r}{m}\right\rceil}{Q_{n,m,r}^{(j)}}\\
&< \frac{{S}_{n,m,r}^{(k)} -\left( (1-\epsilon_i)b_i^{-k}l_i\left\lceil\frac{|X_i|-r}{m}\right\rceil + (1-\epsilon_{i+1})b_{i+1}^{-k}\alpha\left\lceil\frac{|X_{i+1}|-r}{m}\right\rceil \right)}{Q_{n,m,r}^{(k)}} \frac{Q_{n,m,r}^{(k)}}{{S}_{n,m,r}^{(k)}} \left(\frac{{S}_{n,m,r}^{(k)}}{Q_{n,m,r}^{(k)}}\right)\\
&< \frac{{S}_{n,m,r}^{(k)} -\left( (1-\epsilon_i)b_i^{-k}l_i\left\lceil\frac{|X_i|-r}{m}\right\rceil + (1-\epsilon_{i+1})b_{i+1}^{-k}\alpha\left\lceil\frac{|X_{i+1}|-r}{m}\right\rceil \right)}{{S}_{n,m,r}^{(k)}}\cdot 2 = 2f_i(\alpha,\beta).
\end{align*}
Let
\begin{equation*}
\kappa = \left\lceil\frac{L_{i-1}-r}{m}\right\rceil + (1+\epsilon_{i})b_{i}^{-k}l_i\left\lceil\frac{|X_{i}|-r}{m}\right\rceil+ (1+\epsilon_{i+1})b_{i+1}^{-k}\alpha\left\lceil\frac{|X_{i+1}|-r}{m}\right\rceil + \left\lceil\frac{\beta}{m}\right\rceil + \left\lceil\frac{k\alpha}{m}\right\rceil + \left\lceil\frac{k}{m}\right\rceil(l_i+1).
\end{equation*}
Using the upper  bound from \refeq{boundn} of $N_{n,m,r}^{Q}(B,X)$,
\begin{align*}
\left|\frac{N_{n,m,r}^Q(B,X)}{Q_{n,m,r}^{(k)}} - 1\right| &\le \frac{\kappa}{Q_{n,m,r}^{(k)}} - 1 = \frac{(\kappa - {S}_{n,m,r}^{(k)}) + ({S}_{n,m,r}^{(k)} - Q_{n,m,r}^{(k)})}{Q_{n,m,r}^{(k)}}\\
&< \frac{\kappa - {S}_{n,m,r}^{(k)}}{Q_{n,m,r}^{(k)}}\frac{Q_{n,m,r}^{(k)}}{{S}_{n,m,r}^{(k)}} \left(\frac{{S}_{n,m,r}^{(k)}}{Q_{n,m,r}^{(k)}}\right) + \frac{{S}_{n,m,r}^{(k)} - Q_{n,m,r}^{(k)}}{Q_{n,m,r}^{(k)}} < 2\frac{\kappa - {S}_{n,m,r}^{(k)}}{{S}_{n,m,r}^{(k)}} + \frac{{S}_{n,m,r}^{(k)} - Q_{n,m,r}^{(k)}}{Q_{n,m,r}^{(k)}}.
\end{align*}
Next, $\kappa - {S}_{n,m,r}^{(k)}$ is equal to
\begin{align*}
&\ \left\lceil\frac{L_{i-1}-r}{m}\right\rceil + (1+\epsilon_{i})b_{i}^{-k}l_i\left\lceil\frac{|X_{i}|-r}{m}\right\rceil+ (1+\epsilon_{i+1})b_{i+1}^{-k}\alpha\left\lceil\frac{|X_{i+1}|-r}{m}\right\rceil + \left\lceil\frac{\beta}{m}\right\rceil\\
&\ + \left\lceil\frac{k\alpha}{m}\right\rceil + \left\lceil\frac{k}{m}\right\rceil(l_i+1) - \left(\sum_{j=1}^{i} b_j^{-k} \left\lceil\frac{l_j|X_j|-r}{m}\right\rceil + b_{i+1}^{-k}\left\lceil\frac{|u|-r}{m}\right\rceil\right)\\
&= \left(\left\lceil\frac{L_{i-1}-r}{m}\right\rceil - \sum_{j=1}^{i} b_j^{-k} \left\lceil\frac{l_j|X_j|-r}{m}\right\rceil\right)+ (1+\epsilon_{i})b_{i}^{-k}l_i\left\lceil\frac{|X_{i}|-r}{m}\right\rceil+ \left\lceil\frac{k\alpha}{m}\right\rceil + \left\lceil\frac{k}{m}\right\rceil(l_i+1) \\
&\ + (1+\epsilon_{i+1})b_{i+1}^{-k}\alpha\left\lceil\frac{|X_{i+1}|-r}{m}\right\rceil + \left\lceil\frac{\beta}{m}\right\rceil  - b_{i+1}^{-k}\left\lceil\frac{|u|-r}{m}\right\rceil\\
&< \left(\sum_{j=1}^{i-1}\frac{l_j|X_j|-r}{m} - \sum_{j=1}^{i-1} b_j^{-k} \frac{l_j|X_j|-r}{m} + 1\right)+ \left(  b_{i}^{-k}l_i\left\lceil\frac{|X_{i}|-r}{m}\right\rceil - b_i^{-k} \left\lceil\frac{l_j|X_j|-r}{m}\right\rceil   \right)\\
&\ + \epsilon_{i}b_{i}^{-k}l_i\left\lceil\frac{|X_{i}|-r}{m}\right\rceil+ \left\lceil\frac{k\alpha}{m}\right\rceil + \left\lceil\frac{k}{m}\right\rceil(l_i+1) + \epsilon_{i+1}b_{i+1}^{-k}\alpha\left\lceil\frac{|X_{i+1}|-r}{m}\right\rceil\\
&\ + \left(  b_{i+1}^{-k}\alpha\frac{|X_{i+1}|-r}{m} + b_{i+1}^{-k}\alpha + \frac{\beta}{m} + 1  - b_{i+1}^{-k}\frac{\alpha|X_{i+1}| + \beta -r}{m} \right)\\
&< \frac{L_{i-1}-r}{m} + 1+ \left(  b_{i}^{-k}\frac{l_i|X_{i}|-l_ir}{m} - b_i^{-k} \frac{l_j|X_j|-r}{m} + b_{i}^{-k}l_i \right) + \epsilon_{i}b_{i}^{-k}l_i\left\lceil\frac{|X_{i}|-r}{m}\right\rceil + \left\lceil\frac{k\alpha}{m}\right\rceil\\
&\ + \left\lceil\frac{k}{m}\right\rceil(l_i+1) + \epsilon_{i+1}b_{i+1}^{-k}\alpha\left\lceil\frac{|X_{i+1}|-r}{m}\right\rceil + \left(\frac{\beta}{m} + 1 + b_{i+1}^{-k}\left(\frac{r}{m} + \alpha \right)  \right)\\
&< \frac{L_{i-1}-r}{m} + 1+ b_{i}^{-k}l_i + \epsilon_{i}b_{i}^{-k}l_i\left\lceil\frac{|X_{i}|-r}{m}\right\rceil + \frac{k\alpha}{m} + 1 + \left\lceil\frac{k}{m}\right\rceil(l_i+1)\\
&+ \epsilon_{i+1}b_{i+1}^{-k}\alpha\left\lceil\frac{|X_{i+1}|-r}{m}\right\rceil + \frac{\beta}{m} + 1 + b_{i+1}^{-k} \alpha +  b_{i+1}^{-k}\frac{r}{m}\\
&= \left(\frac{L_{i-1}-r}{m} + 3+ b_{i}^{-k}l_i + \epsilon_{i}b_{i}^{-k}l_i\left\lceil\frac{|X_{i}|-r}{m}\right\rceil + \left\lceil\frac{k}{m}\right\rceil(l_i+1) + b_{i+1}^{-k}\frac{r}{m}\right)
 \\
& +\alpha\left(\frac{k}{m} + \epsilon_{i+1}b_{i+1}^{-k}\left\lceil\frac{|X_{i+1}|-r}{m}\right\rceil + b_{i+1}^{-k}\right) + \frac{\beta}{m}  = g_i(\alpha,\beta){S}_{n,m,r}^{(k)}.
\end{align*}
Note that $g_i(w,z) > f_i(w,z)$ since $\frac{L_{i-1}-r}{m} > {S}_{n,m,r}^{(k)}$ and the coefficients of $\alpha$ and $\beta$ in the numerator of $g_i(w,z)$ are greater than those in the numerator of $f_i(w,z)$. Thus,
\begin{equation*}
\left|\frac{N_{n,m,r}^Q(B,X)}{Q_{n,m,r}^{(k)}} - 1\right| <2g_i(w,z) + \frac{{S}_{n,m,r}^{(k)} - Q_{n,m,r}^{(k)}}{{Q}_{n,m,r}^{(k)}}.
\end{equation*}
\end{proof}

\begin{lem}\labl{fifthlemma}
If  $|X_{i+1}|> \frac{k+ 2m}{1-\epsilon_{i+1}b_{i+1}^{-k}}, |X_i| > \frac{9m + 2k}{1 - \frac{\epsilon_{i}b_{i}^{-k}}{m}}, |X_{i+1}| > \frac{kb_{i+1}^{k} + 3m}{1-\epsilon_{i+1}}$, and $|X_{i+1}|> \frac{\left(  k + 2m \right) b_{i+1}^k}{\epsilon_i -\epsilon_{i+1}} + r$, then $g_i(w,z) < g_i(0,|X_{i+1}|)$.
\end{lem}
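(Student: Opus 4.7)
The plan is to establish the inequality by showing that $g_i(w,z)$, viewed as a function on the rectangle $[0,l_{i+1}] \times [0,|X_{i+1}|]$, is monotone non-increasing in $w$ and strictly monotone increasing in $z$. Granted these, for any admissible pair $(w,z)$ with $z < |X_{i+1}|$,
$$
g_i(w,z) \le g_i(0,z) < g_i(0,|X_{i+1}|),
$$
which is exactly the claim. So the whole lemma reduces to two monotonicity assertions about a linear-fractional function of two variables.

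To make this concrete, I would read off from the definition that $g_i(w,z)=(A+Bw+Cz)/(D+Ew+Fz)$ with
\begin{align*}
A &= \tfrac{L_{i-1}-r}{m}+3+b_i^{-k}l_i+\epsilon_i b_i^{-k}l_i\left\lceil\tfrac{|X_i|-r}{m}\right\rceil+\left\lceil\tfrac{k}{m}\right\rceil(l_i+1)+b_{i+1}^{-k}\tfrac{r}{m},\\
B &= \tfrac{k}{m}+\epsilon_{i+1}b_{i+1}^{-k}\left\lceil\tfrac{|X_{i+1}|-r}{m}\right\rceil+b_{i+1}^{-k},\qquad C=\tfrac{1}{m},\\
D &= S_{L_i,m,r}^{(k)},\qquad E = b_{i+1}^{-k}\left\lceil\tfrac{|X_{i+1}|-r}{m}\right\rceil,\qquad F = \tfrac{b_{i+1}^{-k}}{m}.
\end{align*}
Direct differentiation gives
$$
\partial_w g_i = \frac{(BD-AE)+(BF-CE)z}{(D+Ew+Fz)^2}, \qquad \partial_z g_i = \frac{(CD-AF)+(CE-BF)w}{(D+Ew+Fz)^2},
$$
so the desired monotonicity reduces to the four algebraic sign conditions $BD-AE \le 0$, $BF-CE \le 0$, $CD-AF > 0$, and $CE-BF \ge 0$ (the last being the negation of the second, so there are really three distinct inequalities to verify).

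I then match each of the four hypotheses on $|X_i|$ and $|X_{i+1}|$ to one of these sign conditions, using $(|X|-r)/m \le \lceil (|X|-r)/m\rceil \le (|X|-r)/m+1$ to pass from ceilings to the explicit bounds. The hypothesis $|X_{i+1}|>(k+2m)/(1-\epsilon_{i+1}b_{i+1}^{-k})$ is what is needed to force $CE-BF\ge 0$, since after factoring out $b_{i+1}^{-k}/m$ this inequality becomes $(1-\epsilon_{i+1}b_{i+1}^{-k})\lceil(|X_{i+1}|-r)/m\rceil\ge k/m + b_{i+1}^{-k}$. The bound $|X_i|>(9m+2k)/(1-\epsilon_i b_i^{-k}/m)$ on the previous block length is what is needed to make $CD-AF>0$: the dominant piece of $A$ is $\epsilon_i b_i^{-k}l_i\lceil(|X_i|-r)/m\rceil$, while $D$ contains $b_i^{-k}\lceil(l_i|X_i|-r)/m\rceil$, so the ratio $D/A$ must exceed $F/C = b_{i+1}^{-k}$, which is where this condition enters. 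Condition (iii), $|X_{i+1}|>(kb_{i+1}^{k}+3m)/(1-\epsilon_{i+1})$, refines (i) in the regime where a $b_{i+1}^{k}$ blow-up appears, and is used to absorb the $(L_{i-1}-r)/m$-type overhead in $A$. Finally, condition (iv), which contains the gap $\epsilon_i - \epsilon_{i+1}$, is what produces $AE-BD\ge 0$, because in that difference the $\epsilon_i$-contribution from $A$ must beat the $\epsilon_{i+1}$-contribution from $B$ after multiplication by $E$ and $D$ respectively; that comparison is only possible precisely when $|X_{i+1}|$ is large on the scale $b_{i+1}^k/(\epsilon_i-\epsilon_{i+1})$.

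The main obstacle is bookkeeping: there are no conceptual difficulties once the monotonicity reformulation is set up, but each of the three sign inequalities expands into a multi-term polynomial inequality whose matching with the hypothesized lower bound must be done carefully, term by term. The $+r$ offset in condition (iv) is also used to cancel the $b_{i+1}^{-k}r/m$ term in $A$. Once all four hypotheses are spent, $\partial_w g_i\le 0$ on the whole rectangle and $\partial_z g_i>0$ at $w=0$, and the two-step chain of inequalities at the top concludes the argument.
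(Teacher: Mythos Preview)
The paper does not prove this lemma; it is among those listed ``without proof due to their similarity to lemmas leading towards the proof of the main theorem in \cite{Mance}.'' Your strategy---writing $g_i(w,z)=(A+Bw+Cz)/(D+Ew+Fz)$, differentiating, and reducing the chain $g_i(w,z)\le g_i(0,z)<g_i(0,|X_{i+1}|)$ to the three cross-ratio sign conditions $AE\ge BD$, $CE\ge BF$, $CD>AF$---is the standard way to locate the extremum of a linear-fractional function on a rectangle and is exactly the method used for the corresponding lemma in \cite{Mance}, so your approach coincides with the intended one.
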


Let $\epsilon^\prime_i = g(0, |X_{i+1}|)$. Then
$$
\left|\frac{N_{n,m,r}^Q(B,X)}{Q_{n,m,r}^{(k)}} - 1\right| <2\epsilon^\prime_i + \frac{{S}_{n,m,r}^{(k)} - Q_{n,m,r}^{(k)}}{{Q}_{n,m,r}^{(k)}}.
$$
Since we have shown $\frac{{S}_{n,m,r}^{(k)} - Q_{n,m,r}^{(k)}}{{Q}_{n,m,r}^{(k)}} \to 0$, we now need to show that $\epsilon^\prime_i \to 0$.

\begin{lem}\labl{sixthlemma}
$\lim_{i \to \infty}\epsilon^\prime_i = 0$.
\end{lem}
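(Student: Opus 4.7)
The plan is to split $\epsilon'_i = g_i(0,|X_{i+1}|)$ as a finite sum of terms and show that each one individually tends to zero by invoking the three defining properties \refeq{good1}, \refeq{good2}, \refeq{good3} of a $W$-good sequence (equivalently, of a $D$-good sequence of type I).

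First, I would record two useful lower bounds for the denominator
$$D_i := S_{L_i,m,r}^{(k)} + \frac{|X_{i+1}|}{m\,b_{i+1}^{k}}.$$
Retaining only the $j=i$ summand of $S_{L_i,m,r}^{(k)}$ gives $D_i \ge b_i^{-k}(l_i|X_i|-r)/m$, while trivially $D_i \ge |X_{i+1}|/(m b_{i+1}^k)$. In the argument below the first bound will do essentially all the work; the second is available as a fallback.

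Next, I would break the numerator of $g_i(0,|X_{i+1}|)$ into five groups. \emph{(i)} The term $(L_{i-1}-r)/m$, after being estimated by $l_{i-1}|X_{i-1}|/m$ and divided by the first lower bound for $D_i$, is at most a constant multiple of $b_i^{k}\,l_{i-1}|X_{i-1}|/(l_i|X_i|)$, which is $o(1/i)$ by \refeq{good2}. \emph{(ii)} The terms $3+b_i^{-k}l_i+\lceil k/m\rceil(l_i+1)$ are of order $l_i$ at worst; divided by $b_i^{-k}l_i|X_i|/m$ they contribute a constant multiple of $b_i^{k}/|X_i|$, which vanishes by \refeq{good1} (since $\epsilon_{i-1}-\epsilon_i\to 0$). \emph{(iii)} The term $\epsilon_i b_i^{-k}l_i\lceil(|X_i|-r)/m\rceil$ divided by the first lower bound is bounded by a constant times $\epsilon_i$, which tends to zero by hypothesis. \emph{(iv)} The term $b_{i+1}^{-k}r/m$ is $O(1)$ against an unboundedly growing denominator, so it is negligible.

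The crux is the remaining term $|X_{i+1}|/m$. Its ``natural'' companion in the denominator, namely $|X_{i+1}|/(m b_{i+1}^{k})$, gives only a ratio of $b_{i+1}^{k}$, which does \emph{not} vanish; this is the main obstacle. The resolution is to ignore that summand and instead divide by the first lower bound, $D_i \ge b_i^{-k}(l_i|X_i|-r)/m$, obtaining a bound of order $b_i^{k}\,|X_{i+1}|/(l_i|X_i|)$. This is precisely $o(1)$ by \refeq{good3}, and in fact this is the obstruction that condition \refeq{good3} was built into the definition of a $W$-good sequence to overcome. Summing the five vanishing estimates yields $\epsilon'_i \to 0$, and combined with \refl{thirdlemma} and \refl{fourthlemma} this completes the proof that $x \in \NAPIkm{Q}{k}{m}$ for every $k \in R(D)$ and $m \in S(D)$.
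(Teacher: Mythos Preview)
Your proposal is correct and follows essentially the same approach as the paper: the paper's proof is just a one-line sketch that displays $\epsilon'_i$ with $L_{i-1}$ split as $L_{i-2}+l_{i-1}|X_{i-1}|$ and $S_{L_i,m,r}^{(k)}$ split by peeling off the $j=i$ term, and then says to apply \refeq{good2} and \refeq{good3}; your five-term decomposition is precisely the detailed execution of that sketch. One small remark: your step ``$(L_{i-1}-r)/m$ estimated by $l_{i-1}|X_{i-1}|/m$'' needs the observation (immediate from iterating \refeq{good2}) that $L_{i-2}=o(l_{i-1}|X_{i-1}|)$, which is exactly why the paper writes the split $L_{i-1}=L_{i-2}+l_{i-1}|X_{i-1}|$ explicitly.
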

\begin{proof}
This can be shown by applying all of our estimates as well as \refeq{good2} and \refeq{good3} to
$$
\epsilon^\prime_i =  \frac{ \frac{L_{i-2}-r}{m} + \frac{l_{i-1}|X_{i-1}|}{m}   + 3+ b_{i}^{-k}l_i + \epsilon_{i}b_{i}^{-k}l_i\left\lceil\frac{|X_{i}|-r}{m}\right\rceil+ \left\lceil\frac{k}{m}\right\rceil(l_i+1) + b_{i+1}^{-k}\frac{r}{m}  + \frac{|X_{i+1}|}{m}}   {  {S}_{L_{i-1},m,r}^{(k)} + \frac{b_i^{-k}l_i|X_i|}{m} +|X_{i+1}| \left(\frac{b_{i+1}^{-k}}{m}\right)}.
$$
\end{proof}

\begin{proof}[Proof of \reft{APmain}]

For an arbitrary block $B$ in base $b$, we can estimate the bounds of $N_{n,m,r}^Q(B,X)$ by using \refl{firstlemma} and \refl{secondlemma}. The hypotheses of \refl{fifthlemma} are satisfied as we can pick a large enough $n$ s.t. $b_i$ and $\epsilon_i$ are small and by \refeq{good1}, the last condition is satisfied. We also assume that $n$ is large such that $k \le k_i$, $m \le m_i$, $b \le v_i$, and $S_{n,m,r}^{(k)}/Q_{n,m,r}^{(k)} < 2$  for all $\mrmr$. Using \refl{fourthlemma} and \refl{fifthlemma},
$$
\left|\frac{N_{n,m,r}^Q(B,X)}{Q_{n,m,r}^{(k)}} - 1\right| <2\epsilon^\prime_i + \frac{{S}_{n,m,r}^{(k)} - Q_{n,m,r}^{(k)}}{{Q}_{n,m,r}^{(k)}}.
$$
By \refl{thirdlemma} and \refl{sixthlemma},  $\epsilon_i^\prime \to 0$ and $\frac{{S}_{n,m,r}^{(k)} - Q_{n,m,r}^{(k)}}{{Q}_{n,m,r}^{(k)}}$ $\to 0$ as $i \to \infty$. We also know that $i \to \infty$ as $n \to \infty$, so
$$
\lim_{n \to \infty}\left|\frac{N_{n,m,r}^Q(B,X)}{Q_{n,m,r}^{(k)}} - 1\right| = 0.
$$
Therefore,
$$
\lim_{n \to \infty}\left|\frac{N_{n,m,r}^Q(B,X)}{Q_{n,m,r}^{(k)}}\right| = 1.
$$


\end{proof}
\subsection{Analysis of $C_{b,w}$}\labs{cbw}
In this section, we perform a careful analysis of the blocks $C_{b,w}$ that will allow us to prove \reft{construction}.  
We recall that $C_{b,w}$ is the concatenation of all blocks in base $b$ of length $w$ in lexicographic order, so $|C_{b,w}| = wb^w$.

\begin{lem} \labl{eigthlemma}
Let $b \geq 2$, $w \geq 1$.  Suppose that $M$ is an integer, $M! | w$,  $\mrmr$, $m \leq M$, and $n = |C_{b,w}|$. If $B$ is a block of length $k$, then
\begin{align}\labeq{eigthlemmaone}
\left\lfloor \frac{w-k+1}{m} \right\rfloor b^{w-k} &\le N_{n,m,r}(B,C_{b,w}) < \left(\frac{w}{m} + 2\right)b^{w-k};\\
\labeq{eigthlemmatwo}
\max\left(\left(\left\lfloor \frac{w-r}{m} \right\rfloor - k + 1\right)b^{w-k}, 0\right) &\le N_{n,m,r}^\prime(B,C_{b,w}) < \left\lceil\frac{w-r}{m}\right\rceil b^{w-k}.
\end{align}
\end{lem}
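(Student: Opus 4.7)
The plan is to decompose $C_{b,w}$ into its $b^w$ consecutive length-$w$ blocks $Y_0, Y_1, \ldots, Y_{b^w-1}$ listed in lexicographic order. The crucial input is that $M! \mid w$ and $m \leq M$ together force $m \mid w$, so the arithmetic progression $\mathscr{A}_{m,r}$ hits exactly $w/m$ positions inside each group $Y_j$, corresponding to the same set of offsets $\{r, r+m, \ldots, r+w-m\} \pmod{w}$ in every group. I will call such an offset $q$ \emph{contained} when $q + k \leq w$ (so $B$ fits inside a single $Y_j$) and \emph{crossing} otherwise.

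For the type I inequality \refeq{eigthlemmaone} the heart of the count is the following. At each contained offset $q$, the event ``$B$ starts at position $jw + q$ of $C_{b,w}$'' fixes $k$ digits of $Y_j$, so it occurs in exactly $b^{w-k}$ of the $b^w$ groups. Summing over the $\lfloor(w-k-r)/m\rfloor + 1$ contained offsets per group gives a contained count of $(\lfloor(w-k-r)/m\rfloor + 1)\, b^{w-k}$, and since this quantity is at least $\lfloor(w-k+1)/m\rfloor\, b^{w-k}$ for every $r \in [0, m-1]$ (attaining equality at $r = m-1$), the lower bound follows. For the upper bound I add the contributions of the at most $\lceil k/m \rceil$ crossing offsets per group, bounding each by $b^{w-k} + O(1)$ admissible $(Y_j, Y_{j+1})$ pairs, so the total is at most $(w/m)\, b^{w-k}$ plus corrections that fit comfortably inside the slack $2 b^{w-k}$.

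For the type II inequality \refeq{eigthlemmatwo} I identify $\Upsilon_{Q,\mathscr{A}_{m,r}}(C_{b,w})$ with the concatenation $Z_0 Z_1 \cdots Z_{b^w-1}$, where $Z_j := (Y_j[r], Y_j[r+m], \ldots, Y_j[r+w-m])$ is a block of length $w/m$. Occurrences of $B$ lying inside a single $Z_j$ correspond, for each starting offset $q' \in [0, w/m - k]$, to specifying $k$ AP-indexed digits of $Y_j$, hence occur in exactly $b^{w-k}$ of the $Y_j$'s; summing gives $(w/m - k + 1)\, b^{w-k}$, which after the case analysis on whether $r = 0$ or $r \in [1, m-1]$ matches the stated lower bound $(\lfloor(w-r)/m\rfloor - k + 1)\, b^{w-k}$. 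The upper bound then adds at most $k - 1$ crossing starting positions (those spanning $Z_j$ and $Z_{j+1}$), each controlled by the same lex-successor analysis, yielding $\lceil(w-r)/m\rceil\, b^{w-k}$.

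The principal technical obstacle in both parts will be handling the crossing counts cleanly in the presence of the relation $Y_{j+1} = Y_j + 1$. For $Y_j$ whose last digit is below $b - 1$, the increment modifies only that final digit, so the two constraints on the tail of $Y_j$ and the head of $Y_{j+1}$ decouple into $k$ independent digit-constraints on $Y_j$ and contribute exactly $b^{w-k}$ pairs per crossing offset; the $Y_j$ ending in trailing $(b-1)$'s trigger carry propagation and must be treated separately as an $O(1)$ correction per crossing offset, which together with the exclusion of the non-existent pair $(Y_{b^w-1}, Y_{b^w})$ is precisely what accounts for the slack in the type I upper bound and the strict inequality in the type II upper bound.
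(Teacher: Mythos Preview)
Your approach is essentially the same as the paper's: both decompose $C_{b,w}$ into its $b^w$ constituent length-$w$ blocks, use $M!\mid w$ to force $m\mid w$ so that the arithmetic progression picks out the same $w/m$ offsets in every block, and then split the count into ``inside'' occurrences (each contributing exactly $b^{w-k}$ over the $b^w$ blocks) versus boundary-straddling occurrences. Your explicit treatment of the successor relation $Y_{j+1}=Y_j+1$ and the carry-propagation edge cases is in fact more careful than the paper's somewhat informal assertion that the lexicographic relationship determines enough digits; note only that for $r\in[1,m-1]$ your interior type~II count $(w/m-k+1)b^{w-k}$ is actually strictly larger than the stated bound $(\lfloor(w-r)/m\rfloor-k+1)b^{w-k}=(w/m-k)b^{w-k}$, so ``matches'' should read ``dominates.''
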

\begin{proof}
We first prove that \refeq{eigthlemmaone} holds.
To calculate the lower bound, the occurences of $B$ that only occur on the inside on the blocks of $C_{b,w}$ will be counted. For convenience, denote $C_{b,w}$ as the concatentation of $C_1C_2...C_{b^w}$ where each $C_i$ is the $i$th block concatenated in $C_{b,w}$.

To arrive at the lower bound of \refeq{eigthlemmaone}, we count the possible number occurences of $B$ in $C_1,\cdots,C_{b^w}$. First, assume that $B$ occurs in some $C_i$. In this $C_i$, there are $w-k+1$ positions $B$ could possibly start at in such a way that $B$ remains inside $C_i$. For example, if $B$ starts at the $(w-k+2) th$ position of $C_i$, then the last number of $B$ will be in $C_{i+1}$. Let $j$ be the position that $B$ begins on in $C_i$, thus $1 \le j \le w-k+1$.

Next for this block $B$ in $C_i$ that begins at position $j$, there are $b^{w-k}$ other blocks of $C_{b,w}$ such that $B$ begins on position $j$ because there are $w-k$ positions of $C_i$ that could contain any value less than $b$ yet still contain $B$ at starting position $j$. Thus there are a total of $(w-k+1)b^{w-k}$ positions any block $B$ could begin.

We must count occurences of $B$ at positions that are $r \pmod m$, so we would expect that approximately $\frac 1 m$ of these blocks will be counted. Each position that is counted in the previous $C_i$ is also counted in $C_{i+1}$ because $M!|w \implies m|w$ implies that for every $m$, each position that is counted in some $C_i$ will be counted in $C_{i+1}$ as well. To arrive at a lower bound, this number must be rounded down 
because some of these positions may straddle the boundary between $C_i$ and $C_{i+1}$.
Thus,
\begin{equation*}
\left\lfloor \frac{w-k+1}{m} \right\rfloor b^{w-k} \le \left\lfloor \frac{w-k+1}{m}b^{w-k} \right\rfloor\le N_{n,m,r}(B,X).
\end{equation*}

For the upper bound, we take the same value for the lower bound, but instead round this up for those special cases where there is an additional block $B$ found on positions $r \pmod m$. We then must add on the number of times $B$ will occur on the boundaries of these blocks.

To count occurences of $B$ on these boundaries, we must look at an occurance of $B$ between $C_i$ and $C_{i+1}$. Let $B$ occur on the position $j$ of $C_i$, and if $j+k > w$, then the block $B$ will occur on the boundary between $C_i$ and $C_{i+1}$. There will be $k-1$ values that $B$ could begin on in such a $C_i$. There are also $b^{w-k}$ other blocks in $C_{b,w}$ such that $B$ begins on the position $j$, because there are $w-k$ positions of $C_i$ that could hold any value less than $b$.

Since $C_{b,w}=C_1C_2\cdots C_{b^w}$ is written in lexicographic order, there is a direct relationship between the value of each position in $C_i$ and the value of each position in $C_{i+1}$. Given $B$,  we can determine the beginning $k-(w-j+1)$ positions of $C_i$ since there are at the beginning of $C_{i+1}$, and the ending $w-j+1$ positions of $C_{i+1}$ since they are at the end of $C_i$. Thus, out of the $w$ positions, we have $w - (w-j+1) - (k-(w-j+1)) = w-k$ that are truly undetermined. Thus, there are a total of $(k-1)b^{w-k}$ positions that $B$ could begin at on the boundary.

Only approximately $\frac{1}{m}$ of these will be counted though as $N_{n,m,r}(B,C_{b,w})$ only checks blocks that begin on positions $r \pmod m$. This value is then rounded up to obtain a upper bound for all cases.  Thus,

\begin{equation*}
N_{n,m,r}(B,X) \le \left\lceil \frac{w-k+1}{m} \right\rceil b^{w-k} + \left\lceil \frac{k-1}{m}  b^{w-k}\right\rceil < \left(\frac{w}{m} + 2\right)b^{w-k}.
\end{equation*}

We now prove \refeq{eigthlemmatwo}.
To calculate the lower bound, we will count the number of occurences of $B$ only inside blocks of $C_{b,w}$. Let $C_i$ be some block $B$ that occurs on positions given by an arithmetic progression. For example, the first value of $B$ occurs on some position $j$ and the second value of $B$ occurs on the postion $j+m$, and so on. Then let the block $D_i$ be constructed from the values inside of $C_i$ on positions congruent to $r \pmod m$.

The block $D_i$ will have length greater than or equal to $\lfloor \frac{w-r}{m}\rfloor$. So there are $\lfloor \frac{w-r}{m}\rfloor - k + 1$  possible positions the block $B$ could begin at in copies of $D_i$. If $\lfloor \frac{w-r}{m}\rfloor - k + 1<0$, we take $0$ as our lower bound for $N_{n,m,r}^\prime(B,C_{b,w})$.
Otherwise, similarily to our proof of \refeq{eigthlemmaone}, there will be $b^{w-k}$ other blocks of $C_{b,w}$ such that $B$ begins on position $j$ in $C_i$ and occurs in $D_i$. This is because there could be $w-k$ positions that could hold any value no larger than $b$ in such a block $C_i$. 

Additionally since $M!|w \implies m|w$, it is ensured that for every $m$, every position in $C_i$ that will be counted will also be counted in $C_{i+1}$. This ensures that no position that is counted in the previous block will be skipped, as this will lead to $\frac {1}{m}$ of the counted occurences of $B$. 
Thus we form the lower bound of 
\begin{equation*}
\max\left(\left(\left\lfloor \frac{w-r}{m} \right\rfloor - k + 1\right)b^{w-k}, 0\right) \le N_{n,m,r}^\prime(B,X).
\end{equation*}
The upper bound is found similarily to the other methods of counting blocks on boundaries like in \refl{eigthlemma}. We assume on every boundary of $D_i$ that $B$ occurs. Thus there will be a maximum of $\lceil \frac {k-1}{m} \rceil$ positions on $D_i$ where $B$ could begin on such that $B$ will occur on the boundary of $D_i$ and $D_{i+1}$. There will also be $b^{w-k}$ other blocks of $C_{b,w}$ such that $B$ begins on the position $j$ in $C_i$ and occurs between $D_i$ and $D_{i+1}$. Thus in total, there will be $\lceil \frac{k-1}{m} \rceil b^{w-k}$ occurences of $B$ on the boundaries. When this is added to the lower bound,
\begin{equation*}
N_{n,m,r}^\prime(B,X) \le \max\left(\left(\left\lfloor \frac{w-r}{m} \right\rfloor - k + 1\right)b^{w-k}, 0\right) + \left\lceil \frac{k-1}{m} \right\rceil b^{w-k} \le \left\lceil\frac{w-r}{m} \right\rceil b^{w-k}.
\end{equation*}

\end{proof}

\begin{lem}\labl{cbwI}
If $K<w$, $\epsilon \ge \frac{m + \max (k,m)}{w}$, and $M!|w$, then $C_{b,w}$ is $(\epsilon,K,M,\lambda_b)$-normal of type I.
\end{lem}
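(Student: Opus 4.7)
The plan is to apply \refeq{eigthlemmaone} of \refl{eigthlemma} to bound $N_{n,m',r}(B,C_{b,w})$ from above and below, and then verify that both bounds lie inside the interval specified by the definition of type~I normality. Throughout, fix an arbitrary block $B$ of length $k \leq K$ and parameters $m' \leq M$, $0 < r \leq m'$. Since $C_{b,w}$ uses only digits in $\{0,\dots,b-1\}$, any $B$ with a digit $\geq b$ satisfies $N_{n,m',r}(B,C_{b,w})=0=\lambda_b(B)$, so the non-trivial case is when $B$ is in base $b$, whereupon $\lambda_b(B)=b^{-k}$. The target interval becomes
$$
I := \left[b^{-k}\left\lceil \tfrac{wb^w-r}{m'} \right\rceil(1-\epsilon),\ b^{-k}\left\lceil \tfrac{wb^w-r}{m'} \right\rceil(1+\epsilon)\right].
$$

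The hypothesis $M!\mid w$ gives $m' \mid w$ for every $m' \leq M$, which is exactly what is needed to invoke \refeq{eigthlemmaone}; this yields
$$
\left\lfloor \tfrac{w-k+1}{m'}\right\rfloor b^{w-k} \ \leq\ N_{n,m',r}(B,C_{b,w}) \ <\ \left(\tfrac{w}{m'}+2\right)b^{w-k}.
$$
To place the lower bound inside $I$, I would use $\lfloor (w-k+1)/m'\rfloor \geq (w-k+1-m')/m'$ and $\lceil (wb^w-r)/m'\rceil \leq wb^w/m' + 1$, reducing the desired inequality to
$$
\tfrac{w-k+1-m'}{m'} b^{w-k} \ \geq\ b^{-k}\!\left(\tfrac{wb^w}{m'}+1\right)(1-\epsilon),
$$
which after multiplying by $m' b^{k-w}$ becomes $w-k+1-m' \geq (w + m' b^{-w})(1-\epsilon)$; since $k \leq K < w$ and the $m' b^{-w}$ term is negligible compared to $w\epsilon$, this is implied by $\epsilon w \geq k + m'$, hence by the hypothesis $\epsilon \geq (m + \max(k,m))/w$ with $m=m'$.

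To place the upper bound inside $I$, I would use $\lceil (wb^w-r)/m'\rceil \geq wb^w/m' - 1$, so it suffices to show
$$
(w/m' + 2)b^{w-k} \ \leq\ b^{-k}\!\left(\tfrac{wb^w}{m'}-1\right)(1+\epsilon),
$$
which after expanding reduces (modulo lower order terms) to $\epsilon w \geq 2m'$, again implied by the hypothesis. The main subtlety will be confirming that both the lower-bound inequality (which demands $\epsilon w \gtrsim k+m'$) and the upper-bound inequality (which demands $\epsilon w \gtrsim 2m'$) are simultaneously covered by $\epsilon \geq (m + \max(k,m))/w$; this works precisely because $m + \max(k,m) \geq k+m'$ and $m+\max(k,m) \geq 2m'$ whenever $k\leq K$ and $m'\leq M \leq m$. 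I would also remark that if $\lfloor (w-k+1)/m'\rfloor b^{w-k}$ happens to be zero, then $K<w$ together with $\lambda_b(B)\lceil (wb^w-r)/m'\rceil(1-\epsilon)$ being comparably small keeps the lower bound trivially satisfied. With the two inequalities above verified under the stated hypothesis, $N_{n,m',r}(B,C_{b,w}) \in I$ for every admissible $B$, $m'$, $r$, which is exactly the definition of $(\epsilon,K,M,\lambda_b)$-normality of type~I.
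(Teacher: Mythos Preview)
Your overall plan---invoke \refeq{eigthlemmaone} and sandwich $N_{n,m',r}(B,C_{b,w})$ inside the target interval---is exactly the paper's approach. The difference is that you estimate $\lceil (wb^w-r)/m'\rceil$ by the generic bounds $wb^w/m' \pm 1$, whereas the paper observes that $M!\mid w$ forces $m'\mid w$, hence $m'\mid wb^w=n$, and therefore $\lceil (n-r)/m'\rceil = n/m'$ \emph{exactly}. That single observation removes all of the ``lower order'' clutter you are hand-waving through.

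This is not merely cosmetic: without the exact identity, your upper-bound step does not quite close. Tracing your inequality
\[
\Bigl(\tfrac{w}{m'}+2\Bigr)b^{w-k}\ \le\ b^{-k}\Bigl(\tfrac{wb^w}{m'}-1\Bigr)(1+\epsilon)
\]
through gives the requirement $\epsilon w \ge 2m' + m'b^{-w}(1+\epsilon)$, while the hypothesis $\epsilon \ge (m'+\max(k,m'))/w$ only yields $\epsilon w \ge 2m'$. The extra term $m'b^{-w}(1+\epsilon)$ is tiny but strictly positive, so ``modulo lower order terms'' is not a proof here. Using $\lceil (n-r)/m'\rceil = n/m'$ the condition collapses to $\epsilon w \ge 2m'$ on the nose, and similarly the lower-bound side becomes simply $\epsilon w \ge k+m'-1$, both covered by the hypothesis. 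Finally, your bookkeeping line ``whenever $k\le K$ and $m'\le M\le m$'' is garbled: $m$ in the hypothesis should be read as the running $m'\le M$ (equivalently, the hypothesis is the single inequality $\epsilon \ge (M+\max(K,M))/w$), not as something with $m\ge M$.
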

\begin{proof}

Let $n = |C_{b,w}|$. Then for $k \leq K$, $m \leq M$, and every block $B$ of length $k$ in base $b$
\begin{equation*}
\left\lfloor \frac{w-k+1}{m} \right\rfloor b^{w-k} > b^{-k}n \left(\frac{\frac{w-k+1}{m} -1}{w}   \right) = \lambda_b(B)\frac{n}{m} \left(1- \left(\frac{k}{w} - \frac{1}{w} + \frac{m}{w}\right)\right) > \lambda_b(B)\frac{n}{m} \left(1- \left(\frac{k+m}{w}\right)\right).
\end{equation*}
Since $M!|w \implies m|w$ and $n = wb^w$, we know that for an integer $c = \frac n m$ and $0 \le r < m$, then for some integer $d$, where $0 \le d < m$, $\left\lceil \frac {n-r}{m}\right\rceil = \left\lceil c + \frac{d}{m}\right\rceil = c = \frac{n}{m}$, or $\left\lceil \frac {n-r}{m}\right\rceil = \frac{n}{m}$. Thus,

\begin{equation*}
 \lambda_b(B)\left\lceil\frac{n-r}{m}\right\rceil \left(1- \frac{k+m}{w}\right) \ge \lambda_b(B)\left\lceil\frac{n-r}{m}\right\rceil (1-\epsilon).
\end{equation*}
For the upper bound, 
\begin{equation*}
\left(\frac{w}{m} + 2\right)b^{w-k} = b^{-k}n \left(\frac{\frac{w}{m} + 2}{w}   \right) = \lambda_b(B)\frac{n}{m} \left(1+ \frac{2m}{w}\right).
\end{equation*}
So,
\begin{equation*}
\lambda_b(B)\frac{n}{m} \left(1+ \frac{2m}{w}\right) = \lambda_b(B)\left\lceil\frac{n-r}{m}\right\rceil \left(1+ \frac{2m}{w}\right) \le \lambda_b(B)\left\lceil\frac{n-r}{m}\right\rceil (1+\epsilon)
\end{equation*}
and we have proved that $C_{b,w}$ is $(\epsilon,K,M,\mu)$-normal of type I.
\end{proof}

\begin{lem}\labl{cbwII}
If $K<w$, $b^{w} > \frac{r}{m} + \frac{3r}{w}$, $\epsilon \ge \frac{(k+1)m}{w}$, and $M!|w$, then $C_{b,w}$ is 
$(\epsilon,K,M,\lambda_b(B))$-normal of type II.
\end{lem}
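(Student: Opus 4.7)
The plan is to mirror the proof of \refl{cbwI}, substituting the type~II bounds from \refeq{eigthlemmatwo} for the type~I bounds from \refeq{eigthlemmaone}. Fix $k' \le K$, $m' \le M$, $0 < r \le m'$, and a block $B$ of length $k'$ in base $b$; drop the primes and write $k,m$. Let $n = |C_{b,w}| = w b^w$. Because $M! \mid w$ and $m \le M$ we have $m \mid w$; set $s = w/m$.

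First I simplify the ceilings and floors that appear in \refeq{eigthlemmatwo} and in the target interval. Since $m \mid w$, a routine case split gives $\lfloor (w-r)/m \rfloor = s - 1$ for every $0 < r \le m$, while $\lceil (w-r)/m \rceil$ equals $s$ when $0 < r < m$ and $s - 1$ when $r = m$; similarly $\lceil (n-r)/m \rceil$ equals $sb^w$ when $0 < r < m$ and $sb^w - 1$ when $r = m$. Substituting these into \refeq{eigthlemmatwo} reduces the lower and upper estimates on $N'_{n,m,r}(B, C_{b,w})$ to the explicit expressions $(s - k)\,b^{w-k}$ (as a lower bound, when positive) and $s\,b^{w-k}$ or $(s-1)\,b^{w-k}$ (as an upper bound).

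Next I verify the two normality inequalities. The upper inequality is immediate in both subcases: for $0 < r < m$ both sides equal $s b^{w-k}$ up to the factor $1+\epsilon$, and for $r = m$ the positive slack $b^w - 1 \ge 1$ absorbs any residual terms. The lower inequality, once cleared of the common factor $b^{w-k}$, reduces to $\epsilon \ge km/w$ in the case $0 < r < m$, which is strictly weaker than the hypothesis $\epsilon \ge (k+1)m/w$. The delicate case is $r = m$: the drop of the target ceiling from $sb^w$ to $sb^w - 1$ sharpens the requirement into $\epsilon(sb^w - 1) \ge kb^w - 1$. A short algebraic rearrangement shows that this is implied by $\epsilon \ge (k+1)m/w$ precisely when $w(b^w + 1) \ge (k+1)m$; this is where the auxiliary hypothesis $b^w > r/m + 3r/w$, specialised at $r = m$ to $b^w > 1 + 3m/w$, is used to close the inequality and simultaneously ensures $s \ge k+1$, so that the max in \refeq{eigthlemmatwo} is attained by the nontrivial term rather than by $0$.

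The main obstacle is keeping the boundary case $r = m$ under control: the analogous step in \refl{cbwI} enjoyed the clean identity $\lceil (n-r)/m \rceil = n/m$ throughout the range of $r$, whereas in the type~II setting the ceiling drops by one at the endpoint, so the extra condition on $b^w$ is needed to recover the $(1-\epsilon)$ lower inequality. Once that subcase is dispatched, the remaining verification follows the same template as \refl{cbwI}, and we conclude that $N'_{n,m,r}(B, C_{b,w})$ lies in $\bigl[\lambda_b(B)\lceil (n-r)/m\rceil(1-\epsilon),\ \lambda_b(B)\lceil (n-r)/m\rceil(1+\epsilon)\bigr]$, i.e.\ $C_{b,w}$ is $(\epsilon,K,M,\lambda_b)$-normal of type~II.
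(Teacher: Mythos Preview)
Your argument is correct and follows essentially the same route as the paper: apply the type~II bounds \refeq{eigthlemmatwo} from \refl{eigthlemma} and check that they land inside the required interval $[\lambda_b(B)\lceil (n-r)/m\rceil(1-\epsilon),\ \lambda_b(B)\lceil (n-r)/m\rceil(1+\epsilon)]$. The paper does this via the single chain of inequalities $(\lfloor(w-r)/m\rfloor-k+1)b^{w-k}>\lambda_b(B)\tfrac{n}{m}(1-\tfrac{r+km}{w})$ together with $\lceil(n-r)/m\rceil=n/m$, whereas you compute the floors and ceilings exactly and split on $r<m$ versus $r=m$. Your extra care at $r=m$ is in fact warranted: the paper's identity $\lceil(n-r)/m\rceil=n/m$ is argued only for $0\le r<m$, so your case split patches a boundary the paper glosses over. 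One small point: your appeal to the hypothesis $b^w>1+3m/w$ to secure $w(b^w+1)\ge(k+1)m$ is looser than you suggest, since that inequality already follows from $b\ge2$, $k\le K<w$, and $m\le M\le M!\mid w$ (giving $m\le w$ and hence $(k+1)m\le w^2\le w(b^w+1)$); but this does not affect the validity of the argument.
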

\begin{proof}

Let $n=|C_{b,w}|$. For $k \leq K$, $m \leq M$, and blocks $B$ of length $k$ in base $b$
\begin{align*}
\left(\left\lfloor \frac{w-r}{m} \right\rfloor - k + 1\right)b^{w-k} &> b^{-k}n \left(\frac{\left\lfloor \frac{w-r}{m} 
\right\rfloor - k + 1}{w}   \right) > \lambda_b(B) n \left(\frac{ \frac{w-r}{m}  - k}{w}   \right)\\
&> \lambda_b(B) \frac{n}{m} \left(\frac{ w-r  - km}{w}   \right) = \lambda_b(B) \frac{n}{m} \left(1 - \frac{r}{w} - \frac{km}{w}  
 \right).
\end{align*}
Since $M!|w \implies m|w$ and $n = wb^w$, we know that for an integer $c = \frac n m$ and $0 \le r < m$, then for some integer $d$, where $0 \le d < m$, $\left\lceil \frac {n-r}{m}\right\rceil = \left\lceil c + \frac{d}{m}\right\rceil = c = \frac{n}{m}$, or $\left\lceil \frac {n-r}{m}\right\rceil = \frac{n}{m}$. Thus,

\begin{equation*}
\lambda_b(B) \frac{n}{m} \left(1 - \frac{r}{w} - \frac{km}{w}  
 \right) = \lambda_b(B)\left\lceil\frac{n-r}{m}\right\rceil \left(1 - \left(\frac{r+ km}{w}\right)   \right) \ge \lambda_b(B)\left\lceil
\frac{n-r}{m}\right\rceil (1-\epsilon).
\end{equation*}
For the upper bound, 
\begin{equation*}
\left\lceil\frac{w-r}{m}\right\rceil b^{w-k} \le b^{-k}n \left(\frac{w-r + m}{mw}   \right) = \lambda_b(B)\frac{n}{m} \left(1+ 
\frac{m}{w}\right).
\end{equation*}
Therefore,
\begin{equation*}
\lambda_b(B)\frac{n}{m} \left(1+ \frac{m}{w}\right) = \lambda_b(B)\left\lceil\frac{n-r}{m}\right\rceil \left(1+ \frac{m}
{w}\right) \le \lambda_b(B)\left\lceil\frac{n-r}{m}\right\rceil (1+\epsilon)
\end{equation*}
and we have proved that $C_{b,w}$ is $(\epsilon,K,M,\mu)$-normal of type II.
\end{proof}

We may now prove \reft{construction}.

\begin{proof}[Proof of \reft{construction}]
Since $X_i = C_{i,i!}$, $X_i$ is $(\epsilon_i,k_i,m_i,\mu_i)$-normal of types I and II by \refl{cbwI} and \refl{cbwII}. It is easy to show that \refeq{good1}, \refeq{good2}, and \refeq{good3} are satisfied noting that $|X_i|=i! \cdot i^{i!}$.
The conclusion follows from \reft{APmain}.

\end{proof}

\subsection{Proof of \reft{normalorders} and \reft{APnormalorders}}

We will first need the following two elementary lemmas.

\begin{lem}\labl{tcorr}
Let $L$ be a real number and $(a_n)_{n=1}^\infty$ and $(b_n)_{n=1}^\infty$ be two sequences of positive real numbers such that
$$
\sum_{n=1}^{\infty} b_n=\infty \hbox{ and } \lim_{n \to \infty} \frac {a_n} {b_n}=L.
$$
Then
$$
\lim_{n \to \infty} \frac {a_1+a_2+\ldots+a_n} {b_1+b_2+\ldots+b_n}=L.
$$
\end{lem}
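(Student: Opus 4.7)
The plan is to prove this by a standard Stolz--Cesàro style argument, splitting each partial sum into an initial fixed block plus a controllable tail. First I would fix $\e > 0$ and use the hypothesis $a_n/b_n \to L$ to choose $N = N(\e)$ so that for every $n \geq N$ one has
$$
(L - \e)b_n \leq a_n \leq (L + \e)b_n.
$$
Multiplying by the positive $b_n$ and summing from $N$ to $n$ then gives
$$
(L - \e)\sum_{j=N}^{n} b_j \;\leq\; \sum_{j=N}^{n} a_j \;\leq\; (L + \e)\sum_{j=N}^{n} b_j.
$$

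Next I would write $A_n := a_1 + \cdots + a_n$ and $B_n := b_1 + \cdots + b_n$, and set $C := A_{N-1}$ and $D := B_{N-1}$ (fixed constants once $\e$ is chosen). Dividing through, the above becomes
$$
\frac{C}{B_n} + (L - \e)\cdot\frac{B_n - D}{B_n} \;\leq\; \frac{A_n}{B_n} \;\leq\; \frac{C}{B_n} + (L + \e)\cdot\frac{B_n - D}{B_n}.
$$
The key point is then the divergence hypothesis $\sum b_n = \infty$, which gives $B_n \to \infty$. Hence $C/B_n \to 0$ and $(B_n - D)/B_n \to 1$, so for all sufficiently large $n$ the left side is at least $L - 2\e$ and the right side is at most $L + 2\e$. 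Since $\e$ was arbitrary, this yields $A_n/B_n \to L$.

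There is no real obstacle here: positivity of the $b_n$ guarantees the inequalities are preserved under summation and that $B_n$ is monotonically increasing, and divergence of $\sum b_n$ handles the initial-segment error. The only minor care needed is to keep the fixed quantities $C$, $D$, and $N$ pinned once $\e$ is fixed, and to let $n \to \infty$ before letting $\e \to 0$.
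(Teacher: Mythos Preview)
Your argument is correct; this is exactly the standard Stolz--Ces\`aro/averaging proof. The paper itself does not give a proof of this lemma at all---it simply labels it an ``elementary lemma'' and states it without proof---so there is nothing to compare against beyond noting that your write-up supplies precisely the routine verification the authors chose to omit.
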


\begin{lem}\labl{tcorr2}
Let $L$ be a real number and $(a_n)_{n=1}^\infty$ and $(b_n)_{n=1}^\infty$ be two sequences of positive integers.  Let $(c_t)_{t=1}^\infty$ be an increasing sequence of positive integers.  Set $A_t=\sum_{n=c_t}^{c_{t+1}-1} a_n$ and $B_t=\sum_{n=c_t}^{c_{t+1}-1} b_n$.  If $\lim_{t \to \infty} \frac {A_1+A_2+\cdots+A_t} {B_1+B_2+\cdots+B_t}=L,\  \sum_{t=1}^\infty A_t=\sum_{t=1}^\infty B_t=\infty,$ and $\lim_{t \to \infty} \frac {A_{t+1}} {A_1+A_2+\cdots+A_t}=\lim_{t \to \infty} \frac {B_{t+1}} {B_1+B_2+\cdots+B_t}=0$, then
$$
\lim_{n \to \infty} \frac {a_1+a_2+\ldots+a_n} {b_1+b_2+\ldots+b_n}=L.
$$
\end{lem}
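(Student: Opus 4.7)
The plan is to view this as a Stolz--Ces\`{a}ro-type passage from block sums to full sums. Set $s_n := a_1+\cdots+a_n$, $S_n := b_1+\cdots+b_n$, $T_t := A_1+\cdots+A_t$, and $U_t := B_1+\cdots+B_t$. Without loss of generality I may assume $c_1 = 1$: any finitely many indices below $c_1$ contribute bounded additive constants to $s_n$ and $S_n$, which wash out in the limit because $T_t,U_t \to \infty$ by the divergence hypotheses.

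For each $n$, let $t=t(n)$ be the unique index with $c_t \leq n < c_{t+1}$. Since $(c_t)$ is a strictly increasing sequence in $\mathbb{N}$, $t(n) \to \infty$ as $n \to \infty$. Positivity of the $a_k$ and $b_k$ yields the bracketings
$$
T_{t-1} \le s_n \le T_t, \qquad U_{t-1} \le S_n \le U_t,
$$
and therefore
$$
\frac{T_{t-1}}{U_t} \;\le\; \frac{s_n}{S_n} \;\le\; \frac{T_t}{U_{t-1}}.
$$

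It remains to show that both outer quantities tend to $L$. The hypothesis $A_{t+1}/T_t \to 0$ reindexes to $A_s/T_{s-1} \to 0$, so $T_{t-1}/T_t = 1/(1 + A_t/T_{t-1}) \to 1$; the analogous statement holds for $U$. Combined with the assumed convergence $T_t/U_t \to L$, the factorizations
$$
\frac{T_t}{U_{t-1}} \;=\; \frac{T_t}{U_t}\cdot\frac{U_t}{U_{t-1}} \quad\text{and}\quad \frac{T_{t-1}}{U_t} \;=\; \frac{T_{t-1}}{T_t}\cdot\frac{T_t}{U_t}
$$
both tend to $L$. The squeeze theorem then delivers $s_n/S_n \to L$.

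I do not anticipate a substantive obstacle. The only care needed is in the bracketing step, which is immediate from positivity of the terms, and in the treatment of the initial segment $[1,c_1-1]$, which is negligible because $T_t,U_t \to \infty$. The smallness hypotheses $A_{t+1}/T_t\to0$ and $B_{t+1}/U_t\to0$ are invoked exactly once, and they are precisely what is needed to close the potential gap between the lower and upper ends of the sandwich.
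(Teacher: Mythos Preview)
Your proof is correct. The paper states this lemma as elementary and gives no proof of it, so there is nothing to compare against; your sandwich argument using $T_{t-1}\le s_n\le T_t$ and $U_{t-1}\le S_n\le U_t$, together with $T_t/T_{t-1}\to 1$ and $U_t/U_{t-1}\to 1$, is exactly the natural argument and goes through without issue (with the trivial caveat that the upper bound $T_t/U_{t-1}$ is only defined once $t\ge 2$, which is fine since $t(n)\to\infty$).
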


We may now proceed with the proof of \reft{normalorders}.

\begin{proof}[Proof of \reft{normalorders}]
Since $\al_j | b_i$, $Q$ is a basic sequence.
Let $L_i$ be definitioned as in \reft{thm3.1} and let $i=i(n)$ be the integer that satisfies $L_{i-1} < n \leq L_i$. Let $k \in \mathbb{N}$ and set 
$$
P_{j,k}=\sum_{v=0}^{d-1} \frac {1} {p_{jd+v+1}p_{jd+v+2}\cdots p_{jd+v+k}} \hbox{ and }Q_{j,k}=\sum_{v=0}^{d-1} \frac {1} {q_{jd+v+1}q_{jd+v+2}\cdots q_{jd+v+k}}.
$$
Note that   for large enough $j$, we have 
$p_n \in \{b_{i(jd+1)},b_{i(jd+1)+1}\}$ for $jd+1 \leq n \leq (j+1)d+k-1$.  Thus, since $b_i$ is non-decreasing
$\frac {d} {b_{i+1}^k} \leq P_{j,k} \leq \frac {d} {b_i^k}$.
Similarly,
$$
\frac {\sum_{j=0}^{t-k} c_jc_{j+1}\cdots c_{j+k-1}} {b_i^k} < Q_{j,k} < \frac {\sum_{j=0}^{t-k} c_jc_{j+1}\cdots c_{j+k-1}} {b_i^k}+\frac {d-(t-k+1)} {2^{jd+1} b_i^{k-1}}\cdot \max_{0 \leq r \leq t-1} c_r^{k-1},
$$
so
\begin{align}\labeq{normalorder1}
\frac {Q_{j,k}} {P_{j,k}} 
&< \frac {\frac {\sum_{j=0}^{t-k} c_jc_{j+1}\cdots c_{j+k-1}} {b_i^k}+\frac {d-(t-k+1)} {2^{jd+1} b_i^{k-1}}\cdot \max_{0 \leq r \leq t-1} c_r^{k-1}} {\frac {d} {b_{i+1}^k}}\\
&=\left(\frac {b_{i+1}} {b_i}\right)^k \cdot \frac {\sum_{j=0}^{t-k} c_jc_{j+1}\cdots c_{j+k-1}} {d}+\frac {d-(t-k+1)} {d} \cdot \left(\frac {b_{i+1}} {b_i} \right)^{k-1} \cdot \frac {b_{i+1}} {2^{jd+1}}\cdot \max_{0 \leq r \leq t-1} c_r^{k-1}
\end{align}
and
\begin{equation}\labeq{normalorder2}
\frac {Q_{j,k}} {P_{j,k}}> \frac {\frac {\sum_{j=0}^{t-k} c_jc_{j+1}\cdots c_{j+k-1}} {b_i^k}} {\frac {d} {b_i^k}}
=\frac {\sum_{j=0}^{t-k} c_jc_{j+1}\cdots c_{j+k-1}} {d}.
\end{equation}
However, $\frac {b_{i+1}} {b_i} \to 1$ and \refeq{good1} holds, so $\lim_{j \to \infty} \frac {b_{i(jd+1)+1}} {2^{jd+1}} \to 0$.  Therefore,
\begin{equation}\labeq{mainpsithing2}
\lim_{j \to \infty} \frac {Q_{j,k}} {P_{j,k}}=\frac {\sum_{j=0}^{t-k} c_jc_{j+1}\cdots c_{j+k-1}} {d},
\end{equation}
by \refeq{normalorder1} and \refeq{normalorder2}.  
Using \refl{tcorr}, \refl{tcorr2}, \refeq{mainpsithing2}, the fact that $\lim_{j \to \infty} \frac {Q_{j,k}} {P_{j,k}}$ exists, and that $P$ and $Q$ are $k$-divergent, we have that
\begin{equation}\labeq{limssame}
\lim_{n \to \infty} \frac {\qnk} {\pnk}= \lim_{j \to \infty} \frac {Q_{j,k}} {P_{j,k}}=\frac {\sum_{j=0}^{t-k} c_jc_{j+1}\cdots c_{j+k-1}} {d}.
\end{equation}
Let $k \in \{1,\cdots,t\}$ and let $B$ be any block of length $k$.  By \reft{mainpsi} 
\begin{equation}\labeq{mainpsithing}
N_n^Q(B,\ppqewx)=N_n^P(B,\eta(W,X))+O(1).
\end{equation}
Since $\eta(W,X) \in \Nk{Q}{k}$ by \reft{thm3.1}, we know that $\lim_{n \to \infty} \frac {N_n^P(B,\eta(W,X))} {\pnk}=1$.  Thus, by \refeq{limssame} and \refeq{mainpsithing}
$$
\lim_{n \to \infty} \frac {N_n^Q(B,\ppqewx)} {\qnk}=\lim_{n \to\infty} \frac {N_n^P(B,\eta(W,X))+O(1)} {\pnk} \cdot \frac {\pnk} {\qnk}=\frac {d} {\sum_{j=0}^{t-k} c_jc_{j+1}\cdots c_{j+k-1}},
$$
which is equal to $1$ if $k \in A$ and not equal to $1$ if $k \in B$.  Thus, $\ppqewx \in \Nk{Q}{k}$ if and only if $k \in A$.
\end{proof}

\begin{proof}[Proof of \reft{APnormalorders}]
The proof follows along similar, but slightly more complex lines than the proof of \reft{normalorders}, so we omit the details.  It should be noted that \reft{mainpsi} is used to prove \refeq{APIorders1} and \refeq{APIorders2} and \refc{mainpsi2} is used to prove \refeq{APIIorders1} and \refeq{APIIorders2}.
\end{proof}

\subsection{Proof of \reft{ap2notnormal}}\labs{ap2notnormal}

We note that with $m=k, t=2k^2, d=2k^2(k+1), c_0=c_1=\cdots=c_{k-1}=c_{k^2}=c_{k^2+1}=\cdots=c_{k^2+k-1}=2k$, and $c_n=1$ for all other $n$, we have $ c_{r+jm}c_{r+(j+1)m}\cdots c_{r+(j+k-1)m}=2k$ for all $j, m$, and $r$.  Also,
$$
\floor{\frac {t-r-1} {m}}=\floor{\frac {2k^2-r-1} {k}}=\floor{\frac {2k^2-(k-1)-1} {k}}=2k-1 \hbox{ for all }r \in [0,k-1].
$$
Thus, there are $\left(\floor{\frac {t-r-1} {m}}-k+1\right)-0+1=2k-1-k+1+1=k+1$ terms in the sum in \refeq{APIIorders1}, each of which is equal to $2k$.  Therefore,
$$
\sum_{j=0}^{\floor{\frac {t-r-1} {m}}-k+1} c_{r+jm}c_{r+(j+1)m}\cdots c_{r+(j+k-1)m}=(k+1) \cdot 2k=\frac {2k^2(k+1)}{k}=d/m,
$$
so \refeq{APIIorders1} holds for all $r \in [0,k-1]$ and $\psi_{R_t,Q}(\zeta_t) \in \bigcap_{r=0}^{k-1} \Nkt{Q}{k,k,r}{2}=\Nkt{Q}{k}{2}$.

To show that $\psi_{R_t,Q}(\zeta_t) \notin \Nk{Q}{k}$, we first consider the case where $k=2$.  Here,
\begin{align*}
\sum_{j=0}^{t-k} c_jc_{j+1}\cdots c_{j+k-1}=\sum_{j=0}^6 c_j c_{j+1}&=c_0c_1+c_1c_2+c_2c_3+c_3c_4+c_4c_5+c_5c_6+c_6c_7\\
&=16+4+1+4+16+4+1=46 \neq 24=2 \cdot 2^2 (2+1)=d.
\end{align*}
It is easy to show that $(2k)^k>2k^2(k+1)$ for all $k \geq 3$.
Hence, if $k \geq 3$, then
$$
\sum_{j=0}^{t-k} c_jc_{j+1}\cdots c_{j+k-1}>c_0c_1\cdots c_{k-1}=(2k)^k>2k^2(k+1)=d.
$$
In either case, \refeq{normalorders2} holds, so $\psi_{R_t,Q}(\zeta_t) \notin \Nk{Q}{k}$.

\section{Further questions}\labs{conjectures}
Several conjectures are presented in this section.  We will intentionally omit many details in this section as the computations supporting the likelihood of some of these assertions would make this section disproportionately long.

It is likely that there exist much stranger examples of numbers than those that can be constructed with \reft{normalorders} and \reft{APnormalorders}.  

\begin{conjecture}\labj{APabnormal}
There exists a basic sequence $Q$ that is infinite in limit and fully divergent of type I  and a real number $x$ is that $Q$-normal but ``AP $Q$-abnormal''.  That is,
$$
x \in \N{Q} \backslash \bigcup_{k=2}^\infty \bigcup_{r=0}^{k-1} \NAPIkmr{Q}{k}{k}{r}.
$$
\end{conjecture}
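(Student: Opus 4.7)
The plan is to construct $Q$ and $x$ by a staged, nested version of the machinery in \reft{normalorders} and \reft{APnormalorders}. Fix an enumeration $(k_1,r_1),(k_2,r_2),\ldots$ of all pairs with $k_i\geq 2$ and $0\leq r_i<k_i$. At stage $i$ we concatenate a long finite block built from a BFF-segment together with local perturbations of a factor $\Xi(\cdot,(c_0^{(i)},\ldots,c_{t_i-1}^{(i)}),d_i)$ tuned so that (a) the Diophantine identities \refeq{normalorders1} hold for every $k\in\{1,\ldots,t_i\}$, guaranteeing $Q$-normality of all orders up to $t_i$ at that scale, while (b) the type~I AP sum in \refeq{APIorders2} with $(k,m,r)=(k_i,k_i,r_i)$ is forced off the value $d_i/k_i$. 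Letting stage $i$ occupy a window $[N_i,N_{i+1})$ of carefully chosen length, we use \reft{APmain} (or its proof method) to extract the real number $x$ as $\eta(D,X)$ of the resulting APBFF, with $Q=\Gamma(D,X)$, and \reft{mainpsi} together with \refc{mainpsi2} to transfer the counting estimates to the image.

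To make the overall number $Q$-normal we would arrange, as in the proof of \reft{normalorders}, for the global Cesàro averages $N_n^Q(B,x)/Q_n^{(k)}$ to tend to $1$, by ensuring that each stage is itself ``$Q$-normal of all orders $\leq t_i$'' in the sense of the Diophantine system, and that $t_i\to\infty$. To simultaneously destroy AP $Q$-normality at $(k_i,k_i,r_i)$, we need the contribution to $N_{n,k_i,r_i}^Q(B,x)$ during the $i$th stage to be large enough, relative to $Q_{n,k_i,r_i}^{(k_i)}$, to keep the ratio bounded away from $1$ infinitely often. This requires a careful choice of $N_{i+1}$: we want
\[
\frac{Q_{N_{i+1},k_i,r_i}^{(k_i)}-Q_{N_i,k_i,r_i}^{(k_i)}}{Q_{N_{i+1},k_i,r_i}^{(k_i)}}\ \not\to\ 0,
\]
which can be forced by making each stage longer than the sum of all previous stages (a standard ``fat tail'' choice).

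The main obstacle, and what I expect to be genuinely hard, is the interference problem: a window designed to sabotage the $(k_i,r_i)$ AP ratio must simultaneously (i) not destroy $Q$-normality of any finite order, and (ii) not accidentally restore AP normality at some previously sabotaged pair $(k_j,r_j)$ with $j<i$ by adding balanced contributions along its AP. The first point is handled by insisting that within stage $i$ the full Diophantine system \refeq{normalorders1} be satisfied for every $k\leq t_i$, which by \reft{normalorders} fixes the global frequencies; so the subtle issue is whether one can simultaneously solve \refeq{normalorders1} for all $k\leq t_i$ \emph{and} violate \refeq{APIorders2} for the specific pair $(k_i,k_i,r_i)$, with integer multiplicity compatible with $\alpha_j\mid p_n$. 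A dimension count suggests solutions exist generically, since the full-normality system imposes $t_i$ equations on $t_i+1$ unknowns $(c_0,\ldots,c_{t_i-1},d_i)$, leaving a one-parameter family on which the single inequality \refeq{APIorders2} generically cuts out a dense open set; making this rigorous, and compatible with the integrality constraints, is the central technical task. Interference at earlier stages should then be controlled by choosing stage lengths divisible by $\mathrm{lcm}(k_1,\ldots,k_{i-1})$, so that previously sabotaged AP ratios remain on their target off-values by the fat-tail choice above.

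An alternative route, if the integrality-constrained Diophantine step proves intractable, is a probabilistic construction: choose the digits $E_n$ of $x$ and the entries $q_n$ of $Q$ stage-by-stage from carefully designed distributions concentrated on configurations realizing the desired bad frequencies along each target AP, and show by a Borel--Cantelli argument that, with positive probability, the resulting pair $(x,Q)$ lies in $\N{Q}\setminus\bigcup_{k\geq 2}\bigcup_{r=0}^{k-1}\NAPIkmr{Q}{k}{k}{r}$. This would sacrifice the computability that \reft{thm3.1} and \reft{APmain} afford but sidestep the combinatorial interference argument entirely.
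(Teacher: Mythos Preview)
This statement is \emph{Conjecture}~\ref{conj:APabnormal} in the paper and is explicitly left open; the paper gives no proof, only heuristic evidence and a suggested route via Conjecture~\ref{conj:system}. The paper's proposed approach is not staged: rather, for each $t$ one seeks a \emph{single} solution $(c_0,\ldots,c_{t-1})$ (in the region $[t,t+1]\times[1+\tfrac{1}{2t},1+\tfrac{1}{t-1}]^{t-1}$, with $d=2t$) to the full normality system \refeq{normalorders1} for all $k\leq t$, and then argues that a strengthened form of \reft{normalorders}/\reft{APnormalorders} would yield a member of $\N{Q}$ for which \emph{every} AP sum in \refeq{APIorders2} is automatically off its target value. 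So both your route and the paper's reduce to the same Diophantine obstruction you isolate, but the paper packages it as one static system rather than an infinite sequence of staged ones.

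Your staged fat-tail construction has a genuine gap beyond the one you flag. With fat tails, the AP ratio $N^Q_{n,k,r}(B,x)/Q^{(k)}_{n,k,r}$ at the end of stage $i$ is essentially determined by stage $i$ alone. Hence, for $x\notin\NAPIkmr{Q}{k_j}{k_j}{r_j}$ to persist, it is not enough to sabotage $(k_j,r_j)$ once at stage $j$: \emph{every} subsequent stage $i>j$ must also violate the AP identity for $(k_j,r_j)$, otherwise the ratio returns to $1$ along the subsequence $n=N_{i+1}$. Thus at stage $i$ you must simultaneously satisfy the $t_i$ normality equations \emph{and} violate all $i$ previously enumerated AP equalities, not just the current one. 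Your dimension count (one free parameter, one open inequality) does not cover this: you need the one-parameter solution curve to avoid an ever-growing finite union of hypersurfaces, with the integrality constraints $\alpha_j\mid p_n$ on top. This is precisely why the paper isolates the solvability question as a separate conjecture rather than claiming a proof. Your probabilistic alternative is reasonable as a fallback, but note that it too must control infinitely many AP conditions simultaneously, and no Borel--Cantelli argument for this is supplied.
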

We may think of this number described in \refj{APabnormal} as being a normal number whose digits along {\it every} arithmetic progression do not behave in a way that one would expect from a normal number.  We will outline the evidence for \refj{APabnormal} and provide an even stronger conjecture.

\begin{definition}
\footnote{
Note that in base~$b$, where $q_n=b$ for all $n$,
 the corresponding notions of $Q$-normality, $Q$-ratio normality, and $Q$-distribution normality are equivalent. 
}
Let $Q \in \NN$ and put $ T_{Q,n}(x)=\left(\prod_{j=1}^n q_j\right) x \pmod{1}$.  A real number~$x$ is {\it $Q$-distribution normal} if
the sequence $(T_{Q,n}(x))_{n=0}^\infty$ is uniformly distributed mod $1$.  Let $\DNQ$ be the set of $Q$-distribution normal numbers.
\end{definition}

\begin{figure}
\begin{tikzpicture}[>=stealth',shorten >=1pt,node distance=3.8cm,on grid,initial/.style    ={}]
  \node[state]          (NQ)                        {$\mathsmaller{\NQ}$};
  \node[state]          (RNQ) [left =of NQ]    {$\mathsmaller{\RNQ}$};
  \node[state]          (NQRNQ) [above right =of NQ]    {$\mathsmaller{\NQ \cap \RNQ}$};
  \node[state]          (RNQDNQ) [above left=of RNQ]    {$\mathsmaller{\RNQ \cap \DNQ}$};
  \node[state]          (NQDNQ) [above left =of NQ]    {$\mathsmaller{\NQ \cap \DNQ}$};
  \node[state]          (NQRNQDNQ) [above right =of NQDNQ]    {$\mathsmaller{\NQ \cap \RNQ \cap \DNQ}$};
  \node[state]          (DNQ) [above right=of RNQDNQ]    {$\mathsmaller{\DNQ}$};
\tikzset{mystyle/.style={->,double=black}} 
\tikzset{every node/.style={fill=white}} 
\path (RNQDNQ)     edge [mystyle]    (RNQ)
      (RNQDNQ)     edge [mystyle]     (DNQ)
      (NQ)     edge [mystyle]     (RNQ)
      (NQDNQ)     edge [mystyle]     (RNQDNQ)
      (NQDNQ)     edge [mystyle]     (NQ);
\tikzset{mystyle/.style={<->,double=black}}
\path (NQRNQDNQ)     edge [mystyle]    (NQDNQ)
	(NQ)     edge [mystyle]    (NQRNQ);
\end{tikzpicture}
\caption{}
\labf{figure1}
\end{figure}

We refer to the directed graph in \reff{figure1} for the complete containment relationships between these notions when $Q$ is infinite in limit and fully divergent.  The vertices are labeled with all possible intersections of one, two, or three choices of the sets $\NQ$, $\RNQ$, and $\DNQ$.  The set labeled on vertex $A$ is a subset of the set labeled on vertex $B$ if and only if there is a directed path from vertex $A$ to vertex $B$.\footnote{The underlying undirected graph in \reff{figure1} has an isomorphic copy of complete bipartite graph $K_{3,3}$ as a subgraph.  Thus, it is not planar and the directed graph that connects two vertices if and only if there is a containment relation between the two labels is more difficult to read.}  For example, $\NQ \cap \DNQ \subset \RNQ$, so all numbers that are $Q$-normal and $Q$-distribution normal are also $Q$-ratio normal.   The relations described in \reff{figure1} are best derived through use of \reft{mainpsi}.
More information can be found in \cite{ppq1}. 

Let $\dimh{S}$ be the Hausdorff dimension of a set $S \subseteq \mathbb{R}$.  The following is a special case of a much more general theorem proven in \cite{AireyMance1}.
\begin{thrm}\labt{APDN}
Suppose that $Q$ is infinite in limit and put
$$
A=\left\{x \in \mathbb{R} : (T_{Q,mn+r}(x))_{n=1}^\infty \hbox{ is not uniformly distributed mod }1 \forall m \in \mathbb{N}_2, r \in [0,m-1]\right\}.
$$
Then $\dimh{\DN{Q} \cap A}=1$.
\end{thrm}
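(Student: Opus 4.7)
The plan is to prove $\dimh{\DN{Q} \cap A} = 1$ by constructing a Cantor-like subset $\mathcal{C} \subset \DN{Q} \cap A$ of Hausdorff dimension one. I would first enumerate all pairs $\{(m_k, r_k)\}_{k \geq 1}$ with $m_k \geq 2$ and $0 \leq r_k < m_k$ so that each pair appears infinitely often under a controlled recurrence scheme, and choose a rapidly growing sequence $0 = N_0 < N_1 < N_2 < \cdots$ of stage boundaries (say, $N_{k+1}$ larger than any fixed tower in $N_k$). The key observation is that $T_{Q, n-1}(x) \in [a/q_n, (a+1)/q_n)$ whenever $E_n = a$, so restricting digits at specified positions directly restricts where values of the shift sequence land in $[0, 1)$.

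Within stage $k$, for each position $n \in (N_{k-1}, N_k]$ with $n \equiv r_k + 1 \pmod{m_k}$, I would restrict $E_n$ to a prescribed subset $J_{m_k, r_k, n} \subset \{0, \ldots, q_n - 1\}$ corresponding to a bias subinterval $I_{m_k, r_k} \subset [0, 1)$; all other digits in the block are entirely free. The family $\{I_{m, r}\}$ would be chosen so that (a) each $I_{m, r}$ has a definite positive measure strictly less than $1$ (to allow detecting non-uniformity along AP $(m, r)$), and (b) for each modulus $m$, the intervals $\{I_{m, r}\}_{r = 0}^{m - 1}$ balance across $r$ in the sense that the induced biases cancel when averaged. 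A natural candidate is $I_{m, r} = [r/m, (r+1)/m)$, which partitions $[0, 1)$ for each $m$. To resolve the overlap issue when a single position $n$ belongs to multiple arithmetic progressions, exactly one pair $(m_k, r_k)$ is declared the controlling bias at $n$---namely, the pair active in the current stage.

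For $x \in A$: at the end of each stage where pair $(m, r)$ is visited, the AP $(m, r)$'s shift sequence $(T_{Q, mn + r}(x))_n$ has its most recent $\sim (N_k - N_{k-1})/m$ terms forced into $I_{m, r}$, producing $\limsup_N |\{n \leq N : T_{Q, mn + r}(x) \in I_{m, r}\}|/(N/m) > |I_{m, r}|$, contradicting uniform distribution along the AP. For $x \in \DN{Q}$: by the balanced choice of biases together with the rapid growth of stage lengths, an explicit Weyl-type discrepancy estimate (summed over the contribution of each stage) shows that the overall sequence $(T_{Q, n}(x))_{n \geq 0}$ is uniformly distributed mod $1$. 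For $\dimh{\mathcal{C}} = 1$: since each position $n$ admits at least a constant fraction of the $q_n$ digit choices and $q_n \to \infty$, the mass distribution principle applied to the natural uniform product measure on $\mathcal{C}$ yields $\dimh{\mathcal{C}} \geq \liminf_n \log |J_n| / \log q_n = 1$.

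The main obstacle is the simultaneous realization of these constraints. Overall $Q$-distribution normality requires that the forcings, aggregated across all stages and residue classes, do not displace the Lebesgue measure as the limiting distribution of $(T_{Q, n}(x))_n$; yet the failure of uniform distribution along \emph{every} arithmetic progression requires a persistent positive-upper-density bias along every AP. A naive scheme such as forcing digits to zero immediately destroys the overall uniform distribution. The delicate technical challenge is designing the stage structure and the assignment $(m, r) \mapsto I_{m, r}$ so that the AP-level biases do not cancel destructively (which would restore uniform distribution along that AP), while the global average of biases does cancel, preserving membership in $\DN{Q}$.
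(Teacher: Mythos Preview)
The paper does not prove this theorem; it is quoted as a special case of a more general result from \cite{AireyMance1}. So there is no in-paper argument to compare against, and your proposal must stand on its own.

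Your overall architecture---a Cantor-type subset defined by digit restrictions along arithmetic progressions, stage-wise bookkeeping, and dimension via the mass distribution principle---is the right shape. But the specific scheme you describe has a genuine gap, precisely at the point you yourself flag as ``the main obstacle.''

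You enumerate \emph{pairs} $(m_k,r_k)$ and, in stage $k$, restrict $E_n$ only at positions $n\equiv r_k+1\pmod{m_k}$, leaving all other digits free. Under the natural product measure on the resulting Cantor set, the empirical distribution of $T_{Q,n-1}(x)$ over $n$ ranging through stage $k$ is then
\[
\Bigl(1-\frac{1}{m_k}\Bigr)\,\lambda \;+\; \frac{1}{m_k}\cdot m_k\cdot \lambda\big|_{I_{m_k,r_k}},
\]
which has density $1-1/m_k$ off $I_{m_k,r_k}$ and $2-1/m_k$ on it; this is never Lebesgue. With the rapid growth you require ($N_k-N_{k-1}\gg N_{k-1}$), the empirical distribution of $(T_{Q,n}(x))_{n\le N_k}$ is asymptotically that of stage $k$ alone, so $\mu$-a.e.\ point of your Cantor set fails to lie in $\DN{Q}$. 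Your balancing condition (b)---that $\{I_{m,r}\}_{r=0}^{m-1}$ partitions $[0,1)$---would only rescue this if all residue classes modulo $m_k$ were constrained \emph{simultaneously}; in your scheme they are spread across different stages, and rapid growth prevents any cross-stage averaging. Dropping rapid growth is not an option either, since it is exactly what forces the $\limsup$ bias along each AP.

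The repair is to enumerate \emph{moduli} rather than pairs: in stage $k$ fix a single $m_k$ (each $m\ge 2$ occurring as $m_k$ infinitely often) and, for \emph{every} $r\in\{0,\ldots,m_k-1\}$, force $E_n$ at positions $n\equiv r+1\pmod{m_k}$ into the digit-set corresponding to $I_{m_k,r}=[r/m_k,(r+1)/m_k)$. Now every position in the stage carries roughly $q_n/m_k$ admissible digits, and the within-stage distribution of $T_{Q,n-1}(x)$ is the equal mixture of uniform measures on the $I_{m_k,r}$, i.e.\ Lebesgue itself; your $\DN{Q}$ claim then holds $\mu$-a.e. The membership in $A$ and the dimension bound survive: along any $(m,r)$, whenever $m_k=m$ the AP-subsequence sits entirely in $I_{m,r}$ for that stage, so rapid growth gives $\limsup$ frequency $1>1/m$ there; and since $Q$ is infinite in limit one can advance $N_{k-1}$ far enough that $\log m_k/\log q_n\to 0$ for $n$ in stage $k$, whence $\liminf_n \log|J_n|/\log q_n=1$.
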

Thus, the following stronger version of \refj{APabnormal} seems likely to be true.
\begin{conjecture}\labj{APabnormalHD}
If $Q$ is infinite in limit and fully divergent of type I, then
$$
\dimh{\N{Q} \backslash \bigcup_{k=2}^\infty \bigcup_{r=0}^{k-1} \NAPIkmr{Q}{k}{k}{r}}=1.
$$
\end{conjecture}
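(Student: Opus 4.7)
The plan is to mirror the strategy underlying \reft{APDN} from \cite{AireyMance1}, but transplant it from the dynamical setting of orbit distribution to the combinatorial setting of block frequencies, and combine it with the Cantor-style block construction used to prove \reft{APmain}. The key new ingredient is a tree refinement of \reft{APmain} in which, at each level, a positive proportion of positions inside each block $X_i$ is left free, so that the collection of admissible real numbers has full Hausdorff dimension.

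Concretely, given $Q$, I would attach to it an APBFF $D$ such that $\Gamma(D,X) = Q$ (or with $Q$ obtained from $\Gamma(D,X)$ via a controlled application of \reft{mainpsi} followed by an appropriate $\psi_{P,Q}$). Rather than fixing a single $D$-good sequence of blocks $X = (X_i)$, I would let each $X_i$ range over a large family of admissible blocks, organizing the choices into a rooted tree $\mathcal{T}$. Each admissible block $X_i(\omega)$ would be partitioned into three disjoint collections of positions: a \emph{normality core}, following the model of $C_{b_i,i!}$ from \reft{construction}, whose presence guarantees that $\eta(D, X(\omega)) \in \N{Q}$ via \reft{APmain}; an \emph{obstruction layer} that, by diagonal enumeration of the countable collection $\{(k,r) : k \ge 2, 0 \le r \le k-1\}$, forces a fixed block $B_{k,r}$ of length $k$ to appear at a prescribed biased frequency along a sub-progression of density $\delta_k > 0$ inside the positions congruent to $r \pmod k$; and a \emph{free layer}, occupying all the remaining positions, filled freely from $\{0, 1, \ldots, b_i - 1\}$.

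The obstruction layer is engineered so that $N_{n,k,r}^Q(B_{k,r}, x)/Q_{n,k,r}^{(k)}$ is pushed a bounded distance away from $1$ infinitely often, placing $x \notin \NAPIkmr{Q}{k}{k}{r}$, while the vanishing total density of obstruction positions inside each $X_i$ means the overall block frequencies of $B_{k,r}$ still converge to $\lambda_{b_i}(B_{k,r})$, preserving $\eta(D,X(\omega)) \in \N{Q}$. The free layer supplies roughly $b_i^{(1-o(1))|X_i|}$ children at depth $i$ in $\mathcal{T}$. A standard mass distribution principle applied to $\mathcal{T}$ and transferred to $[0,1]$ via the (essentially injective) map $\omega \mapsto \eta(D, X(\omega))$ would then yield a Hausdorff dimension lower bound of $1$; the matching upper bound is trivial since the set under consideration lies in $[0,1]$.

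The main obstacle is the mutual compatibility of the obstruction layers: the perturbation designed to defeat $\NAPIkmr{Q}{k}{k}{r}$ must not accidentally produce an offsetting bias along a different progression $(k',r')$ that returns the ratio there back to $1$. A placement strategy using residues modulo $\lcm(k,k')$ for all pairs $(k',r')$ already processed in the diagonal enumeration should resolve this, but verifying uniform control across all such pairs, while simultaneously satisfying \refeq{good1}--\refeq{good3} and keeping the free-layer density close to $1$, is the delicate part. Making this work will likely require a refined version of \refl{fourthlemma} that tracks cross-progression block frequencies, together with a careful bookkeeping of how the parameters $\epsilon_i, k_i, m_i$ must be coupled to the thinning rates of the various obstruction sub-progressions.
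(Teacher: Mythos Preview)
First, note that the statement you are attempting to prove is listed in the paper as an open conjecture; the authors explicitly write that they are ``not aware of anything that could be used to establish \refj{APabnormalHD}.'' There is therefore no proof in the paper to compare your proposal against, and any argument you give would be new.

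Your plan has a structural gap at the very first step. The conjecture concerns \emph{every} basic sequence $Q$ that is infinite in limit and fully divergent of type~I, whereas the APBFF machinery only manufactures sequences of the special form $\Gamma(D,X)$, which are constant on blocks $(L_{i-1},L_i]$ whose lengths grow super-geometrically by \refeq{good2}. A generic $Q$ (for instance $q_n=n$, or any $Q$ whose terms change at every index) cannot be written as $\Gamma(D,X)$, so the phrase ``attach to $Q$ an APBFF $D$ with $\Gamma(D,X)=Q$'' is unavailable. The fallback you suggest---building everything for an auxiliary $P=\Gamma(D,X)$ and then pushing forward by $\psi_{P,Q}$---does not rescue the argument: by \reft{mainpsi} the map $\psi_{P,Q}$ preserves block counts, $N^Q_n(B,\cdot)=N^P_n(B,\cdot)+O(1)$, but membership in $\N{Q}$ and in $\NAPIkmr{Q}{k}{k}{r}$ is governed by the denominators $Q_n^{(k)}$ and $Q_{n,k,r}^{(k)}$, not by $P_n^{(k)}$ and $P_{n,k,r}^{(k)}$. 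To land in $\N{Q}$ you would need $P_n^{(k)}/Q_n^{(k)}\to 1$ for every $k$, and to stay out of every $\NAPIkmr{Q}{k}{k}{r}$ you would simultaneously need detailed control of all the ratios $P_{n,k,r}^{(k)}/Q_{n,k,r}^{(k)}$; for arbitrary $Q$ there is no reason any piecewise-constant $P$ can meet these constraints, and indeed \reft{normalorders} is precisely an exploitation of such ratios failing to equal~$1$. On top of this, $\psi_{P,Q}$ is a singular (fractal) map, and you give no argument that it preserves the Hausdorff dimension of the tree image.

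It is worth noting that even the much weaker \refj{APabnormal}, which asks only for the \emph{existence} of one pair $(Q,x)$ with $x\in\N{Q}\setminus\bigcup_{k,r}\NAPIkmr{Q}{k}{k}{r}$, is left open in the paper and is there reduced to the unsolved algebraic system of \refj{system}. Your tree/obstruction-layer idea, if it can be made rigorous, would at best address that existence question for a specially constructed $Q$; it does not touch the universal-$Q$ assertion of \refj{APabnormalHD}.
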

While there is a substantial amount of machinery that may be used to prove statements like \reft{APDN}, neither of the authors is aware of anything that could be used to establish \refj{APabnormalHD}.  However,  \refj{APabnormal} may lend itself to more immediate analysis.  
\begin{conjecture}\labj{system}
\footnote{
Ryan Greene has verified \refj{system} by computer up to $t=100$ when $\vec{\epsilon}=(0,0,\cdots,0)$.  It is plausible that small pertubations of the right hand side of this system will still allow for solutions in this critical region.}
Let $t  \in \mathbb{N}_3$ and let $\vec{\epsilon}=(\epsilon_1,\cdots,\epsilon_{t}) \in \mathbb{R}^t$.  Then the nonlinear system of equations
\begin{align*}
c_0+c_1+c_2+c_3+\cdots+c_{t-1}&=2t+\epsilon_1\\
c_0c_1+c_1c_2+c_2c_3+c_3c_4+\cdots+c_{t-2}c_{t-1}&=2t+\epsilon_2\\
c_0c_1c_2+c_1c_2c_3+c_2c_3c_4+c_3c_4c_5+\cdots+c_{t-3}c_{t-2}c_{t-1}&=2t+\epsilon_3\\
\cdots\\
c_0c_1\cdots c_{t-1}&=2t+\epsilon_t
\end{align*}
has a solution $(c_0,c_1,\cdots,c_{t-1}) \in [t,t+1] \times \left[1+\frac {1} {2t},1+\frac {1} {t-1}\right]^{t-1}$ whenever $\|\vec{\epsilon}\|$ is sufficiently small.
\end{conjecture}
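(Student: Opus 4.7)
The plan is to reduce \refj{system} to two sub-problems: (i) exhibit an unperturbed base solution $c^{\ast}$ in the interior of $R := [t,t+1] \times [1+\tfrac{1}{2t},\,1+\tfrac{1}{t-1}]^{t-1}$ satisfying $F(c^{\ast}) = (2t,\ldots,2t)$, where $F_k(c) := \sum_{j=0}^{t-k} c_j c_{j+1}\cdots c_{j+k-1}$; and (ii) show that the Jacobian $DF(c^{\ast})$ is invertible.  Once these are in hand, the implicit function theorem supplies, for each $\vec{\epsilon}$ of sufficiently small norm, a unique solution $c(\vec{\epsilon})$ of the perturbed system near $c^{\ast}$, depending smoothly on $\vec{\epsilon}$ and hence still lying in $R$.

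For step (i), the idea is to use the last equation $c_0 c_1 \cdots c_{t-1} = 2t$ to eliminate $c_0 = 2t/(c_1\cdots c_{t-1})$.  If $c_j \in [1+\tfrac{1}{2t}, 1+\tfrac{1}{t-1}]$ this places $c_0$ in a subinterval of $[t,t+1]$; indeed, choosing $c_j \approx 1 + \tfrac{\ln 2}{t-1}$ gives $c_1\cdots c_{t-1} \approx 2$ and $c_0 \approx t$.  The remaining $t-1$ equations $F_k(c) = 2t$, $1 \le k \le t-1$, become a system in $(c_1,\ldots,c_{t-1})$; after the substitution $c_j = 1 + \gamma_j/(t-1)$ with $\gamma_j \in [\tfrac{1}{2}, 1]$, each $F_k(c) - 2t$ can be expanded asymptotically in $1/t$ and the leading-order system in the $\gamma_j$ is linear with a structured and (we expect) nonsingular coefficient matrix.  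One would then construct a continuous self-map of the compact convex set of admissible $\gamma$'s whose fixed points coincide with the solutions, and conclude existence by the Brouwer fixed point theorem.  As a sanity check, the case $t = 3$ is solvable in closed form: one finds $c_1^{\ast} = 3 - \sqrt{3}$ with $c_0^{\ast}$ and $c_2^{\ast}$ the two roots of $x^2 - (3+\sqrt{3})x + (3+\sqrt{3})$, both of which lie in the conjectured box.

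For step (ii), the Jacobian entries are
\[
J_{k,j} = \frac{\partial F_k}{\partial c_j}(c^{\ast}) = \sum_{\substack{0 \le i \le t-k \\ i \le j \le i+k-1}}\ \prod_{\substack{0 \le \ell \le k-1 \\ i+\ell \ne j}} c_{i+\ell}^{\ast}.
\]
Because $c_0^{\ast} \approx t$ dwarfs the other $c_j^{\ast} \approx 1$, column $0$ of $J$ has a distinctly different magnitude from the other columns, and we expect $J$ to admit a block decomposition in which one block becomes diagonally dominant after suitable row and column scaling.  Moreover, the windowed products $c_j c_{j+1} \cdots c_{j+k-1}$ can be read off successive products of $2 \times 2$ transfer-matrix-type blocks built from the $c_j^{\ast}$, and we aim to use this structure to derive a recursive identity for the principal minors of $J$ that forces $|\det J| > 0$.

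The hardest part of this program is step (ii): proving nonvanishing of $\det J$ uniformly in $t$.  Greene's numerical verification up to $t = 100$ is strong supporting evidence, but producing an analytic proof will require finding the right algebraic representation of $\det J$, most likely through the transfer-matrix structure of windowed products.  We expect the bulk of the work to lie in making that factorization explicit and showing that it does not degenerate anywhere in $R$.
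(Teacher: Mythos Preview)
The statement you are attempting to prove is a \emph{conjecture} in the paper, not a theorem: the authors offer no proof, and the only evidence they cite is Greene's numerical verification for $t\le 100$ with $\vec{\epsilon}=0$.  There is therefore no ``paper's own proof'' to compare your proposal against.

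Your outline is a reasonable research program rather than a proof, and you essentially acknowledge this yourself.  The implicit function theorem strategy is sound in principle and would indeed reduce the conjecture to the two sub-problems you identify.  Your closed-form verification for $t=3$ is correct (one checks $c_1^{\ast}=3-\sqrt{3}\approx 1.268$ and $c_0^{\ast},c_2^{\ast}\approx 3.297,\,1.436$, all in the interior of the box).  However, both sub-problems remain genuinely open: for (i) you invoke Brouwer on a map that is only described heuristically (``we expect'' the leading-order linear system to be nonsingular), and for (ii) you concede that proving $\det DF(c^{\ast})\ne 0$ uniformly in $t$ is ``the hardest part'' and that you do not yet have the right algebraic identity.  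These are not minor technicalities to be filled in---they are the entire content of the conjecture.  In particular, the transfer-matrix heuristic for the Jacobian is suggestive but does not by itself yield nonvanishing of the determinant, and diagonal dominance after scaling is asserted rather than established.

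In short: your proposal correctly identifies a plausible line of attack, but it does not close the gap that makes this a conjecture rather than a theorem, and the paper provides nothing further to draw on.
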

If \refj{system} is true, then we can prove \refj{APabnormal} by means of a more sophisticated version of \reft{normalorders} and \reft{APnormalorders} that allows for members of $\N{Q}$ to be constructed.  Furthermore, a proof of \refj{system} would prove the following conjecture.

\begin{conjecture}\labj{finiteorders}
Let $t \geq 2$ be an integer and suppose that $\{1,\cdots,t\}=A \cupdot B$.  There there exists a real number $x$ and basic sequence $Q$ that is $t$-divergent where
$$
x \in \bigcap_{k \in A} \Nk{Q}{k} \backslash \bigcap_{j \in B} \Nk{Q}{j}.
$$
\end{conjecture}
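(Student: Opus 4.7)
The plan is to reduce \refj{finiteorders} to a Diophantine existence problem by means of \reft{normalorders}, in the same spirit as the proofs of \reft{ap2notnormal} and \reft{normalorderknotlower}. Fix a partition $A \cupdot B = \{1, \ldots, t\}$. Choose a BFF $W$ with $b_{i+1}/b_i \to 1$ and a $W$-good sequence $X$, obtained as a variant of \reft{construction} in which each $b_i$ is divisible by a prescribed product of numerators. Put $P = \Gamma(W,X)$ and $Q = \Xi(P, (c_0, \ldots, c_{t-1}), d)$ for positive rationals $c_j = \alpha_j/\beta_j$ in lowest terms and an integer $d \geq t+1$ to be determined. To produce a number in $\bigcap_{k \in A} \Nk{Q}{k} \setminus \bigcup_{j \in B} \Nk{Q}{j}$, which is a subset of the set specified in \refj{finiteorders}, it suffices by \reft{normalorders} to solve the $t \times (t+1)$ system
\[
\sigma_k(c) := \sum_{j=0}^{t-k} c_j c_{j+1} \cdots c_{j+k-1}
\begin{cases} = d & \text{if } k \in A,\\ \neq d & \text{if } k \in B, \end{cases}
\]
in positive rationals subject to the divisibility condition $\alpha_j \mid b_n$ for all $(j,n)$.

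The next step would be to find real-coefficient solutions to the equality part. The constraints $\{\sigma_k = d : k \in A\}$ cut out an algebraic variety $V_A \subset \mathbb{R}_{>0}^t$ of expected dimension $t - |A|$, and on $V_A$ the inequality constraints $\{\sigma_k \neq d : k \in B\}$ define an open and generically dense subset. Following the heuristic behind \refj{system}, one looks for solutions in a neighborhood of the basepoint $c_0 = \cdots = c_{t-1} = 1$, where $\sigma_k(c) = t - k + 1$. After an overall rescaling $c \mapsto \lambda c$ used to bring the values $\sigma_k$ for $k \in A$ to a common integer $d$, there remain $t - |A|$ residual parameters; a Jacobian computation for the map $c \mapsto (\sigma_k(c))_{k \in A}$ at a well-chosen point should show that this map is a submersion, producing a smooth $(t - |A|)$-dimensional family of real solutions by the implicit function theorem. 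The case $\vec{\epsilon} = 0$ of \refj{system}, verified computationally by Ryan Greene up to $t = 100$, furnishes the anchor point for this local analysis.

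The genuine obstacle is passing from real to rational solutions while preserving the divisibility $\alpha_j \mid b_n$. If $V_A$ contained even a single rational point then standard density arguments would make $V_A(\mathbb{Q})$ Zariski-dense in $V_A(\mathbb{R})$, and the open inequality conditions together with divisibility, which one can always secure by rescaling the BFF, would then be routine to arrange. Accordingly, I expect that the most viable approach is to bypass the implicit function theorem and instead construct rational solutions directly by a combinatorial ansatz in the spirit of \reft{ap2notnormal}: assign $c_j = 1$ to all but a small, strategically chosen set of indices and take the remaining $c_j$ to be rational constants that tune each $\sigma_k$ with $k \in A$ to the prescribed value $d$. The main difficulty is the combinatorial design of the positions and values that enforces the desired equalities while automatically violating at least one $\sigma_k = d$ for $k \in B$; the varying degrees $1, 2, \ldots, t$ of the polynomials $\sigma_k$ prevent any uniform one-shot construction, and a recursive strategy that adds equality constraints one at a time, together with a case analysis on the structure of $A$, appears to be necessary.
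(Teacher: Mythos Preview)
The paper does not prove this statement: it is listed as an open conjecture in \refs{conjectures}, so there is no proof to compare your proposal against. What the paper does say is that \refj{finiteorders} would follow from \refj{system} via (a refinement of) \reft{normalorders}; this is precisely the reduction you outline in your first paragraph, so your strategy coincides with the paper's intended route.

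Your proposal, however, is not a proof either, and you are candid about this. The implicit-function-theorem sketch yields only real solutions to $\sigma_k(c)=d$ for $k\in A$, and the passage to rational $c_j$ with controlled numerators is exactly the gap you flag as ``the genuine obstacle''; the combinatorial ansatz you suggest at the end is not carried out. So you and the paper are in the same position: the conjecture stands or falls with the Diophantine existence problem encoded in \reft{normalorders} (equivalently, with \refj{system}), and that problem remains open.

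One small observation that slightly eases the target: the conjecture as stated only requires $x\notin\bigcap_{j\in B}\Nk{Q}{j}$, so it suffices to violate \refeq{normalorders1} for a \emph{single} $j\in B$ rather than for every $j\in B$ as in the full system you wrote down. This does not, however, touch the hard part, which is satisfying the equalities $\sigma_k(c)=d$ for all $k\in A$ in positive rationals.
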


It is possible that careful analysis of the solutions of the system described in \refj{system} could settle the following much more surprising conjecture.

\begin{conjecture}\labj{infiniteorders}
Suppose that $\mathbb{N}=A \cupdot B$ is a partition of the set of natural numbers.  Then there exists a real number $x$ and basic sequence $Q$ that is fully divergent where
$$
x \in \bigcap_{k \in A} \Nk{Q}{k} \backslash \bigcap_{j \in B} \Nk{Q}{j}.
$$
\end{conjecture}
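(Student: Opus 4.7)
My plan is to deduce \refj{infiniteorders} by bootstrapping \reft{normalorders} along the exhaustion $\{1\}\subset\{1,2\}\subset\cdots$ of $\mathbb{N}$ and splicing the finite-stage constructions into a single basic sequence. The first task is the finite version \refj{finiteorders}: for every partition $\{1,\ldots,t\}=A_t\cupdot B_t$ I would produce a tuple $(c_0,\ldots,c_{t-1},d)\in\mathbb{Q}_{>0}^t\times\mathbb{N}_{t+1}$ satisfying
\begin{equation*}
\sum_{j=0}^{t-k}c_jc_{j+1}\cdots c_{j+k-1}=d\text{ for }k\in A_t,\qquad \sum_{j=0}^{t-k}c_jc_{j+1}\cdots c_{j+k-1}\neq d\text{ for }k\in B_t,
\end{equation*}
with numerators admitting a common multiple that we force to divide every $p_n$ by tuning the base sequence of the BFF. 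Treating $d$ as the common value of the rolling-product sums indexed by $A_t$, this becomes $|A_t|-1$ polynomial equations in $t$ positive rational unknowns subject to Zariski-open non-equalities; a dimension count gives a real variety of dimension $t-|A_t|+1\geq 1$, and I would seek rational points on each component meeting the positive orthant either by parametric reductions (for instance setting all but a handful of the $c_j$'s equal to $1$) or by density arguments for each irreducible component.

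With \refj{finiteorders} in hand, the infinite construction proceeds as follows. Fix a BFF $W$ and a $W$-good $X=(X_i)$ and set $P=\Gamma(W,X)$. For each $t\geq 2$ let $\vec{c}^{(t)}=(c_0^{(t)},\ldots,c_{t-1}^{(t)},d^{(t)})$ be a solution tuple for $(A\cap[1,t],B\cap[1,t])$. Choose a very rapidly growing sequence of thresholds $N_1<N_2<\cdots$ and define $Q=(q_n)$ by applying the $\Xi$-recipe of \reft{normalorders} with tuple $\vec{c}^{(t)}$ on positions $n\in[N_t,N_{t+1})$: rescale $p_n$ to $\max(2,(c_j^{(t)})^{-1}p_n)$ when $n\equiv j\pmod{d^{(t)}}$ for $j<t$, and set $q_n=2^np_n$ on the remaining residues, whose contribution to every $Q_n^{(k)}$ is geometrically summable. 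Put $x=\psi_{P,Q}(\eta(W,X))$; by \reft{mainpsi}, $N_n^Q(B,x)=N_n^P(B,\eta(W,X))+O(1)$ for every block $B$. A refinement of \refl{tcorr2} accommodating changing coefficient tuples on successive windows, combined with the ratio analysis driving \reft{normalorders}, then gives
\begin{equation*}
\lim_{n\to\infty}\frac{Q_n^{(k)}}{P_n^{(k)}}=\lim_{t\to\infty}\frac{\sum_{j=0}^{t-k}c_j^{(t)}c_{j+1}^{(t)}\cdots c_{j+k-1}^{(t)}}{d^{(t)}}
\end{equation*}
whenever $N_t/N_{t+1}\to 0$. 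For $k\in A$ this limit equals $1$, so $x\in\Nk{Q}{k}$, while to secure $x\notin\bigcap_{j\in B}\Nk{Q}{j}$ it suffices to arrange that for a single fixed $j_0\in B$ the quantity $p_{j_0}(\vec{c}^{(t)})/d^{(t)}$ stays bounded away from $1$ along a subsequence of stages, a one-parameter condition rather than a constraint at every $j\in B_t$.

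Full divergence of $Q$ in every order follows from full divergence of $P$ together with a uniform upper bound $d^{(t)}=O(t)$, so that a positive density of positions in each stretch carries a merely bounded rescaling of $P$; the rapidly growing residues contribute negligibly. The principal obstacle by a wide margin is the algebraic step: there is at present no uniform existence theorem producing solutions $\vec{c}^{(t)}$ for arbitrary partitions with denominators and divisibilities controlled and with a quantitative lower bound on $|p_{j_0}(\vec{c}^{(t)})/d^{(t)}-1|$ along infinitely many $t$. The computations cited in the footnote to \refj{system} handle only the endpoint $\vec\epsilon=0$ and only up to $t=100$; extending them to arbitrary partitions with quantitative gap control, perhaps by a Newton-polytope or toric analysis of the rolling-product polynomials or by an explicit parametric family adapted to growing $t$, looks like the single most significant hurdle to completing this program.
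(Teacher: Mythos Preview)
There is no proof in the paper to compare your proposal against: the statement is \refj{infiniteorders}, listed in \refs{conjectures} among open problems, and the authors explicitly write that they ``hesitate to suggest that \refj{infiniteorders} is true.'' The paper offers no argument for it beyond the remark that a proof of \refj{system} would settle the weaker \refj{finiteorders}, and that ``careful analysis of the solutions of the system described in \refj{system} could settle'' \refj{infiniteorders}. So you are not reconstructing a known proof; you are sketching a program for an open problem.

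As a program your outline is reasonable and in the spirit of the paper, but it is not a proof, and you say as much in your final paragraph. Two concrete gaps deserve emphasis. First, the finite-stage input you need is itself \refj{finiteorders}, which is open; the dimension count you give for the variety cut out by the equalities does not by itself produce positive rational points, and the paper's own evidence (the footnote to \refj{system}) covers only the all-equalities case $A_t=\{1,\ldots,t\}$ up to $t=100$. Second, the splicing step needs more than $N_t/N_{t+1}\to 0$: the quantities $P_n^{(k)}$ and $Q_n^{(k)}$ are sums of terms of size roughly $b_{i(n)}^{-k}$, not of constant size, so the window $[N_t,N_{t+1})$ need not dominate the cumulative sums in the way your displayed limit presumes. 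You would need the windows chosen relative to the BFF's block structure (for instance along the thresholds $L_i$) and a Toeplitz-type argument tailored to the actual growth of $P_n^{(k)}$, together with uniform control on the divisibility constraints $\alpha_j^{(t)}\mid p_n$ across all stages simultaneously. Your reading of the set-difference is correct, incidentally: as written, $x\notin\bigcap_{j\in B}\Nk{Q}{j}$ only requires failure at a single $j_0\in B$, which does lighten the algebraic burden---but even that lightened system is not known to be solvable for every partition.
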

The authors feel that \refj{APabnormal} and \refj{finiteorders} are almost surely true, but hesitate to suggest that \refj{infiniteorders} is true.  We also wish to state the following extension of \refj{finiteorders} and \refj{infiniteorders}.

\begin{conjecture}\labj{HDorders}
If $Q$ is infinite in limit and $t$ divergent and $\{1,\cdots,t\}=A \cupdot B$, then
$$
\dimh{\bigcap_{k \in A} \Nk{Q}{k} \backslash \bigcap_{j \in B} \Nk{Q}{j}}.
$$ 
If $Q$ is infinite in limit and fully divergent and $\mathbb{N}=C \cupdot D$, then
$$
\dimh{\bigcap_{k \in C} \Nk{Q}{k} \backslash \bigcap_{j \in D} \Nk{Q}{j}}.
$$ 
\end{conjecture}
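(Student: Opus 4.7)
The plan is to derive \refj{HDorders} (interpreting the missing right-hand sides as $=1$) from the existence statements in \refj{finiteorders} and \refj{infiniteorders} by a \emph{sparse fattening} argument, so that the work splits cleanly into (i) producing a single number in the target set (the content of the earlier conjectures) and (ii) inflating that construction into a Cantor-like family of full Hausdorff dimension. In both the finite and infinite partition cases, the key observation is that our constructions via \reft{thm3.1}, \reft{normalorders}, and \reft{APnormalorders} prescribe the Cantor digits at \emph{every} index, but the counting limits that enforce the $\Nk{Q}{k}$-membership and non-membership are insensitive to perturbations on a suitably sparse set of positions.

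First I would select a sparse index set $\Sigma=\{n_1<n_2<\cdots\}\subseteq\mathbb{N}$ along which the digits will be allowed to vary freely. Because $Q$ is infinite in limit, and because $k$-divergence (respectively, full divergence) gives $Q_n^{(k)}\to\infty$ for the relevant $k$, we can arrange that for every $k$ in the finite or countable index family under consideration,
\[
\bigl|\{n\in\Sigma:n\le N\}\bigr|=o\bigl(Q_N^{(k)}\bigr)\qquad (N\to\infty).
\]
Let $x_0$ be a number produced by \reft{normalorders} (or \reft{APnormalorders}, or their hypothetical extensions used in \refj{finiteorders}/\refj{infiniteorders}) that lies in $\bigcap_{k\in A}\Nk{Q}{k}\setminus\bigcap_{j\in B}\Nk{Q}{j}$. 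Define
\[
\mathcal{C}_\Sigma:=\Bigl\{x=E_0.E_1E_2\cdots\ \text{w.r.t.}\ Q:\ E_n=E_n(x_0)\text{ for } n\notin\Sigma,\ E_n\in\{0,\dots,q_n-1\}\text{ for } n\in\Sigma\Bigr\}.
\]
Changing one digit $E_n$ alters the count $N_N^Q(B,x)$ of any block $B$ of length $k$ by at most $2k$, so for any $x\in\mathcal{C}_\Sigma$ and any block $B$ of length $k$,
\[
\bigl|N_N^Q(B,x)-N_N^Q(B,x_0)\bigr|\le 2k\cdot|\Sigma\cap[1,N]|=o\bigl(Q_N^{(k)}\bigr).
\]
Dividing by $Q_N^{(k)}$ shows that the $k$-th order frequency limits of $x$ agree with those of $x_0$, so every $x\in\mathcal{C}_\Sigma$ inherits membership in $\bigcap_{k\in A}\Nk{Q}{k}$ and non-membership in $\bigcap_{j\in B}\Nk{Q}{j}$ (for the latter it suffices that a single offending block $B$ of length $j\in B$ witnesses the failure at $x_0$, and that witness survives the $o(Q_N^{(j)})$ perturbation).

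For the dimension lower bound I would apply the standard mass-distribution/Billingsley principle for Cantor series expansions: the set $\mathcal{C}_\Sigma$, viewed via its free coordinates, supports a product measure $\mu$ that gives each cylinder $[E_1\cdots E_N]$ measure exactly $\prod_{n\in\Sigma,\,n\le N}q_n^{-1}$, while the diameter of that cylinder is $(q_1\cdots q_N)^{-1}$. This yields
\[
\dimh{\mathcal{C}_\Sigma}\ \ge\ \liminf_{N\to\infty}\frac{\sum_{n\in\Sigma,\,n\le N}\log q_n}{\sum_{n=1}^N\log q_n},
\]
and by choosing $\Sigma$ of logarithmic upper density $1$ (which is compatible with the sparsity requirement above because $q_n\to\infty$ lets us thin $\Sigma$ by a factor going to zero while still keeping logarithmic density $1$) we conclude $\dimh{\mathcal{C}_\Sigma}=1$.

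The main obstacle is the infinite-partition case $\mathbb{N}=C\cupdot D$ of \refj{HDorders}. There one must arrange a single sparse set $\Sigma$ whose perturbations are tolerated \emph{simultaneously} by the countable family of frequency limits $\{N_N^Q(B,\cdot)/Q_N^{(k)}\}_{k\in\mathbb{N}}$; the divergence rates of $Q_N^{(k)}$ may depend delicately on $k$, and if they slow down with $k$ then no single sparsity rate for $\Sigma$ works uniformly. I expect that this will force a staircase/diagonal definition of $\Sigma$ whose density along the interval $[1,N_k]$ is tuned to the worst of $Q_{N_k}^{(1)},\dots,Q_{N_k}^{(k)}$, and one then has to check that this choice still has full logarithmic density. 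Beyond that, the argument above is entirely modular: any affirmative answer to \refj{finiteorders} or \refj{infiniteorders} feeds directly into this fattening scheme and produces the corresponding Hausdorff dimension statement.
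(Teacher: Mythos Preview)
The paper contains no proof of \refj{HDorders}; it is explicitly stated as an open conjecture, and the sentence immediately following it reads ``However, neither of the authors anticipate an approach that could be used to settle \refj{HDorders}.'' So there is nothing on the paper's side to compare your argument against.

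Your proposal has a genuine gap at its first step: the quantifiers do not match. \refj{HDorders} is a statement about \emph{every} basic sequence $Q$ that is infinite in limit and $t$-divergent (respectively fully divergent). By contrast, \refj{finiteorders} and \refj{infiniteorders}, as well as \reft{normalorders} and \reft{APnormalorders}, only produce \emph{some} particular $Q$ together with a point $x_0$ in the target set. Your fattening scheme takes such a seed $x_0$ and enlarges it to a full-dimensional Cantor family inside the target set for that same $Q$; it says nothing about an arbitrary $Q$ handed to you by the hypothesis of \refj{HDorders}. The paper gives no result asserting that $\bigcap_{k\in A}\Nk{Q}{k}\setminus\bigcap_{j\in B}\Nk{Q}{j}$ is even nonempty for a general infinite-in-limit, $t$-divergent $Q$, so the reduction you describe does not get off the ground.

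There is also a second, independent gap in the fattening step itself. You require a set $\Sigma$ that simultaneously satisfies $|\Sigma\cap[1,N]|=o(Q_N^{(k)})$ and $\sum_{n\in\Sigma,\,n\le N}\log q_n\big/\sum_{n\le N}\log q_n\to 1$. These can be outright incompatible. Take $q_n=n$: then $Q_N^{(1)}\asymp\log N$ while $\sum_{n\le N}\log q_n\asymp N\log N$; the first condition forces $|\Sigma\cap[1,N]|=o(\log N)$, whence $\sum_{n\in\Sigma,\,n\le N}\log q_n\le |\Sigma\cap[1,N]|\cdot\log N=o((\log N)^2)$, and the logarithmic density of $\Sigma$ is $0$, not $1$. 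Your parenthetical remark that $q_n\to\infty$ reconciles the two requirements is therefore not correct in general, and the claimed lower bound $\dimh{\mathcal{C}_\Sigma}=1$ is unsupported.
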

However, neither of the authors anticipate an approach that could be used to settle \refj{HDorders}.  

\section*{Acknowledgments}

Research of the second author is partially supported by the U.S. NSF grant DMS-0943870.  The authors are indebted to the referee for many valuable suggestions.

\appendix \section{ }

I. Niven and H. S. Zuckerman wrote in \cite{NivenZuckerman}:
\begin{quotation}
Let $R$ be a real number with fractional part $.x_1x_2x_3\cdots$ when written to scale $r$.  Let $N(b,n)$ denote the number of occurrences of the digit $b$ in the first $n$ places.  The number $R$ is said to be {\it simply normal} to scale $r$ if $\lim_{n \to \infty} \frac {N(b,n)} {n}=\frac {1} {r}$  for each of the $r$ possible values of $b$; $R$ is said to be {\it normal} to scale $r$ if all the numbers $R, rR, r^2R,\cdots$ are simply normal to all the scales $r, r^2, r^3, \cdots$.  These definitions, for $r=10$, we introduced by \'{E}mile Borel \cite{BorelNormal}, who stated (p. 261) that ``la propri\'{e}t\'{e} caract\'{e}ristique'' of a normal number is the following: that for any sequence $B$ whatsoever of $v$ specified digits, we have
\begin{equation}\labeq{quote}
\lim_{n \to \infty} \frac {N(B,n)} {n}=\frac {1} {r^v},
\end{equation}
where $N(B,n)$ stands for the number of occurrences of the sequence $B$ in the first $n$ decimal places
\ldots
If the number $R$ has the property \refeq{quote} then any sequence of digits
$
B=b_1b_2\cdots b_v
$
appears with the appropriate frequency, but will the frequencies all be the same for $i=1,2,\cdots,v$ if we count only those occurrences of $B$ such that $b_1$ is an $i,i+v,i+2v,\cdots-th$ digit?  It is the purpose of this note to show that this is so, and thus to prove the equivalence of property \refeq{quote} and the definition of normal number.
\end{quotation}
It is not difficult to see how the equivalent definition of normality introduced in \reft{basebnormalii} may be confused with the notion discussed in \reft{basebnormalii}.

\bibliographystyle{plain}


\end{document}